\newcommand{\C}{\mathbb{C}}
\newcommand{\vf}{\frac{d}{dz}}
\newcommand{\HH}{\mathbb{H}}
\newcommand{\R}{\mathbb{R}}
\newcommand{\Res}{\textrm{Res}}
\newcommand{\CC}{\mathcal{C}}
\newcommand{\eqpt}{equilibrium point}
\newcommand{\multeq}{multiple equilibrium point}
\newcommand{\codim}{{\rm codim}}
\newtheoremstyle{dotless}{}{}{\itshape}{}{\bfseries}{}{ }{}  
\theoremstyle{dotless}
\newtheorem*{thmrank1comp}{Theorem \ref{rank1comp}}
\newtheorem*{thmrank1char}{Theorem \ref{rank1characterization}}
 \journalname{}
\begin{document}

\title{A Characterization of Multiplicity-Preserving Global Bifurcations of Complex Polynomial Vector Fields \thanks{This research was supported by  Fondation Idella, the Marie Curie European Union Research Training Network {\it Conformal Structures and Dynamics} (CODY), the Research Foundation of CUNY PSC-CUNY Cycle 44 (66148-00 44) and Cycle 47 (69510-00 47) Research Awards, the Bronx Community College Foundation Faculty Scholarship Grant 2016, and the Association for Women in Mathematics  Travel Grant  October 2019 Cycle (NSF 1642548). }
}

\titlerunning{Multiplicity-preserving Bifurcations of Vector Fields in $\C$}        

\author{Kealey Dias}


\institute{K. Dias \at
              Department of Mathematics and Computer Science \\
              Bronx Community College of the City University of New York \\
              2155 University Avenue \\
              Bronx, NY 10453 USA
              Tel.: +1-718-289-5030\\
              \email{kealey.dias@bcc.cuny.edu}            \\
              ORCID iD: 0000-0002-4613-6881
}

\date{Received: date / Accepted: date}

\maketitle

\begin{abstract}
For the space of single-variable monic and centered complex polynomial vector fields of arbitrary degree $d$, it is proved that any bifurcation which preserves the multiplicity of equilibrium points can be realized as a composition of a finite number of simpler bifurcations, and  these bifurcations are characterized.
\keywords{Global bifurcations \and Homoclinic orbits \and Holomorphic foliations and vector fields \and Complex ordinary differential equations}
 \subclass{MSC 37C10 
     \and MSC 34C23 
     \and MSC 34M99 
     \and MSC 37C29 
     \and MSC 37F75 
     }
\end{abstract}
\section{Introduction}
\label{intro}
Bifurcations, the qualitative change in dynamics produced by varying parameters, are fundamental to the analysis of any family of dynamical systems but are notoriously difficult to describe in any generality.  This paper makes a significant step towards a complete description of the bifurcations of the global topological structure of the integral curves of the complex polynomial vector fields
\begin{equation}
  \xi_P=P(z)\vf, \quad z\in \C,
\end{equation}
or equivalently, the maximal solutions $\gamma (t,z)$ to the associated autonomous ordinary differential equation (ODE)
\begin{equation}\label{objects}
  \dot{z}=P(z), \quad \gamma(0,z)=z, \quad z \in \mathbb{C}, \ t \in \mathbb{R},
\end{equation}
where $P(z)=z^d+a_{d-2}z^{d-2}+\dots+a_0$ is a monic and centered polynomial of degree $d\geq 2$. Namely, we characterize the \emph{multiplicity-preserving} bifurcations, i.e. bifurcations where the multiplicities of the equilibrium points (the zeros of $P$) are preserved under small perturbation. For significant work on parabolic bifurcations (which do \emph{not} preserve multiplicities) of complex vector fields, see \cite{MRR2004}, \cite{CR2014}, and \cite{Rousseau2015}.\par
Complex polynomial ODEs of the form \eqref{objects} are a subset of the $\mathbb{R}^2$ systems
\begin{align}\label{realode}
  \dot{x}&=u(x,y)\nonumber \\
  \dot{y}&=v(x,y), \quad x,\ y, \ t \in \mathbb{R},
\end{align}
whose global qualitative structure in general remains a fundamental open problem in dynamics.
 Famously, part of  Hilbert's 16th problem inquires to the number and configurations of limit cycles in the plane for each degree $d$ polynomial system in two real variables.
Even though holomorphic vector fields, which lack limit cycles (e.g. \cite{Sve1978,Nee1994}), may seem distant from Hilbert's 16th problem, experts in this area have shown that a significant class of perturbations to study  are non-holomorphic perturbations of holomorphic polynomial vector fields with  centers \cite{AGP2010}, \cite{LS04}, \cite{ALS2005}.   Creating a complete description of bifurcations for complex polynomial vector fields would not only answer a fundamental question about holomorphic systems in their own right, but understanding the holomorphic perturbations may reduce the complexity of the analysis of  non-holomorphic perturbations.   \par
  Complex vector fields are also linked to other areas of mathematics. In particular, the properties of single-variable complex vector fields have been utilized in proving prominent results  in iterated complex dynamics, including the study of parabolic bifurcations (e.g.  \cite{Shi}, \cite{Ben1993}, \cite{Outhesis}, \cite{BT2007}) and in the proof that there exist quadratic polynomial Julia sets of positive Lebesgue measure \cite{BC2005}. Single-variable complex vector fields are  being used to study higher dimensional complex systems \cite{RT2008}, as well as  quadratic and Abelian differentials on Riemann surfaces. The integral curves of holomorphic vector fields on Riemann surfaces foliate  the Riemann surface, and they are similar in structure to the trajectories  of these differentials.  Understanding the bifurcations of complex ODEs may contribute to one of these fields in a way that has not yet been explored.\par
Even when restricting to the study of holomorphic or meromorphic planar systems in their own right, a comprehensive study of the bifurcations remains elusive.  Researchers have made headway by considering lower-degree polynomial systems so that the bifurcation diagram can be visualized \cite{GH,LS04,RT2008}, or describing bifurcations induced by rotations of vector fields \cite{JMRCVV95}.
The aim of this paper is to make to make a significant step towards a comprehensive analysis of all global bifurcations for polynomial vector fields in $\mathbb{C}$ of arbitrary degree by characterizing the bifurcations where the multiplicity of the equilibrium points is preserved.\par
This paper is organized as follows. Sections \ref{prelimssection} and \ref{parmspacesection} review basic properties of complex ODEs and introduce parameter space and bifurcations for the systems under consideration.
Section \ref{deformationssection} describes deformations in rectifying coordinates and proves that these are enough to study the multiplicity-preserving bifurcations.
Rank $k$ bifurcations are defined in Section \ref{rank1compsection}, and the first main theorem of this paper is proved:
\begin{thmrank1comp}
Every multiplicity-preserving bifurcation can be realized as a composition of rank 1 bifurcations.
\end{thmrank1comp}
Section \ref{characterizerank1} characterizes the rank 1 bifurcations to show that they are all essentially the same type. The informal statement of the second main theorem in this paper is:
\begin{thmrank1char}[informally]
Every rank 1, multiplicity-preserving  bifurcation is of the type where a sequence of $n\geq 1$ homoclinic separatrices (satisfying some technical conditions) break, and $n-1$ (specified) homoclinic separatrices form under  perturbation.
\end{thmrank1char}
\section{Preliminaries}
\label{prelimssection}
The complex ODEs \eqref{objects} inherit the general properties of the real planar ODEs \eqref{realode},  including existence and uniqueness of solutions,  dependence on initial conditions, Hartman-Grobman  theorem (linearization), and long-term behavior described by the Poincare-Bendixon theorem. \par
\paragraph{Equilibrium points} Classification of the local dynamics of holomorphic vector fields near an equilibrium point, a zero $\zeta$ of $P$, is  known to depend only on the order of the zero and the  \emph{dynamical residue} $\Res(1/P,\zeta)$ \cite{JAJ,Ben1991,BT76,GGJ2000,OH1966I,OH1966II,OH1966III,Sve1978}. The simple equilibrium points come in three types: sink (attracting), center (rotational), and source (repelling).
There can not be saddle points in the usual sense, since the Cauchy-Riemann equations force the eigenvalues of the Jacobian to be of the form $\lambda=\alpha \pm i \beta$, which can not have real parts with opposite sign.
Near zeros of multiplicity $m>1$, there are $m-1$ attracting and $m-1$ repelling directions and $2(m-1)$ elliptic sectors. See Figure \ref{sependdisk} for an example.
\paragraph{Poles} Another type of singularity for the systems \eqref{objects} is the \emph{pole} at infinity. In more general families, e.g. rational vector fields, there may be more than one pole.  The local dynamics near a pole only depends on the order of the pole. Indeed, the phase portrait of a vector field in the neighborhood of a pole of order $m\in \mathbb{N}$ is shown in Sverdlove \cite{Sve1978} to be topologically equivalent to the phase portrait of the differential equation
\begin{equation}
\label{polenormalform}
\dot{z}=\frac{\lambda}{z^m}, \quad \lambda \in \mathbb{C}^{\ast}, \quad m\in \mathbb{N},
\end{equation}
in a neighborhood of $z=0$, and Garijo, Gasull, and Jarque \cite{GGJ2000} extend the result to conformal (biholomorphic) conjugacy.
Even though holomorphic vector fields can not have saddle points in the usual sense, the local behavior of Equation \eqref{polenormalform} \emph{resembles} that of a saddle point in the following way. It has $m+1$ attracting and $m+1$ repelling straight trajectories meeting at $z=0$, alternating in orientation, and the other trajectories of \eqref{polenormalform} in neighborhood of $z=0$ are hyperbolic in form \cite{Nee1994} (see Figure \ref{sependdisk}).
\paragraph{No limit cycles} A marked difference between holomorphic and general real planar vector fields is that there can not be isolated periodic trajectories, hence no limit cycles, for the former, due to the identity theorem of complex analysis: consider a period $T$ solution $\gamma(t,z_0)$. Since $\gamma(T,z)-z$ is holomorphic in $z$ and $=0$ on an arc of $\gamma$, it is identically $0$ in a neighborhood of $z_0$.\par
\paragraph{Separatrices} The global  structure of the integral curves up to topological equivalence is determined by the \emph{separatrices}, the maximal trajectories that are incoming to  or outgoing from the poles  \cite{DN1975,ALGM1973}. Therefore, unsurprisingly, the separatrices will play a vital role in understanding the bifurcations. The reader may refer to Figure \ref{sependdisk} to guide his or her understanding of the details below which are needed for this paper.
Polynomials of degree $d>2$ have a pole of order $d-2$ at the point at infinity, hence there are $2(d-1)$ separatrices. Separatrices for monic polynomials have asymptotic directions $\ell \pi/(d-1), \ \ell =0,\dots,2d-3$ at infinity. The separatrices come in two types for polynomial vector fields: \emph{landing} separatrices, which have $\alpha$- or $\omega$-limit at an equilibrium point for $t \rightarrow -\infty$ or $t \rightarrow +\infty$ respectively; and \emph{homoclinic} separatrices which join infinity to itself  and are defined for $t$ in a finite interval.
Separatrices $s_{\ell}$ are labelled according to their asymptotic directions. \emph{Outgoing}  separatrices are denoted $s_k$ with odd $k$, \emph{incoming}  separatrices are denoted $s_j$ with even $j$. Homoclinic separatrices are labelled $s_{k,j}$, with odd $k$ and even $j$ corresponding to its outgoing and incoming asymptotic directions at infinity.
Near infinity, the complement of the separatrices has $2(d-1)$ connected components, which define $2(d-1)$ accesses to infinity. The \emph{ends} can informally be thought of as $2(d-1)$ points at infinity, counted as distinct from within each access. Formally, the \emph{ends} $e_{\ell}$, $\ell =0,\dots,2d-3$ are the principal points of the prime ends (in the sense of Carath\'{e}odory) of these regions. The label is chosen such that the access corresponding to $e_{\ell}$ is between the separatrices with labels $\ell -1$ and $\ell$ mod $2(d-1)$ (again, see Figure \ref{sependdisk} for an example).
\begin{figure}[htbp]%
\large
    \begin{pspicture}(0,0)(.97\textwidth,5.5)%
    \centering
    \put(0.5,0){\includegraphics[width=.9\textwidth]{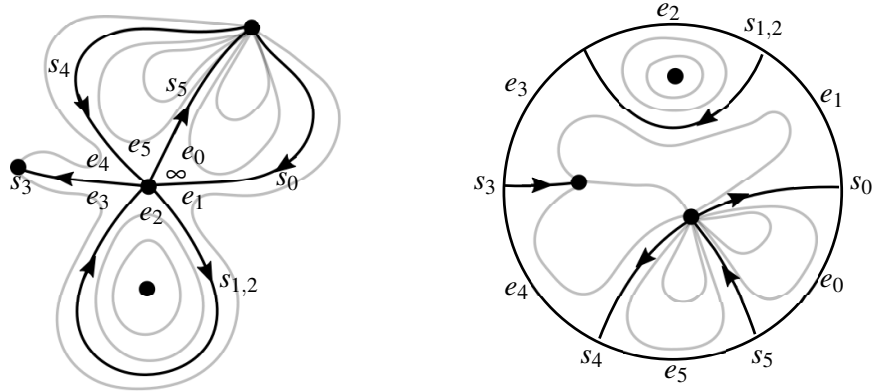}}%
    \put(2.55,2.8){\color[rgb]{0,0,0}\makebox(0,0)[lb]{\smash{{$\infty$}}}}%
    \put(4,2.75){\color[rgb]{0,0,0}\makebox(0,0)[lb]{\smash{{$s_0$}}}}%
    \put(3.25,1.4){\color[rgb]{0,0,0}\makebox(0,0)[lb]{\smash{{$s_{1,2}$}}}}%
    \put(0.5,2.7){\color[rgb]{0,0,0}\makebox(0,0)[lb]{\smash{{$s_3$}}}}%
    \put(1,4.25){\color[rgb]{0,0,0}\makebox(0,0)[lb]{\smash{{$s_4$}}}}%
    \put(2.55,4){\color[rgb]{0,0,0}\makebox(0,0)[lb]{\smash{{$s_5$}}}}%
    \put(11.55,2.7){\color[rgb]{0,0,0}\makebox(0,0)[lb]{\smash{{$s_0$}}}}%
    \put(10.15,4.8){\color[rgb]{0,0,0}\makebox(0,0)[lb]{\smash{{$s_{1,2}$}}}}%
    \put(6.6,2.7){\color[rgb]{0,0,0}\makebox(0,0)[lb]{\smash{{$s_3$}}}}%
    \put(8,0.4){\color[rgb]{0,0,0}\makebox(0,0)[lb]{\smash{{$s_4$}}}}%
    \put(10.25,0.4){\color[rgb]{0,0,0}\makebox(0,0)[lb]{\smash{{$s_5$}}}}%
    \put(2.75,3.1){\color[rgb]{0,0,0}\makebox(0,0)[lb]{\smash{{$e_0$}}}}%
    \put(2.75,2.5){\color[rgb]{0,0,0}\makebox(0,0)[lb]{\smash{{$e_1$}}}}%
    \put(2.2,2.3){\color[rgb]{0,0,0}\makebox(0,0)[lb]{\smash{{$e_2$}}}}%
    \put(1.5,2.5){\color[rgb]{0,0,0}\makebox(0,0)[lb]{\smash{{$e_3$}}}}%
    \put(1.5,3){\color[rgb]{0,0,0}\makebox(0,0)[lb]{\smash{{$e_4$}}}}%
    \put(2.05,3.2){\color[rgb]{0,0,0}\makebox(0,0)[lb]{\smash{{$e_5$}}}}%
    \put(11.15,1.4){\color[rgb]{0,0,0}\makebox(0,0)[lb]{\smash{{$e_0$}}}}%
    \put(11.15,3.85){\color[rgb]{0,0,0}\makebox(0,0)[lb]{\smash{{$e_1$}}}}%
    \put(9,5){\color[rgb]{0,0,0}\makebox(0,0)[lb]{\smash{{$e_2$}}}}%
    \put(7,4){\color[rgb]{0,0,0}\makebox(0,0)[lb]{\smash{{$e_3$}}}}%
    \put(7,1.3){\color[rgb]{0,0,0}\makebox(0,0)[lb]{\smash{{$e_4$}}}}%
    \put(9.1,0.2){\color[rgb]{0,0,0}\makebox(0,0)[lb]{\smash{{$e_5$}}}}%
\end{pspicture}
  \caption{View of the separatrix graph at $\infty$ (left) and in the disk model (right) for a degree $d=4$ monic and centered complex polynomial vector field. There is one center, one sink, and one double equilibrium point. There is a pole of order $d-2=2$ at $\infty$,  so there are $2(d-1)=6$ separatrix directions, labelled by their asymptotic directions $\ell \pi/3, \ \ell =0,\dots,5$. The separatrices $s_0, \ s_3, \ s_4$, and $s_5$ are \emph{landing} separatrices, and $s_{1,2}$ is a   \emph{homoclinic} separatrix, labelled by its two asymptotic directions at $\infty$. There are 6 \emph{ends} $e_{\ell}$, principal points of the prime ends (in the sense of Carath\'{e}odory) defined by the complement of the separatrices in a neighborhood of infinity.  The ends are labelled such that $e_{\ell}$ is between  $s_{\ell -1}$ and $s_{\ell}$ mod $2(d-1)$.}
   \label{sependdisk}
\end{figure}
The separatrix structure can be represented in a
\emph{separatrix disk model}, by blowing up the point at infinity to $\mathbb{S}^1$, labelling the points $\exp \left( \frac{2\pi {\rm i} \ell}{2d-2} \right)$, $\ell=0,\dots,2d-3$ on $\mathbb{S}^1$ by $s_{\ell}$, and embedding the separatrix graph in the disk (see again Figure \ref{sependdisk}).
\subsection{Rectifying Coordinates and Transversals}
The separatrices as explained above provide a decomposition of the dynamical plane into open sets, called \emph{zones}, whose boundaries contain a union of separatrices.   The results in this paper rely on decomposing the dynamical plane into these zones, which are  building blocks that glue together to give a polynomial vector field. We will look at deformations of these glued building blocks to understand the nearby polynomial vector fields, i.e. the bifurcations. The key players here are the \emph{rectifying coordinates}, elaborated on below. Technical details are given in this subsection for self-contained reading, but the main points which are needed for this paper are:
\begin{enumerate}
\item there are three zone types for polynomials: strip, cylinder, and half-plane; and
\item  to understand the statement of Theorem \ref{rank1characterization}, one needs to understand the labelling of separatrices on the boundaries of these strips, cylinders, and half-planes as in Remark \ref{indexrelation}.
\end{enumerate}
\paragraph{Rectifying Coordinates} It is well-known that for polynomials, the connected components of $\mathbb{C}$ minus the separatrix graph (called \emph{zones}) come in three types: center zones, sepal zones, and $\alpha \omega$-zones. In
the last case, there are exactly two singular points on the boundary of the zone, and all trajectories inside the zone have their $\alpha$-limit at one singular point and $\omega$-limit at the other.  These three zone types are isomorphic to half-planes, strips, or cylinders via the \emph{rectifying coordinates} $\Phi(z) = \int_{z_0}^z \frac{dw}{P(w)}$ (called the \emph{distinguished} or \emph{natural parameter} in the literature on quadratic differentials \cite{JAJ}). Under $\Phi$, trajectories are pushed forward to horizontal lines (see Figures \ref{centerrectgen}, \ref{sepalrectgen}, and \ref{striprectgen}). \par

The basin of a center, called a \emph{center zone}, is isomorphic to an upper or lower half-infinite cylinder by the rectifying coordinates $\Phi$ (see Figures \ref{centerrectgen} and \ref{sumdynres}).  Trajectories are pushed forward under $\Phi$ to horizontal circles on the cylinder. The boundary of a center zone consists of a sequence $s_{k_1,j_1}, s_{k_2,j_2}, \dots, s_{k_n,j_n}$ of one or several homoclinic separatrices, together with the ends between them.  A counterclockwise center zone is isomorphic to an upper half-infinite cylinder (as in Figure \ref{centerrectgen}) with the rectified homoclinic separatrices on the lower boundary. The indices of the $s_{k_i,j_i}$ on the boundary satisfy $k_{i+1}=j_{i}+1$, where subindexes $i$ are $i=1,\dots,n$ mod $n$, and indices $k, \ j=0,\dots,2d-3$ are mod $2(d-1)$. A clockwise center zone is isomorphic to a  lower half-infinite cylinder with the sequence of homoclinics on the upper boundary. The indices of the $s_{k_i,j_i}$ on this upper boundary satisfy $k_{i+1}=j_{i}-1$. For a center zone, the first separatrix $s_{k_1,j_1}$ in the sequence is not well-defined, since the sequence of homoclinics closes to a circle. The order is otherwise well-defined by the orientation of the separatrices. \par
\begin{figure}[htbp]%
\large
    \begin{pspicture}(0,0)(.97\textwidth,5)%
    \centering
    \put(1.5,0.25){\includegraphics[width=.85\textwidth]{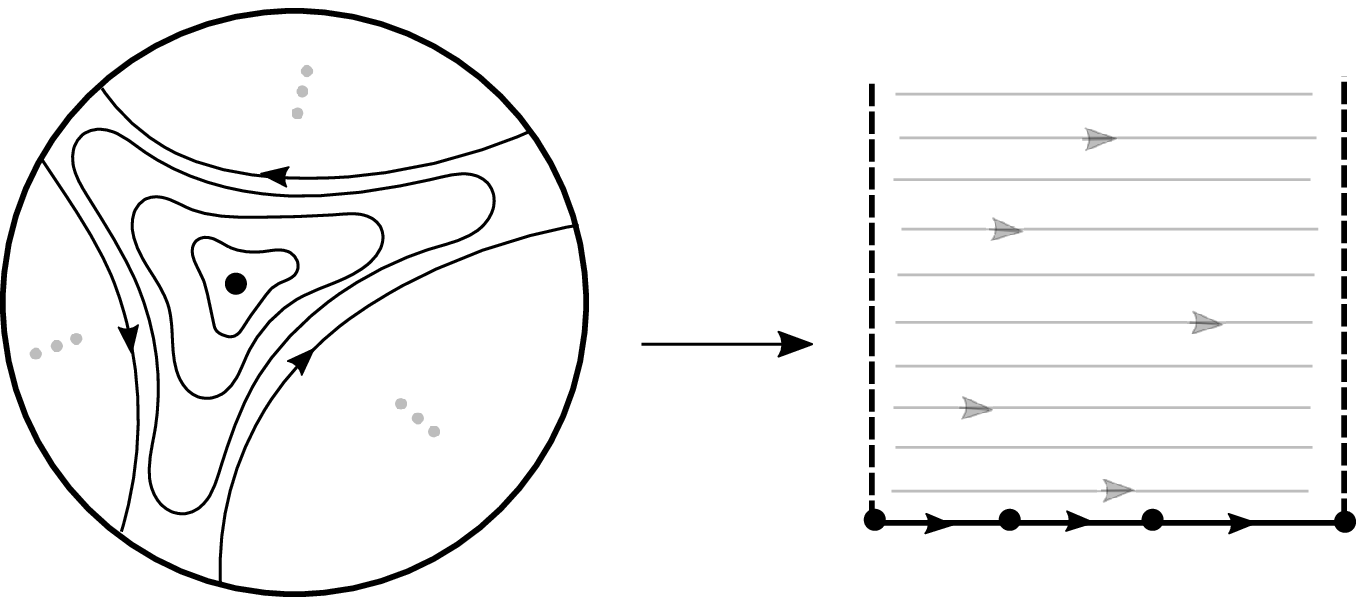}}%
    \put(5.65,3.8){\color[rgb]{0,0,0}\makebox(0,0)[lb]{\smash{{$k_1=j_3+1$}}}}%
    \put(1.9,4.25){\color[rgb]{0,0,0}\makebox(0,0)[lb]{\smash{{$j_1$}}}}%
    \put(0,3.6){\color[rgb]{0,0,0}\makebox(0,0)[lb]{\smash{{$k_2=j_1+1$}}}}%
    \put(2.05,0.3){\color[rgb]{0,0,0}\makebox(0,0)[lb]{\smash{{$j_2$}}}}%
    \put(2.95,0){\color[rgb]{0,0,0}\makebox(0,0)[lb]{\smash{{$k_3=j_2+1$}}}}%
    \put(5.95,3.1){\color[rgb]{0,0,0}\makebox(0,0)[lb]{\smash{{$j_3$}}}}%
    \put(6.9,2.5){\color[rgb]{0,0,0}\makebox(0,0)[lb]{\smash{{$\Phi$}}}}%
    \put(8.4,0.5){\color[rgb]{0,0,0}\makebox(0,0)[lb]{\smash{{$s_{k_1,j_1}$}}}}%
    \put(9.45,0.5){\color[rgb]{0,0,0}\makebox(0,0)[lb]{\smash{{$s_{k_2,j_2}$}}}}%
    \put(10.65,0.5){\color[rgb]{0,0,0}\makebox(0,0)[lb]{\smash{{$s_{k_3,j_3}$}}}}%
\end{pspicture}
  \caption{The basin of a center, called a \emph{center zone} (left), is isomorphic to an upper or lower half-infinite cylinder by the rectifying coordinates $\Phi(z) = \int_{z_0}^z \frac{dw}{P(w)}$ (right). The light gray dots in the disk model on the left represent unspecified dynamics in that region. The vertical dashed lines on the right-hand side are  identified by horizontal translation, giving a half-infinite cylinder. Trajectories are pushed forward under $\Phi$ to horizontal lines (circles, under identification of vertical dashed lines). The boundary of a center zone consists of a sequence $s_{k_1,j_1}, s_{k_2,j_2}, \dots, s_{k_n,j_n}$ of one or several homoclinic separatrices and the \emph{ends} between them.  A counterclockwise center zone gives an upper half-infinite cylinder (as pictured) with the rectified homoclinic separatrices on the lower boundary, whose indices satisfiy  $k_{i+1}=j_{i}+1$, where subindexes are $i=1,\dots,n$ mod $n$, and  $k, \ j=0,\dots,2d-3$ are mod $2(d-1)$. A clockwise center zone gives a lower half-infinite cylinder with the sequence of homoclinics on the upper boundary, whose indices satisfy $k_{i+1}=j_{i}-1$.  The first separatrix $s_{k_1,j_1}$ in the sequence is not well-defined, but the order is otherwise well-defined by the orientation of the separatrices.   }
   \label{centerrectgen}
\end{figure}
A \emph{sepal zone}, i.e. an elliptic sector,  is isomorphic to an upper or lower half-plane by $\Phi$ (see Figure \ref{sepalrectgen}).  Trajectories are pushed forward under $\Phi$ to horizontal lines. The boundary of a sepal zone consists of a sequence of separatrices $s_{j_0}$, $s_{k_1,j_1}, s_{k_2,j_2}, \dots, s_{k_n,j_n}$, $s_{k_0}$  and the ends between them.  The separatrices $s_{j_0}$ and $s_{k_0}$ are landing, and between them are $n\geq 0$ homoclinic separatrices. An upper half-plane  has the rectified separatrices on the lower boundary, whose indices satisfy  $k_{i+1}=j_{i}+1$, $i=0,\dots,n-1$ and $k_0=j_n+1$,  $k_i, \ j_i=0,\dots,2d-3$  mod $2(d-1)$. A lower half-plane (as in Figure \ref{sepalrectgen}) is similar except the index relationship of separatrices on the upper boundary follows the rule $k_{i+1}=j_{i}-1$, $i=0,\dots,n-1$ and $k_0=j_n-1$.  The order of the separatrices on the boundary is well-defined by the orientation of the separatrices in rectifying coordinates, left to right.\par
\begin{figure}[htbp]%
\large
    \begin{pspicture}(0,0)(.98\textwidth,5)%
    \centering
    \put(0.95,0.25){\includegraphics[width=.92\textwidth]{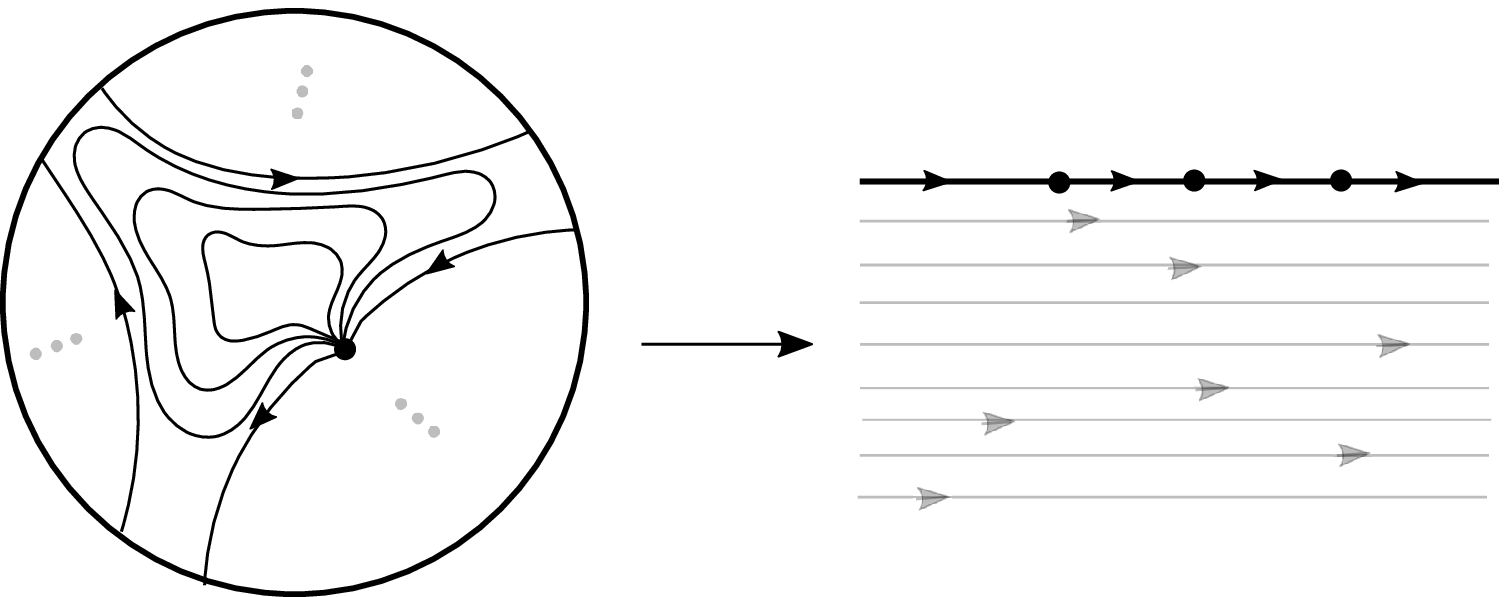}}%
    \put(2.25,0.1){\color[rgb]{0,0,0}\makebox(0,0)[lb]{\smash{{$j_0$}}}}%
    \put(0,0.5){\color[rgb]{0,0,0}\makebox(0,0)[lb]{\smash{{$k_1=j_0-1$}}}}%
    \put(0.8,3.6){\color[rgb]{0,0,0}\makebox(0,0)[lb]{\smash{{$j_1$}}}}%
    \put(0,4.2){\color[rgb]{0,0,0}\makebox(0,0)[lb]{\smash{{$k_2=j_1-1$}}}}%
    \put(5,3.8){\color[rgb]{0,0,0}\makebox(0,0)[lb]{\smash{{$j_2$}}}}%
    \put(5.35,2.95){\color[rgb]{0,0,0}\makebox(0,0)[lb]{\smash{{$k_0$}}}}%
    \put(7.4,3.65){\color[rgb]{0,0,0}\makebox(0,0)[lb]{\smash{{$s_{j_0}$}}}}%
    \put(9,3.65){\color[rgb]{0,0,0}\makebox(0,0)[lb]{\smash{{$s_{k_1,j_1}$}}}}%
    \put(10.1,3.65){\color[rgb]{0,0,0}\makebox(0,0)[lb]{\smash{{$s_{k_2,j_2}$}}}}%
    \put(11.7,3.65){\color[rgb]{0,0,0}\makebox(0,0)[lb]{\smash{{$s_{k_0}$}}}}%
    \put(6.15,2.5){\color[rgb]{0,0,0}\makebox(0,0)[lb]{\smash{{$\Phi$}}}}%
\end{pspicture}
  \caption{A sepal zone, i.e. an elliptic sector,  is isomorphic to an upper or lower half-plane by $\Phi$. The light gray dots in the disk model on the left represent unspecified dynamics in that region. Trajectories are pushed forward under $\Phi$ to horizontal lines. The boundary consists of a sequence of separatrices $s_{j_0}$, $s_{k_1,j_1}, s_{k_2,j_2}, \dots, s_{k_n,j_n}$,  $s_{k_0}$ of two landing separatrices, zero or more homoclinic separatrices, and the \emph{ends} between them.  An upper half-plane  has the rectified separatrices on the lower boundary, whose indices satisfy  $k_{i+1}=j_{i}+1$, $i=0,\dots,n-1$ and $k_0=j_n+1$, with $k_i, \ j_i=0,\dots,2d-3$ mod $2(d-1)$. A lower half-plane (as pictured) has separatrices on the upper boundary whose indices satisfy $k_{i+1}=j_{i}-1$, $i=0,\dots,n-1$ and $k_0=j_n-1$.  The order of the separatrices on the boundary is well-defined by the orientation of the separatrices in rectifying coordinates, left to right.}
   \label{sepalrectgen}
\end{figure}
A region with distinct $\alpha$- and $\omega$-limit points on the boundary is isomorphic to a horizontal strip under $\Phi$ (see Figure \ref{striprectgen}).  Trajectories are pushed forward under $\Phi$ to horizontal lines. The boundary of a strip zone has two components. The upper  component consists of a sequence of separatrices $s^+_{j_0}$, $s^+_{k_1,j_1}, s^+_{k_2,j_2}, \dots, s^+_{k_n,j_n}$,  $s^+_{k_0}$ of two landing separatrices, $n\geq 0$ homoclinic separatrices, and the ends between them.  Similarly, the lower boundary component consists of a sequence of separatrices $s^-_{j_0}$, $s^-_{k_1,j_1}, s^-_{k_2,j_2}, \dots, s^-_{k_m,j_m}$, $s^-_{k_0}$, $m\geq 0$. The relationship between the indices on the upper boundary component is $k^+_{i+1}=j^+_{i}-1$, $i=0,\dots,n-1$ and $k^+_0=j^+_n-1$, and on the lower boundary component is $k^-_{i+1}=j^-_{i}+1$, $i=0,\dots,m-1$ and $k^-_0=j^-_m+1$.  The order of the separatrices on each boundary component is well-defined by the orientation of the separatrices in rectifying coordinates, left to right.\par
\begin{figure}[htbp]%
\large
    \begin{pspicture}(0,0)(.99\textwidth,5.5)%
    \centering
    \put(1.05,0.25){\includegraphics[width=.91\textwidth]{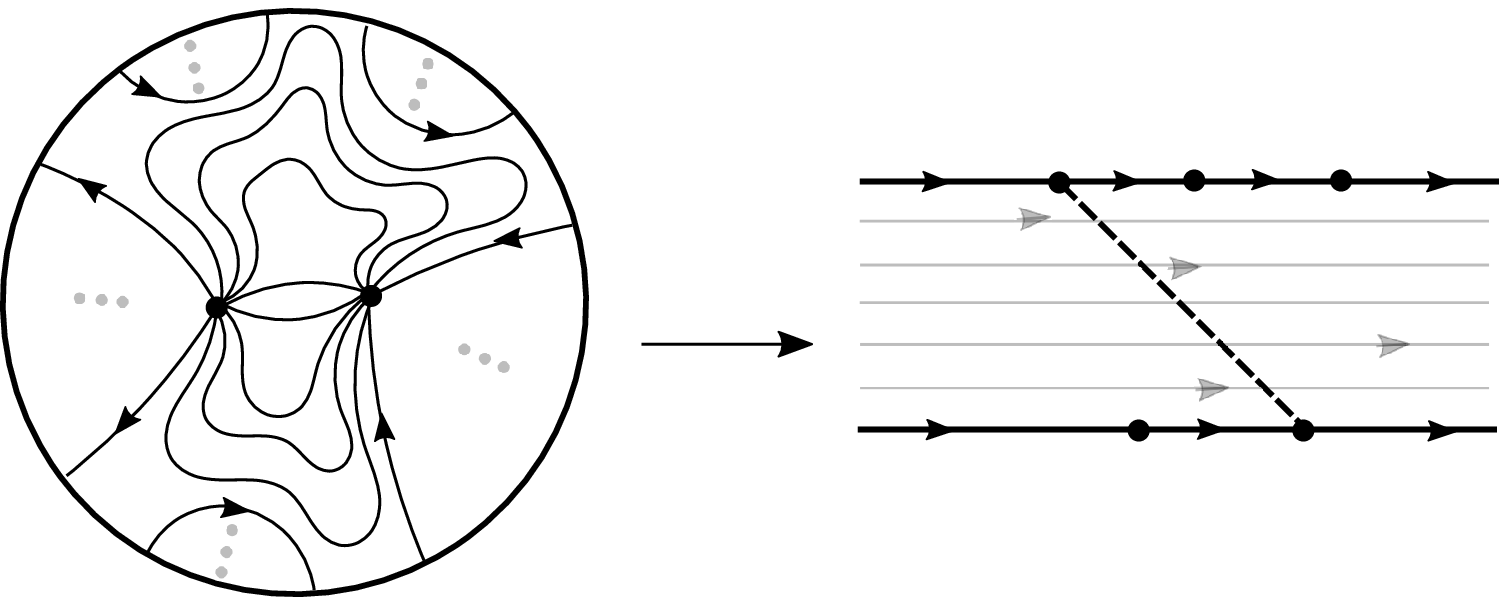}}%
    \put(1,1){\color[rgb]{0,0,0}\makebox(0,0)[lb]{\smash{{$j^-_0$}}}}%
    \put(0.35,0.25){\color[rgb]{0,0,0}\makebox(0,0)[lb]{\smash{{$k^-_1=j^-_0+1$}}}}%
    \put(3.2,0){\color[rgb]{0,0,0}\makebox(0,0)[lb]{\smash{{$j^-_1$}}}}%
    \put(4.15,0.2){\color[rgb]{0,0,0}\makebox(0,0)[lb]{\smash{{$k^-_0=j^-_1+1$}}}}%
    \put(0.9,3.5){\color[rgb]{0,0,0}\makebox(0,0)[lb]{\smash{{$j^+_0$}}}}%
    \put(0,4.2){\color[rgb]{0,0,0}\makebox(0,0)[lb]{\smash{{$k^+_1=j^+_0-1$}}}}%
    \put(2.75,4.75){\color[rgb]{0,0,0}\makebox(0,0)[lb]{\smash{{$j^+_1$}}}}%
    \put(3.75,4.65){\color[rgb]{0,0,0}\makebox(0,0)[lb]{\smash{{$k^+_2=j^+_1-1$}}}}%
    \put(4.85,4){\color[rgb]{0,0,0}\makebox(0,0)[lb]{\smash{{$j^+_2$}}}}%
    \put(5.4,3.1){\color[rgb]{0,0,0}\makebox(0,0)[lb]{\smash{{$k^+_0=j^+_2-1$}}}}%
    \put(7.4,3.6){\color[rgb]{0,0,0}\makebox(0,0)[lb]{\smash{{$s^+_{j_0}$}}}}%
    \put(9,3.6){\color[rgb]{0,0,0}\makebox(0,0)[lb]{\smash{{$s^+_{k_1,j_1}$}}}}%
    \put(10.1,3.6){\color[rgb]{0,0,0}\makebox(0,0)[lb]{\smash{{$s^+_{k_2,j_2}$}}}}%
    \put(11.7,3.6){\color[rgb]{0,0,0}\makebox(0,0)[lb]{\smash{{$s^+_{k_0}$}}}}%
    \put(7.4,1.1){\color[rgb]{0,0,0}\makebox(0,0)[lb]{\smash{{$s^-_{j_0}$}}}}%
    \put(9.75,1.1){\color[rgb]{0,0,0}\makebox(0,0)[lb]{\smash{{$s^-_{k_1,j_1}$}}}}%
    \put(11.7,1.1){\color[rgb]{0,0,0}\makebox(0,0)[lb]{\smash{{$s^-_{k_0}$}}}}%
    \put(6.2,2.4){\color[rgb]{0,0,0}\makebox(0,0)[lb]{\smash{{$\Phi$}}}}%
\end{pspicture}
  \caption{A region with distinct $\alpha$- and $\omega$-limit points on the boundary is isomorphic to a horizontal strip under $\Phi$. The light gray dots in the disk model on the left represent  unspecified dynamics in that region. Trajectories are pushed forward under $\Phi$ to horizontal lines. The boundary of a strip zone has two components. The upper  component consists of a sequence of separatrices $s^+_{j_0}$, $s^+_{k_1,j_1}, s^+_{k_2,j_2}, \dots, s^+_{k_n,j_n}$,  $s^+_{k_0}$ of two landing separatrices, zero or more homoclinic separatrices, and the ends between them.  The lower boundary component consists of a sequence of separatrices $s^-_{j_0}$, $s^-_{k_1,j_1}, s^-_{k_2,j_2}, \dots, s^-_{k_m,j_m}$, $s^-_{k_0}$. The indices on the upper boundary component satisfy $k^+_{i+1}=j^+_{i}-1$, $i=0,\dots,n-1$ and $k^+_0=j^+_n-1$, and the indices on the lower boundary component satisfy  $k^-_{i+1}=j^-_{i}+1$, $i=0,\dots,m-1$ and $k^-_0=j^-_m+1$. The order of the separatrices on each boundary component is well-defined by the orientation of the separatrices in rectifying coordinates, left to right.}
   \label{striprectgen}
\end{figure}
\begin{remark}
\label{indexrelation}
The index relationship of the sequence of separatrices on the boundary of a zone does not depend on the type of zone (center, elliptic, or strip). It only depends on whether the sequence is on an upper boundary or lower boundary.  Omitting details that can be found in the paragraphs above, the relationship can be summarized as: $k_{i+1}=j_i-1$ on an upper boundary, and  $k_{i+1}=j_i+1$ on a lower boundary.
\end{remark}
\paragraph{Transversals}
There are a number of geodesics in $\allowbreak \mathbb{C} \setminus \allowbreak \{ \text{equilibrium points} \}$ that join the point at infinity to itself, with respect to the metric with length element $\frac{|dz|}{|P(z)|}$. This is seen in rectifying coordinates as straight line segments which connect ends to ends. There are $s+h$ among these that are singled out: the $h$ homoclinic separatrices and the  $s$ \emph{distinguished transversals} $T_{k, j}$.
The distinguished transversals are a unique and well-defined choice of one transversal for each of the $s$ strips. This choice is the transversal $T_{k,j}$  which joins $e_k$ to $e_j$, where $e_k$ is the right-most end on the lower boundary of the strip and $e_j$ is the left-most end on the upper boundary of the strip  (see Figures \ref{striprectgen} and \ref{sumdynres}). The reason for this choice is not needed in this paper, but it is explained in detail in \cite{KDcomb}.
\subsection{Classification and Pseudo-Invariants}
\paragraph{Classification}
One way to classify the polynomial vector fields, i.e. define a complete set of realizable invariants, is to express the topological structure via an admissible gluing of strips, cylinders, and half-planes, and the geometry by the \emph{analytic invariants}
\begin{equation}\label{analinvsdef}
  (\underline{\alpha},\underline{\tau})=\left( \int_{T_{k,j}}\frac{dz}{P(z)}, \dots, \int_{s_{k,j}}\frac{dz}{P(z)} \right) \in \HH^s \times \R_+^h,
\end{equation}
where the set of $s_{k,j}$ and $T_{k,j}$ are the homoclinic separatrices and the distinguished transversals for $\xi$. That is, assign $h$ real numbers $\tau=\int_{s_{k,j}}\frac{dz}{P(z)}>0$ to the homoclinics  and $s$ complex numbers $\alpha=\int_{T_{k,j}}\frac{dz}{P(z)}\in \mathbb{H}$ to the distinguished transversals (see \cite{Sent} and \cite{BD09} for more details). The analytic invariants $\tau \in \R_+$ that correspond to homoclinic separatrices are the Euclidean lengths of the rectified homoclinic separatrices on the boundaries of the strips, cylinders, and half-planes; and the analytic invariants $\alpha \in \HH_+$ are complex numbers that record the height and shear of each strip (the complex "length" of $T_{k,j}$). There are other ways to present the classification of polynomial vector fields which will not be explained in this paper. \par
\begin{figure}[htbp]%
\large
    \begin{pspicture}(0,0)(12,5)%
    \put(0.3,0){\includegraphics[width=.95\textwidth]{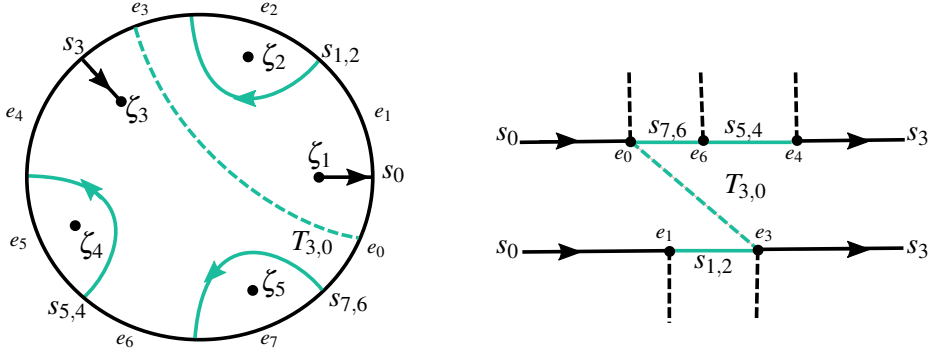}}%
    \put(4,2.4){\color[rgb]{0,0,0}\makebox(0,0)[lb]{\smash{{$\zeta_1$}}}}%
    \put(3.4,3.7){\color[rgb]{0,0,0}\makebox(0,0)[lb]{\smash{{$\zeta_2$}}}}%
    \put(1.6,3){\color[rgb]{0,0,0}\makebox(0,0)[lb]{\smash{{$\zeta_3$}}}}%
    \put(1,1.2){\color[rgb]{0,0,0}\makebox(0,0)[lb]{\smash{{$\zeta_4$}}}}%
    \put(3.4,0.7){\color[rgb]{0,0,0}\makebox(0,0)[lb]{\smash{{$\zeta_5$}}}}%
    \put(5,2.2){\color[rgb]{0,0,0}\makebox(0,0)[lb]{\smash{{$s_0$}}}}%
    \put(4.2,3.8){\color[rgb]{0,0,0}\makebox(0,0)[lb]{\smash{{$s_{1,2}$}}}}%
    \put(0.8,3.9){\color[rgb]{0,0,0}\makebox(0,0)[lb]{\smash{{$s_3$}}}}%
    \put(0.6,0.4){\color[rgb]{0,0,0}\makebox(0,0)[lb]{\smash{{$s_{5,4}$}}}}%
    \put(4.3,0.5){\color[rgb]{0,0,0}\makebox(0,0)[lb]{\smash{{$s_{7,6}$}}}}%
    \put(3.8,1.25){\color[rgb]{0,0,0}\makebox(0,0)[lb]{\smash{{$T_{3,0}$}}}}%
    \put(6.5,1.2){\color[rgb]{0,0,0}\makebox(0,0)[lb]{\smash{{$s_0$}}}}%
    \put(6.5,2.7){\color[rgb]{0,0,0}\makebox(0,0)[lb]{\smash{{$s_0$}}}}%
    \put(11.9,1.2){\color[rgb]{0,0,0}\makebox(0,0)[lb]{\smash{{$s_3$}}}}%
    \put(11.9,2.7){\color[rgb]{0,0,0}\makebox(0,0)[lb]{\smash{{$s_3$}}}}%
    \put(9.1,1){\color[rgb]{0,0,0}\makebox(0,0)[lb]{\smash{{$s_{1,2}$}}}}%
    \put(9.5,2.8){\color[rgb]{0,0,0}\makebox(0,0)[lb]{\smash{{$s_{5,4}$}}}}%
    \put(8.5,2.8){\color[rgb]{0,0,0}\makebox(0,0)[lb]{\smash{{$s_{7,6}$}}}}%
    \put(9.5,2){\color[rgb]{0,0,0}\makebox(0,0)[lb]{\smash{{$T_{3,0}$}}}}%
    \small
    \put(4.8,1.2){\color[rgb]{0,0,0}\makebox(0,0)[lb]{\smash{{$e_0$}}}}%
    \put(4.9,3){\color[rgb]{0,0,0}\makebox(0,0)[lb]{\smash{{$e_1$}}}}%
    \put(3.4,4.4){\color[rgb]{0,0,0}\makebox(0,0)[lb]{\smash{{$e_2$}}}}%
    \put(1.7,4.4){\color[rgb]{0,0,0}\makebox(0,0)[lb]{\smash{{$e_3$}}}}%
    \put(0.05,3){\color[rgb]{0,0,0}\makebox(0,0)[lb]{\smash{{$e_4$}}}}%
    \put(0.1,1.3){\color[rgb]{0,0,0}\makebox(0,0)[lb]{\smash{{$e_5$}}}}%
    \put(1.5,0){\color[rgb]{0,0,0}\makebox(0,0)[lb]{\smash{{$e_6$}}}}%
    \put(3.4,0){\color[rgb]{0,0,0}\makebox(0,0)[lb]{\smash{{$e_7$}}}}%
    \put(8.05,2.45){\color[rgb]{0,0,0}\makebox(0,0)[lb]{\smash{{$e_0$}}}}%
    \put(9.05,2.45){\color[rgb]{0,0,0}\makebox(0,0)[lb]{\smash{{$e_6$}}}}%
    \put(10.3,2.45){\color[rgb]{0,0,0}\makebox(0,0)[lb]{\smash{{$e_4$}}}}%
    \put(8.6,1.4){\color[rgb]{0,0,0}\makebox(0,0)[lb]{\smash{{$e_1$}}}}%
    \put(9.9,1.4){\color[rgb]{0,0,0}\makebox(0,0)[lb]{\smash{{$e_3$}}}}%
\end{pspicture}
  \caption{(Left) The disk model of a degree 5 polynomial vector field, with a sink ($\zeta_3$), a source ($\zeta_1$), and three centers ($\zeta_2, \ \zeta_4, \ \zeta_5$). The two \emph{landing} separatrices ($s_0$ and $s_3$) and three \emph{homoclinic} separatrices ($s_{1,2}$, $s_{5,4}$, $s_{7,6}$), are labelled by their asymptotic directions at infinity. The diagonal dashed line segment $T_{3,0}$ is a \emph{distinguished transversal}, labelled by the \emph{ends} it connects ($e_3$ and $e_0$).
  (Right) The same separatrix configuration in rectifying coordinates. There is one strip, and there are three half-infinite cylinders. The cylinders appear as half-infinite vertical strips (right), where the vertical dashed lines are identified.  The boundary of each zone consists of homoclinic and/or landing separatrices, together with the ends.
  If the separatrices are on the upper (resp. lower) boundary of a strip (as pictured), a half-plane, or a cylinder, then the odd $k$ and even $j$ labels satisfy $k_{i+1}=j_i-1$ (resp. $k_{i+1}=j_i+1$) mod $2(d-1)$, reading left to right. The analytic invariants $\tau \in \R_+$ that correspond to homoclinic separatrices are the Euclidean lengths of the rectified homoclinic separatrices, and the analytic invariant $\alpha \in \HH_+$ corresponds to the complex number that gives the height and shear of the strip.
  }
   \label{sumdynres}
\end{figure}
The implication of the classification that is required for this paper is that it provides a decomposition into strips, cylinders, and half-planes, identified along their boundaries in the appropriate way, and conversely, any (admissible) gluing of strips, cylinders, and half-planes gives rise to a polynomial vector field \cite{BD09}. This construction of a polynomial vector field from gluings of the building blocks is (briefly) as follows.  The strips, cylinders, and half-planes are glued together to construct a Riemann surface which is conformally equivalent to the Riemann sphere minus punctures, and endowing the strips, cylinders, and half-planes with the vector field $\vf$ leads to a polynomial vector field on Riemann sphere, with equilibrium points at the punctures.
We will understand perturbations (bifurcations) of polynomial vector fields by analyzing perturbations in this construction. While the invariants \eqref{analinvsdef} change discontinuously under perturbation (to be explained), one can show that certain perturbations of the building blocks, preserving the original gluing, give rise to polynomial vector fields that are nearby the initial one. This will be explained in further detail in the paragraph below and in Theorem \ref{analstratastr}.
\paragraph{Pseudo-invariants}
The analysis in this paper relies heavily on deformations of the building blocks (strips, cylinders, and half-planes) in rectifying coordinates. More specifically,
let $\xi_0 \in \Xi_d$  be 
the vector field to be perturbed. The classification gives a configuration of half-planes, strips, and cylinders, whose boundaries consist of rectified separatrices and where the vector field is $\vf$. 
Next, the rectified zones are deformed (as sets) by piecewise linear mappings given by sending horizontal segments corresponding to the rectified $s_{k,j}$ for $\xi_0$ to non-horizontal segments (see Figure \ref{distortedzones}).
\begin{figure}[htbp]%
\large
    \begin{pspicture}(0,0)(12,5.5)%
    \put(1.3,0){\includegraphics[width=.8\textwidth]{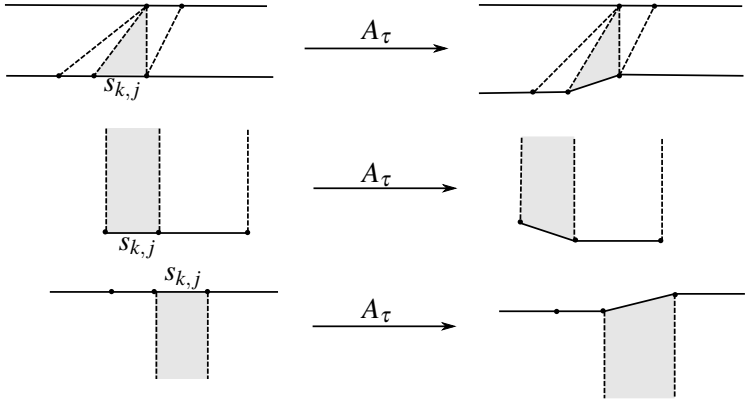}}%
    \put(2.6,4.1){\color[rgb]{0,0,0}\makebox(0,0)[lb]{\smash{$s_{k,j}$}}}%
    \put(2.8,2){\color[rgb]{0,0,0}\makebox(0,0)[lb]{\smash{$s_{k,j}$}}}%
    \put(3.4,1.6){\color[rgb]{0,0,0}\makebox(0,0)[lb]{\smash{$s_{k,j}$}}}%
    \put(6,4.8){\color[rgb]{0,0,0}\makebox(0,0)[lb]{\smash{$A_{\tau}$}}}%
    \put(6,2.9){\color[rgb]{0,0,0}\makebox(0,0)[lb]{\smash{$A_{\tau}$}}}%
    \put(6,1.1){\color[rgb]{0,0,0}\makebox(0,0)[lb]{\smash{$A_{\tau}$}}}%
    \end{pspicture}
\caption{A vector field $\xi \in \Xi_d$ determines a configuration of half-planes, strips, and cylinders and analytic invariants $(\underline{\alpha}, \underline{\tau})\in \HH^s \times \R_+^h$ which are the lengths on the boundaries. Deforming the rectified zones by piecewise linear mappings corresponds to changes in pseudo-invariants $\left(\underline{ \widetilde{\alpha}} , \underline{\widetilde{\tau}} \right)$ that allow the $\widetilde{\tau}$ to be non-real.}
\label{distortedzones}
\end{figure}
As sets, the configuration of the gluing of the rectified zones has not changed, but endowing the distorted zones with $\vf$ will, to be explained by Theorem \ref{analstratastr}, results in another $\xi \in \Xi_d$  close to $\xi_0$, but where $\xi$ has qualitatively different dynamics Even though the configuration of rectified zones has not changed as sets, when the dynamics are considered, the zones have changed type (e.g. Figure \ref{defex0} shows a deformed cylinder that becomes part of a strip zone dynamically).
In terms of $(\underline{\alpha}_0,\underline{\tau}_0)$, this corresponds to allowing one or more of the $\tau_0$ to move off the positive real axis.  However, we can not use $(\underline{\alpha},\underline{\tau})$ to denote the perturbation of $(\underline{\alpha}_0,\underline{\tau}_0)$, since the qualitative change means the distinguished transversals $T_{k,j}$ and homoclinic orbits $s_{k,j}$ change and hence by  definition,  the analytic invariants \eqref{analinvsdef}
will change discontinuously.
This leads us to define the correct quantities to consider to correspond to deformations in rectifying coordinates:
\begin{definition}
The \emph{pseudo-invariants} of $\xi$ with respect to $\xi_0$ are
\begin{equation}\label{pseudoinvsdef}
  \left(\underline{ \widetilde{\alpha}} , \underline{\widetilde{\tau}} \right)=\left( \int_{\gamma_1}\frac{dz}{P_{\xi}(z)}, \dots, \int_{\gamma_{s+h}}\frac{dz}{P_{\xi}(z)} \right)\in \HH^s \times V_{+}^h,
\end{equation}
where $V_{+}$ is an $\epsilon$ neighborhood of $\R_+$, $\gamma_1,\dots,\gamma_{s+h}$ are the curves (as sets) in $\mathbb{C}$ which coincide with the $s$ distinguished transversals and $h$ homoclinic orbits for $\xi_0$ and are fixed under perturbation, and $P_{\xi}$ is a perturbation of $P_{0}$.
\end{definition}
We emphasize that  $\left(\underline{ \widetilde{\alpha}} , \underline{\widetilde{\tau}} \right)$ are in general \emph{not} the analytic invariants for the perturbed vector field (credit is due to the referee  for pointing out this important distinction).
They do correspond to deformations in rectifying coordinates since the initial configuration of half-planes, strips, and cylinders are from the topology of  $\xi_0$, and this configuration as sets has not changed under deformation,  corresponding to fixing the curves $\gamma_1,\dots,\gamma_{s+h}$ under perturbation. Note that the pseudo-invariants are only locally defined.
\section{Parameter Space and Bifurcations}
\label{parmspacesection}
This section reviews  known results on the space $\Xi_d$ of degree $d$ monic, centered complex polynomial vector fields and its decomposition into loci of qualitatively same dynamics. The ideal question to be answered is stated, as well as the restricted question that will be answered in this paper, i.e. describing the  multiplicity-preserving bifurcations. Then we review a result proving the manifold structure of each locus, where the construction of the proof shows that deformations in rectifying coordinates give "nearby" vector fields.
\paragraph{Combinatorial Classes} To understand the possible bifurcations, we examine the space $\Xi_d$ of degree $d$ monic, centered complex polynomial vector fields and partition $\Xi_d$ into \emph{combinatorial classes} $\mathcal{C}$ such that all {$\xi\in\CC$} have the same (labelled) separatrix graph with labelling of the asymptotic directions. Using the (labelled) separatrix graph as the equivalence relation is dynamically meaningful since it implies \emph{topological equivalence} of all $\xi\in\CC$, i.e. there is a homeomorphism which sends trajectories of $\xi\in\CC$ to trajectories of $\xi'\in\CC$ which respects the orientation of but not necessarily the parameterization by time.
Adding the further restriction that the labelling must be the same, there are generally distinct combinatorial classes $\CC_1$ and $\CC_2$ where all $\xi_1\in\CC_1$ are topologically equivalent with all $\xi_2\in\CC_2$.\par
  The space $\Xi_d \cong \C^{d-1}$ since the monic, centered polynomials  $P(z)=z^d+a_{d-2}z^{d-2}+\dots+a_0$ can be parameterized by the coefficients $(a_0,\dots,a_{d-2}) \in \C^{d-1}$.\par
Since the separatrix structure gives topological equivalence of trajectories, a change in separatrix structure under small perturbation therefore gives a bifurcation. Understanding bifurcations is therefore about understanding how the classes $\CC$ fit together in parameter space. This leads us to ask the following question:
\begin{question}
  For every $\xi\in \Xi_d$, what are the  $\CC$ that intersect every neighborhood of $\xi$?
\end{question}
In this paper, the author has chosen to express the answer to this question by showing that each bifurcation is a composition of a finite number of "moves," that is, simpler bifurcations whose types are characterizable.
\paragraph{Stability}
 The \emph{structurally stable} vector fields in $\Xi_d$, i.e. those that do not change qualitatively under  perturbation, are those with neither homoclinic separatrices nor multiple equilibrium points \cite{Sent,TLD09} and are of full dimension in parameter space. The bifurcation locus in $\Xi_d$ consists of all vector fields with at least one homoclinic separatrix or multiple equilibrium point \cite{Sent,TLD09}. One can see this intuitively by imagining a multiple equilibrium point splitting into several equilibrium points or a homoclinic separatrix breaking under perturbation.\par
Bifurcations of \eqref{objects} can only involve breakings of homoclinics and/or splittings of multiple points due to the following result from \cite{TLD09}, which proves that landing separatrices are stable. In other words, an equilibrium point can not "lose" a landing separatrix under small perturbation, as long as its multiplicity is preserved.
\begin{theorem}[Dias, Tan]
\label{landingstablethm}
Let $\zeta^0$ be an equilibrium point for $\xi_0 \in \Xi_d$, and let $\zeta$ be an equilibrium point for $\xi \in \Xi_d$ in a sufficiently small neighborhood of  $\xi_0$ such that $\text{mult}(\zeta^0)=\text{mult}(\zeta)$ and $\lim \limits_{{P} \rightarrow {P_0}}\zeta =\zeta^0$.
If the separatrix $s_{\ell}$ for $\xi_0$ lands at $\zeta^0$, then the separatrix $s_{\ell}$ (same $\ell$) for $\xi$ lands at $\zeta$.
\end{theorem}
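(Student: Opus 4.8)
\emph{Strategy.} The plan is to follow the landing separatrix $s_\ell$ from the pole at $\infty$ to the equilibrium point, using the rigidity of the local dynamics at the pole (whose order $d-2$ is the same for every $\xi\in\Xi_d$) together with a capture argument at the equilibrium point $\zeta$ (whose multiplicity $m$ is preserved by hypothesis). By running time backwards it suffices to treat the case $\ell$ odd, so that $s_\ell^{\xi_0}$ is outgoing from $\infty$: it is defined on an interval $(T_0,+\infty)$, tends to $\infty$ in the asymptotic direction $\ell\pi/(d-1)$ as $t\to T_0^+$, and has $\omega$-limit $\zeta^0$. If $m=1$ then $\zeta^0$ must be a sink, since a center has no landing separatrices and for odd $\ell$ the landing point is an $\omega$-limit; the condition $\mathrm{Re}\,P'(\zeta)<0$ defining a sink is open, so $\zeta$ is again a sink. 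If $m\ge 2$, the local flower structure ($m-1$ attracting and $m-1$ repelling petals) is available at both $\zeta^0$ and $\zeta$. The proof then rests on two ingredients.

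\emph{Ingredient 1: stability of separatrices at the pole.} By the normal form of Sverdlove and of Garijo--Gasull--Jarque \cite{Sve1978,GGJ2000}, the vector field near $\infty$ is conformally conjugate to $\dot z=\lambda_\xi/z^{d-2}$ on a neighborhood that can be taken uniform for $\xi$ in a small neighborhood of $\xi_0$, with the conjugacy and $\lambda_\xi$ depending continuously on $\xi$. In the model the $2(d-1)$ separatrices are straight rays, so: after fixing a large circle $\Gamma=\{|z|=R\}$ inside this neighborhood, $s_\ell^\xi$ is well defined near $\infty$, has asymptotic direction $\ell\pi/(d-1)$, meets $\Gamma$ at a single point $q(\xi)$, and $q(\xi)\to q^0:=s_\ell^{\xi_0}\cap\Gamma$ as $\xi\to\xi_0$. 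Fix the time parameter so that $s_\ell^\xi$ crosses $\Gamma$ at a common time $t_1$.

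\emph{Ingredient 2: local capture at the equilibrium point.} Here I would prove that, for $\xi$ near $\xi_0$, there is a disk $D$ about $\zeta$ and an open \emph{local attracting set} $\Omega(\xi)\subset D$ --- the points whose forward $\xi$-orbit stays in $D$ and converges to $\zeta$ --- such that (a) every orbit that meets $\Omega(\xi)$ has $\omega$-limit $\zeta$, and (b) $\Omega$ is lower semicontinuous in $\xi$: every compact $K\subset\Omega(\xi_0)$ satisfies $K\subset\Omega(\xi)$ for $\xi$ close to $\xi_0$. When $m=1$ this is just hyperbolicity of the sink, with $\Omega=D$. When $m\ge 2$ it is a parametrized version of the Leau--Fatou flower theorem applied to the local normal form of $\xi$ at its zero of order $m$, whose defining data (leading coefficient and dynamical residue $\Res(1/P,\zeta)$) vary continuously precisely because the multiplicity is preserved. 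This is the crux of the argument.

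\emph{Assembly and main obstacle.} Since $s_\ell^{\xi_0}$ has $\omega$-limit $\zeta^0$, it eventually enters $\Omega(\xi_0)$ (trivially if $m=1$, and by the flower theorem if $m\ge 2$, as a trajectory converging to a zero of order $m$ must enter one attracting petal); pick $t^*>t_1$ with $p^*:=s_\ell^{\xi_0}(t^*)$ in the interior of a compact $K\subset\Omega(\xi_0)$. On the compact interval $[t_1,t^*]$ the arc of $s_\ell^{\xi_0}$ is bounded away from $\infty$ and from all equilibria, so by continuous dependence of solutions on the initial condition and on the parameter, the $\xi$-orbit through $q(\xi)$ at time $t_1$ exists on $[t_1,t^*]$ and its value at time $t^*$ tends to $p^*$ as $\xi\to\xi_0$; by Ingredients 1 and 2 it therefore lies in $K\subset\Omega(\xi)$ for $\xi$ near $\xi_0$. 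By (a) this orbit has $\omega$-limit $\zeta$, and by Ingredient 1 it is precisely the separatrix $s_\ell^\xi$ with asymptotic direction $\ell\pi/(d-1)$; hence $s_\ell$ (same $\ell$) lands at $\zeta$. The case $\ell$ even is identical with time reversed. The hard part, as flagged, is Ingredient 2(b): the stability of the attracting petals under the perturbation, where one must verify that the petals do not degenerate and that the disk $D$ can be chosen uniformly for $\xi$ near $\xi_0$.
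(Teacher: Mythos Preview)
This theorem is not proved in the present paper: it is quoted from \cite{TLD09} as a known result (see the sentence immediately preceding the statement), and no proof or sketch appears here. There is therefore no in-paper argument to compare your proposal against.

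That said, your approach is the natural one and is essentially correct. Pinning down $s_\ell$ near $\infty$ via the order-$(d-2)$ normal form (Ingredient~1), transporting across a compact piece of the trajectory by continuous dependence on parameters and initial data, and then capturing into an attracting region at $\zeta$ (Ingredient~2) is exactly the standard architecture for such a stability result. You correctly isolate Ingredient~2(b) as the only nontrivial point. For $m\ge 2$ this step is handled by the local conformal classification at a zero of order $m$: the normal form depends only on $m$ and on $\Res(1/P,\zeta)$ (see the references \cite{BT76,GGJ2000,JAJ} invoked in Section~\ref{prelimssection}), and both of these vary continuously precisely because the multiplicity is preserved by hypothesis; hence the attracting petals persist and vary continuously, giving the required lower semicontinuity of $\Omega$. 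So the flagged step, while requiring care to write out, is not a genuine gap.
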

\paragraph{Multiplicity-preserving Bifurcations}
  The  bifurcations which allow splitting of multiple equilibrium points are more complicated than the multiplicity-preserving bifurcations, and not only because they involve the variation of more parameters. For splitting bifurcations, the possible changes in \emph{topological} structure may depend on the initial \emph{analytic} data, in addition to the  initial topological data \cite{TLD09}. The bifurcations with this added complexity are to be studied in a future paper; the aim in this paper is to analyze the restricted question:
  \begin{question}
  Given any point $\xi \in \Xi_d$ in the bifurcation locus, what are the possible  bifurcations such that the multiplicities of the equilibrium points are preserved?
  \end{question}
  Such bifurcations are called \emph{multiplicity-preserving bifurcations}. \par
\begin{definition}
Let $\zeta^0_i$, $i=1,\dots, N$ be the equilibrium points for $\xi_0 \in \CC_0$, not counting multiplicity. The \emph{local multiplicity-preserving set of $\xi_0$}, denoted $\mathbf{LMP}(\xi_0)$, is the set of all vector fields $\xi \in \Xi_d \setminus \CC_0$,  in a sufficiently small neighborhood of $\xi_0 \in \Xi_d$ such that $\text{mult} (\zeta_i)=\text{mult} (\zeta^0_i)$, $i=1,\dots,N$.
\end{definition}
\begin{definition}
The \emph{multiplicity-preserving set of $\xi_0$}, denoted $\mathbf{MP}(\xi_0)$, is the set of all combinatorial classes $\CC$ whose intersection with $\mathbf{LMP}(\xi_0)$ is non-empty.
\end{definition}
Note that the notation above differs from that in \cite{TLD09}.
\begin{remark}
If $\xi$ has at least one homoclinic separatrix, then $\mathbf{MP}(\xi)$ is non-empty.
\end{remark}
\begin{remark}
Since the multiplicities are preserved, $\mathbf{LMP}(\xi_0)$ can be locally parameterized by the $N$ roots (not counting multiplicity), and in fact can be locally parameterized by $N-1$ roots due to centering of the polynomial.
\end{remark}
\subsection{Deformations Give Bifurcations}
In this subsection, we recall a result from \cite{TLD09}, which proves that each $\mathcal{C}$ is a manifold. While the result as stated is of minor importance for this paper, the construction in the proof implies that piecewise-linear deformations in rectifying coordinates do give "nearby" vector fields in our space $\Xi_d$, and these deformations are (locally) parameterized by the pseudo-invariants \eqref{pseudoinvsdef}.
\paragraph{Parameterizing a Class}
Within a class $\mathcal{C}$, vector fields have the  same zone types (strip, half-plane, cylinder) with the same labelling, but the analytic invariants $\alpha_i\in \mathbb{H}_+$, $i=1,\dots, s$, and $\tau_i\in \mathbb{R}_+$, $i=1,\dots,h$, will be different (see Figure \ref{withinC}).
\begin{figure}[htbp]%
\large
    \begin{pspicture}(0,0)(12,5)%
    \put(1,0){\includegraphics[width=.85\textwidth]{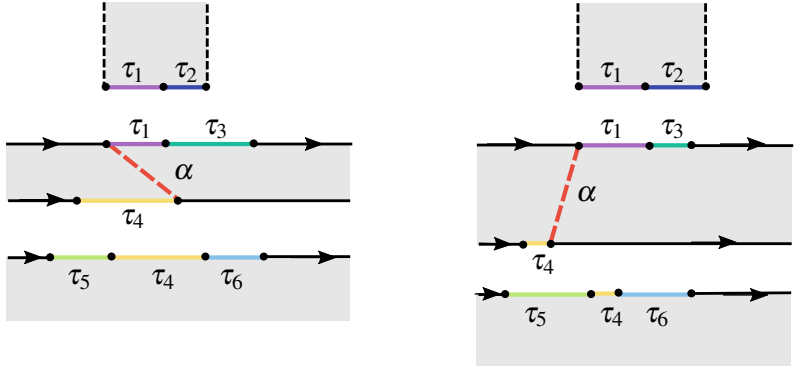}}%
    \put(3.2,2.5){\color[rgb]{0,0,0}\makebox(0,0)[lb]{\smash{{$\alpha$}}}}%
    \put(8.5,2.2){\color[rgb]{0,0,0}\makebox(0,0)[lb]{\smash{{$\alpha$}}}}%
    \put(2.5,3.85){\color[rgb]{0,0,0}\makebox(0,0)[lb]{\smash{{$\tau_1$}}}}%
    \put(8.8,3.85){\color[rgb]{0,0,0}\makebox(0,0)[lb]{\smash{{$\tau_1$}}}}%
    \put(2.6,3.1){\color[rgb]{0,0,0}\makebox(0,0)[lb]{\smash{{$\tau_1$}}}}%
    \put(8.8,3.1){\color[rgb]{0,0,0}\makebox(0,0)[lb]{\smash{{$\tau_1$}}}}%
    \put(3.2,3.85){\color[rgb]{0,0,0}\makebox(0,0)[lb]{\smash{{$\tau_2$}}}}%
    \put(9.6,3.85){\color[rgb]{0,0,0}\makebox(0,0)[lb]{\smash{{$\tau_2$}}}}%
    \put(3.6,3.1){\color[rgb]{0,0,0}\makebox(0,0)[lb]{\smash{{$\tau_3$}}}}%
    \put(9.6,3.1){\color[rgb]{0,0,0}\makebox(0,0)[lb]{\smash{{$\tau_3$}}}}%
    \put(2.5,1.9){\color[rgb]{0,0,0}\makebox(0,0)[lb]{\smash{{$\tau_4$}}}}%
    \put(2.9,1.1){\color[rgb]{0,0,0}\makebox(0,0)[lb]{\smash{{$\tau_4$}}}}%
    \put(7.9,1.3){\color[rgb]{0,0,0}\makebox(0,0)[lb]{\smash{{$\tau_4$}}}}%
    \put(8.8,0.6){\color[rgb]{0,0,0}\makebox(0,0)[lb]{\smash{{$\tau_4$}}}}%
    \put(1.8,1.1){\color[rgb]{0,0,0}\makebox(0,0)[lb]{\smash{{$\tau_5$}}}}%
    \put(7.8,0.6){\color[rgb]{0,0,0}\makebox(0,0)[lb]{\smash{{$\tau_5$}}}}%
    \put(3.8,1.1){\color[rgb]{0,0,0}\makebox(0,0)[lb]{\smash{{$\tau_6$}}}}%
    \put(9.4,0.6){\color[rgb]{0,0,0}\makebox(0,0)[lb]{\smash{{$\tau_6$}}}}%
\end{pspicture}
\caption{Two vector fields $\xi_1 \in \CC$ (left) and $\xi_2 \in \CC$ (right). They have the same zones with the same labelling, but the analytic invariants $\alpha_i\in \mathbb{H}_+$, $i=1,\dots, s$, and $\tau_i\in \mathbb{R}_+$, $i=1,\dots, h$,  are different. }
\label{withinC}
 \end{figure}
This idea is formalized in the following theorem from \cite{TLD09}.
\begin{theorem}[Dias]
\label{analstratastr}
There exists a real analytic isomorphism $F:\HH^s \times \R_+^h\rightarrow \mathcal{C}$, which is $\C$-analytic in the first $s$ coordinates and $\R$-analytic in the last $h$ coordinates. It is the restriction of a holomorphic mapping in $(s+h)$ complex variables: $\widetilde{F} :\HH^s \times V_{+}^h\rightarrow \Xi_d$, where $V_{+}$ is an $\epsilon$ neighborhood of $\R_+$.
\end{theorem}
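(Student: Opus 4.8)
The plan is to define $F$ as the map sending a tuple of analytic invariants to the monic, centered polynomial vector field produced by the gluing construction recalled above, to check that it is a bijection onto $\mathcal C$, and then to obtain the analyticity statements — together with the holomorphic extension — by exhibiting a holomorphic local inverse assembled from the pseudo-invariants. First I would fix once and for all the combinatorial data attached to $\mathcal C$: the list of zone types together with the cyclic orders of the separatrices and ends on their boundaries, and the pattern specifying which boundary segments are glued to which. Given $(\underline{\alpha},\underline{\tau})\in\HH^s\times\R_+^h$, I build abstract strips (bi-infinite horizontal bands of complex height and shear $\alpha_i$), half-planes, and half-infinite cylinders, place rectified separatrix segments of Euclidean lengths $\tau_j$ on their boundaries in the prescribed orders, glue by the prescribed horizontal translations, and endow every piece with $\vf$; because the gluing maps are translations, $\vf$ descends to a holomorphic vector field on the resulting surface $X$. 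By the classification recalled above (\cite{BD09,Sent}, and the construction underlying \cite{TLD09}), $X$ is biholomorphic to $\hat{\C}$ minus finitely many punctures, the vector field extends across the punctures and $\infty$ to a polynomial vector field with a pole of order $d-2$ at $\infty$ and zeros of the prescribed multiplicities at the punctures, and the unique affine normalization making it monic and centered yields $\xi=F(\underline{\alpha},\underline{\tau})\in\Xi_d$, lying in $\mathcal C$ by construction. Injectivity of $F$ is immediate because the analytic invariants \eqref{analinvsdef} are intrinsic to $\xi$: integrating $dz/P$ along the distinguished transversals and homoclinics of $\xi$ returns $(\underline{\alpha},\underline{\tau})$. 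Surjectivity is the converse — for $\xi\in\mathcal C$ the rectifying coordinate $\Phi=\int dz/P$ cuts $\C\setminus\{\text{equilibria}\}$ into strips, half-planes, and cylinders whose shapes and boundary lengths are precisely the analytic invariants of $\xi$ and whose gluing pattern is that of $\mathcal C$, so $\xi=F$ of its own invariants.

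For the analyticity I would avoid any direct control of the uniformization of $X$ and work instead with the pseudo-invariants. Fix a base point $\xi_0\in\mathcal C$ and the curves $\gamma_1,\dots,\gamma_{s+h}$ of the pseudo-invariant definition. Then
\[
  G:=\left(\int_{\gamma_1}\frac{dz}{P_\xi},\dots,\int_{\gamma_{s+h}}\frac{dz}{P_\xi}\right)
\]
is defined and holomorphic on a neighborhood $U$ of $\xi_0$ in $\Xi_d\cong\C^{d-1}$, since it is the integral of $1/P_\xi$ over fixed contours and $P_\xi$ depends holomorphically on the coefficients; its values lie in $\HH^s\times V_+^h$. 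Because the $\gamma_i$ remain homotopic, rel endpoints, to the distinguished transversals and homoclinics of every vector field in the same combinatorial class, the periods are unchanged under such deformations, and one checks that $G\circ\widetilde{F}=\mathrm{id}$ near $(\underline{\alpha}_0,\underline{\tau}_0)$, where $\widetilde{F}$ is the gluing map with the $\tau_j$ allowed to be non-real (the distorted-zone picture of Figure \ref{distortedzones}; for real $\tau_j$ this is $F$). Recalling that $s+h=d-1$ (part of the classification), it follows that as soon as $dG$ is an isomorphism at $\xi_0$, the map $G$ is a local biholomorphism and $\widetilde{F}$ coincides locally with $G^{-1}$; hence $\widetilde{F}:\HH^s\times V_+^h\to\Xi_d$ is holomorphic, and its restriction to $\underline{\tau}\in\R_+^h$, namely $F$, is real analytic, $\C$-analytic in the first $s$ variables (the $\alpha_i$ always enter holomorphically, with no reality constraint) and $\R$-analytic in the last $h$. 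Combining this local statement with the global bijectivity of $F$ onto $\mathcal C$ established above gives that $F$ is a real-analytic isomorphism onto $\mathcal C$, and simultaneously exhibits $\mathcal C$ as the real-analytic submanifold $\{\underline{\widetilde{\tau}}\in\R_+^h\}$ in the pseudo-invariant coordinates.

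Everything thus reduces to showing that the pseudo-invariants form a local holomorphic coordinate system on $\Xi_d$ at $\xi_0$, i.e.\ that $dG$ is invertible. Differentiating under the integral sign, for an admissible monic, centered variation $\dot P\in\C^{d-1}$ of $P_0$ one has
\[
  dG(\dot P)=\left(-\int_{\gamma_i}\frac{\dot P(z)}{P_0(z)^{2}}\,dz\right)_{i=1,\dots,d-1}.
\]
As source and target both have complex dimension $d-1$, it suffices to prove injectivity: if every period on the right vanishes, then $\dot P\equiv0$. The $1$-form $\omega=\dot P\,dz/P_0^{2}$ is meromorphic on $\hat{\C}$ with poles only at the zeros of $P_0$ (of order at most $2$) and at $\infty$, and the space of such forms compatible with the monic, centered normalization is exactly $(d-1)$-dimensional; one shows that the $d-1$ curves $\gamma_i$ — the distinguished transversals and homoclinics — generate the part of the relative homology of $\hat{\C}$ minus the zeros of $P_0$ that is pertinent to these forms, and that the residue data of $\omega$ is already accounted for, so that vanishing of all periods forces $\omega\equiv0$ and hence $\dot P\equiv0$. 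I expect this last step — verifying that the distinguished transversals and homoclinics furnish a full, independent set of period functionals — to be the main obstacle; by contrast the gluing construction, the intrinsic recovery of the invariants, and the holomorphy of $G$ are comparatively routine given the classification recalled in the excerpt.
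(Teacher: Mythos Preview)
Your overall architecture --- build $F$ by gluing, recover bijectivity from the intrinsic analytic invariants, and then deduce holomorphy of $\widetilde F$ by exhibiting it as a local inverse of the holomorphic pseudo-invariant map $G$ --- is different from the route the paper (following \cite{TLD09}) actually takes. There the holomorphy of $\widetilde F$ is obtained \emph{directly}: the piecewise-linear deformations of the rectified zones define a Beltrami coefficient depending holomorphically on $(\underline{\widetilde\alpha},\underline{\widetilde\tau})$, and the parametric version of the Measurable Riemann Mapping Theorem (Ahlfors--Bers) then gives holomorphic dependence of the uniformizing map, hence of the resulting polynomial. No inverse function theorem, no period computation, and no dimension count is needed.

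Your alternative would be attractive, but as written it has a genuine gap. You assert that $s+h=d-1$; this holds only when every equilibrium point is simple. In general $s+h=N-1$, with $N$ the number of \emph{distinct} zeros (see Proposition~\ref{dimcodim} and the identity $m^\ast=d-N$), so whenever $\mathcal C$ contains vector fields with a multiple equilibrium point one has $s+h<d-1$. Then $G:U\subset\Xi_d\to\C^{s+h}$ cannot possibly be a local biholomorphism for dimension reasons, and your inverse-function argument collapses. Relatedly, your description of $\omega=\dot P\,dz/P_0^2$ as having poles ``of order at most $2$'' is only correct at simple zeros; at a zero of multiplicity $m$ the pole order can be $2m$, so the period/residue analysis you sketch would need substantial modification. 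A repair is available: restrict $G$ to the multiplicity-preserving locus near $\xi_0$, which is a complex submanifold of $\Xi_d$ of complex dimension $N-1=s+h$ (locally parameterized by $N-1$ of the roots), and run the inverse function theorem there; but you would then owe a clean proof that $dG$ is nondegenerate on that submanifold, and your current period argument does not supply it. The MRMT route bypasses all of this.
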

In particular, $\widetilde{F}:(\underline{\widetilde{\alpha}},\underline{\widetilde{\tau}})\mapsto \xi_{(\underline{\widetilde{\alpha}},\underline{\widetilde{\tau}})}\in \Xi_d$ is holomorphic (in $s+h$ variables) in a neighborhood of $(\underline{\alpha}_0,\underline{\tau}_0)\in \HH^s \times \R_+^h \subset \C^{s+h}$.\par
    The proof of Theorem \ref{analstratastr} is briefly summarized for its use in this manuscript. For details, see \cite{TLD09}.
A vector field $\xi \in \Xi_d$ is  in a combinatorial class $\mathcal{C}$ and has a corresponding set of analytic invariants $(\underline{\alpha}, \underline{\tau})\in \HH^s \times \R_+^h$. This determines a glued configuration of half-planes, strips, and cylinders, in which the vector field is $\vf$.
The rectified zones (as sets) are deformed by piecewise linear mappings given by allowing the $\tau$ to leave the real axis (see Figure \ref{distortedzones}), corresponding to a set of pseudo-invariants $\left(\underline{\widetilde{\alpha}},\underline{\widetilde{\tau}}\right)$.
The gluing gives a Riemann surface conformally isomorphic to the Riemann sphere with punctures. Endowing the deformed zones with $\vf$ leads to a polynomial vector field $\xi_{(\underline{\widetilde{\alpha}},\underline{\widetilde{\tau}})} \in \Xi_d$ on the Riemann sphere with equilibrium points at the punctures. The holomorphic dependence of parameters in the Measurable Riemann Mapping Theorem gives that $\xi_{(\underline{\widetilde{\alpha}},\underline{\widetilde{\tau}})}$ depends holomorphically on the $(\underline{\widetilde{\alpha}},\underline{\widetilde{\tau}})$.
\section{Deformations in Rectifying Coordinates}
\label{deformationssection}
The results in the above section show that  deformations in rectifying coordinates correspond to bifurcations in the family $\Xi_d$, in particular, those where homoclinic separatrices break in various ways. We will see in examples below that the resulting change in separatrix structure can be read from the deformed rectifying coordinates. We will prove in this section that these deformations "cover" the local multiplicity-preserving set, meaning that every multiplicity-preserving bifurcation can be seen as a deformation in rectifying coordinates.
\paragraph{Example 1 - The Breaking of one Homoclinic Separatrix}
Consider the example depicted in Figure \ref{defex0}. The top row corresponds to a combinatorial configuration (in the disk and in rectifying coordinates) in $\Xi_3$ which has one sink, one source, and one center.  The separatrices $s_2$ and $s_3$ are \emph{landing}, and there is one homoclinic separatrix $s_{1,0}$. Perturbing the single real analytic invariant $\tau$ to have non-zero imaginary part results in breaking the single homoclinic separatrix.  The center becomes a sink or source, and one can see where the separatrices now land by looking in the rectifying coordinates.\par
\begin{figure}[htbp]%
\large
    \begin{pspicture}(0,0)(12,9.5)%
    \put(1.5,0){\includegraphics[width=.75\textwidth]{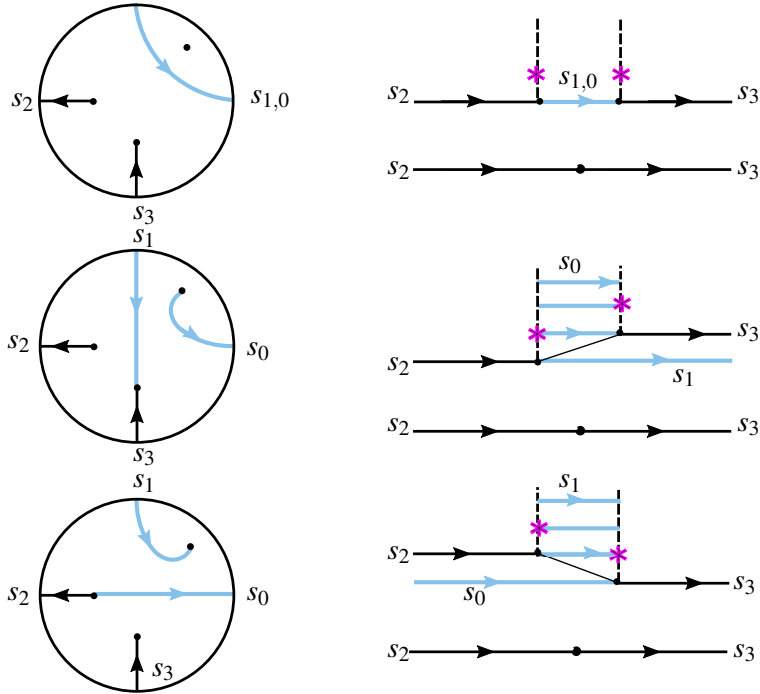}}%
    \put(4.2,7.85){\color[rgb]{0,0,0}\makebox(0,0)[lb]{\smash{ ${s_{1,0}}$}}}%
    \put(1.05,7.8){\color[rgb]{0,0,0}\makebox(0,0)[lb]{\smash{{ ${s_{2}}$}}}}%
    \put(2.65,6.3){\color[rgb]{0,0,0}\makebox(0,0)[lb]{\smash{{ ${s_{3}}$}}}}%
    \put(4.15,4.5){\color[rgb]{0,0,0}\makebox(0,0)[lb]{\smash{{ ${s_{0}}$}}}}%
    \put(2.65,6){\color[rgb]{0,0,0}\makebox(0,0)[lb]{\smash{{ ${s_{1}}$}}}}%
    \put(1,4.6){\color[rgb]{0,0,0}\makebox(0,0)[lb]{\smash{{ ${s_{2}}$}}}}%
    \put(2.65,3.1){\color[rgb]{0,0,0}\makebox(0,0)[lb]{\smash{{ ${s_{3}}$}}}}%
    \put(4.15,1.25){\color[rgb]{0,0,0}\makebox(0,0)[lb]{\smash{{ ${s_{0}}$}}}}%
    \put(2.65,2.75){\color[rgb]{0,0,0}\makebox(0,0)[lb]{\smash{{ ${s_{1}}$}}}}%
    \put(1,1.25){\color[rgb]{0,0,0}\makebox(0,0)[lb]{\smash{{ ${s_{2}}$}}}}%
    \put(2.9,0.25){\color[rgb]{0,0,0}\makebox(0,0)[lb]{\smash{{ ${s_{3}}$}}}}%
    \put(8.25,8.1){\color[rgb]{0,0,0}\makebox(0,0)[lb]{\smash{{ ${s_{1,0}}$}}}}%
    \put(6,6.9){\color[rgb]{0,0,0}\makebox(0,0)[lb]{\smash{{ ${s_{2}}$}}}}%
    \put(6,7.9){\color[rgb]{0,0,0}\makebox(0,0)[lb]{\smash{{ ${s_{2}}$}}}}%
    \put(10.6,6.9){\color[rgb]{0,0,0}\makebox(0,0)[lb]{\smash{{ ${s_{3}}$}}}}%
    \put(10.6,7.9){\color[rgb]{0,0,0}\makebox(0,0)[lb]{\smash{{ ${s_{3}}$}}}}%
    \put(8.25,5.65){\color[rgb]{0,0,0}\makebox(0,0)[lb]{\smash{{ ${s_{0}}$}}}}%
    \put(9.75,4.15){\color[rgb]{0,0,0}\makebox(0,0)[lb]{\smash{{ ${s_{1}}$}}}}%
    \put(6,4.3){\color[rgb]{0,0,0}\makebox(0,0)[lb]{\smash{{ ${s_{2}}$}}}}%
    \put(6,3.5){\color[rgb]{0,0,0}\makebox(0,0)[lb]{\smash{{ ${s_{2}}$}}}}%
    \put(10.6,4.75){\color[rgb]{0,0,0}\makebox(0,0)[lb]{\smash{{ ${s_{3}}$}}}}%
    \put(10.6,3.5){\color[rgb]{0,0,0}\makebox(0,0)[lb]{\smash{{ ${s_{3}}$}}}}%
    \put(7,1.25){\color[rgb]{0,0,0}\makebox(0,0)[lb]{\smash{{ ${s_{0}}$}}}}%
    \put(8.25,2.75){\color[rgb]{0,0,0}\makebox(0,0)[lb]{\smash{{ ${s_{1}}$}}}}%
    \put(6,1.8){\color[rgb]{0,0,0}\makebox(0,0)[lb]{\smash{{ ${s_{2}}$}}}}%
    \put(6,0.5){\color[rgb]{0,0,0}\makebox(0,0)[lb]{\smash{{ ${s_{2}}$}}}}%
    \put(10.55,1.4){\color[rgb]{0,0,0}\makebox(0,0)[lb]{\smash{{ ${s_{3}}$}}}}%
    \put(10.55,0.5){\color[rgb]{0,0,0}\makebox(0,0)[lb]{\smash{{ ${s_{3}}$}}}}%
\end{pspicture}%
\caption{The top row corresponds to a combinatorial configuration (in the disk and in rectifying coordinates) in $\Xi_3$ before perturbation. The equilibrium points are one sink, one source, and one center.  The cylinder corresponding to the basin of the center is depicted in rectifying coordinates as a vertical half-infinite strip, with identification of the dashed lines shown by the asterisks. The separatrices $s_2$ and $s_3$ are \emph{landing}, and there is one homoclinic separatrix $s_{1,0}$. There is one real analytic invariant $\tau$ corresponding to this homoclinic separatrix. The second and third rows of the figure correspond to two possible combinatorial configurations after perturbing the initial. The second (resp. third) row corresponds to allowing $\widetilde{\tau}\in \mathbb{H}_+$ (resp. $\widetilde{\tau}\in \mathbb{H}_-$). In the first case, the perturbed center equilibrium point becomes a source, and in the second case it becomes a sink. Notice that $s_2$ and $s_3$ must continue to land at their respective equilibrium points by Theorem \ref{landingstablethm}. }
\label{defex0}
 \end{figure}
We explain the general situation when exactly one homoclinic separatrix $s_{k,j}$ breaks. A homoclinic separatrix $s_{k,j}$ is on the boundary of exactly two zones: one "upper" and one "lower" (in rectifying coordinates).  Allowing the single $\widetilde{\tau}$ corresponding to  $s_{k,j}$ to vary holomorphically causes the separatrices $s_k$ and $s_j$ to land after perturbation.  If $\widetilde{\tau} \in \HH_+$, the separatrix $s_k$  lands at the \eqpt \ on the boundary of the   lower zone, and $s_j$ lands at the \eqpt \ on the boundary of the upper zone.
If $\widetilde{\tau} \in \HH_-$, the separatrix $s_k$  lands at the \eqpt \ on the boundary of the upper zone, and $s_j$ lands at the \eqpt \ on the boundary of the lower zone.
The \eqpt \ at which $s_k$ (resp. $s_j$) lands after perturbation is either a sink (resp. source) or \multeq , depending on whether the  zones having $s_{k,j}$ on the boundary are vertical half-strips or strips in the first case, or in the latter case, a half-plane.
\paragraph{Example 2 - Forming a New Homoclinic Separatrix}
 If  more than one pseudo-invariant $\widetilde{\tau}$ vary at the same time, more complicated things can happen. In particular, new homoclinic separatrices can form. Figure \ref{simplehomjoin} shows an example of a degree $d=4$ combinatorial configuration before perturbation (left) and after perturbation (right), in the disk model (top) and in rectifying coordinates (bottom). Before perturbation, there is one double equilibrium point, receiving two landing separatrices $s_2$ and $s_3$, and two centers, whose boundaries consist of one homoclinic separatrix each: $s_{1,0}$ and $s_{5,4}$ with lengths $\tau_1$ and $\tau_2$ respectively. In rectifying coordinates, this corresponds to two half-planes (only one is shown) and two cylinders (the dashed vertical half lines should be seen as identified under horizontal translation).  Perturbing such that $\Im (\widetilde{\tau_1})+\Im (\widetilde{\tau_2})=0$ (right), $s_{1,0}$ and $s_{5,4}$ both break, and a new homoclinic separatrix $s_{1,4}$ forms. The separatrices $s_0$ and $s_5$ land after perturbation at a source and a sink respectively, which were centers before perturbation.\par
\begin{figure}[htbp]%
\large
    \begin{pspicture}(0,0)(12,7)%
    \put(0.9,0){\includegraphics[width=.85\textwidth]{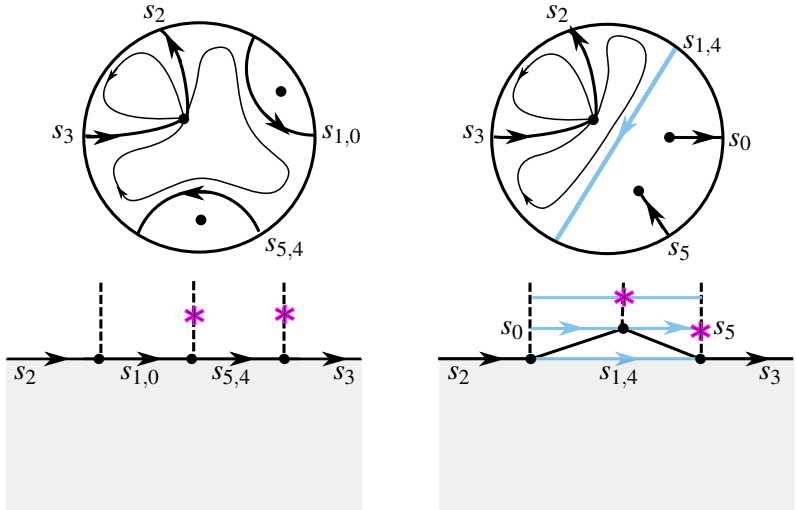}}%
    \put(5.05,5){\color[rgb]{0,0,0}\makebox(0,0)[lb]{\smash{$s_{1,0}$}}}%
    \put(2.7,6.6){\color[rgb]{0,0,0}\makebox(0,0)[lb]{\smash{$s_2$}}}%
    \put(1.5,5){\color[rgb]{0,0,0}\makebox(0,0)[lb]{\smash{$s_3$}}}%
    \put(4.3,3.5){\color[rgb]{0,0,0}\makebox(0,0)[lb]{\smash{$s_{5,4}$}}}%
    \put(2.4,1.8){\color[rgb]{0,0,0}\makebox(0,0)[lb]{\smash{$s_{1,0}$}}}%
    \put(1,1.8){\color[rgb]{0,0,0}\makebox(0,0)[lb]{\smash{$s_2$}}}%
    \put(5.2,1.8){\color[rgb]{0,0,0}\makebox(0,0)[lb]{\smash{$s_3$}}}%
    \put(3.6,1.8){\color[rgb]{0,0,0}\makebox(0,0)[lb]{\smash{$s_{5,4}$}}}%
    \put(10.4,4.9){\color[rgb]{0,0,0}\makebox(0,0)[lb]{\smash{$s_0$}}}%
    \put(9.6,3.45){\color[rgb]{0,0,0}\makebox(0,0)[lb]{\smash{$s_5$}}}%
    \put(9.8,6.2){\color[rgb]{0,0,0}\makebox(0,0)[lb]{\smash{$s_{1,4}$}}}%
    \put(8,6.6){\color[rgb]{0,0,0}\makebox(0,0)[lb]{\smash{$s_2$}}}%
    \put(6.9,5){\color[rgb]{0,0,0}\makebox(0,0)[lb]{\smash{$s_3$}}}%
    \put(7.4,2.4){\color[rgb]{0,0,0}\makebox(0,0)[lb]{\smash{$s_0$}}}%
    \put(10.2,2.4){\color[rgb]{0,0,0}\makebox(0,0)[lb]{\smash{$s_5$}}}%
    \put(8.7,1.8){\color[rgb]{0,0,0}\makebox(0,0)[lb]{\smash{$s_{1,4}$}}}%
    \put(6.7,1.8){\color[rgb]{0,0,0}\makebox(0,0)[lb]{\smash{$s_2$}}}%
    \put(10.8,1.8){\color[rgb]{0,0,0}\makebox(0,0)[lb]{\smash{$s_3$}}}%
  \end{pspicture}%
\caption{An example of a degree $d=4$ combinatorial configuration before perturbation (left) and after perturbation (right), in the disk model (top) and in rectifying coordinates (bottom). Before perturbation, there is one double equilibrium point, receiving two landing separatrices $s_2$ and $s_3$, and two centers, whose boundaries consist of one homoclinic separatrix each: $s_{1,0}$ and $s_{5,4}$ with lengths $\tau_1$ and $\tau_2$ respectively. In rectifying coordinates, this corresponds to two half-planes (only the lower half-plane is shown) and two cylinders (the dashed vertical half lines should be seen as identified as depicted by the asterisks). Perturbing such that $\Im (\widetilde{\tau}_1)+\Im (\widetilde{\tau}_2)=0$ (right), $s_{1,0}$ and $s_{5,4}$ both break, but a new homoclinic separatrix $s_{1,4}$ forms. The separatrices $s_0$ and $s_5$ land after perturbation at the equilibrium points which used to be centers, and become a source and a sink respectively.}
\label{simplehomjoin}
\end{figure}
We now describe the general requirement for a homoclinic separatrix to form under  perturbation. In order to understand this situation, we need to define H-chains.
\begin{definition}
\label{HchainDef}
An \emph{H-chain} of length $n$ is a sequence of $n$  homoclinic separatrices $\{ s_{k_i,j_i}\}$, $i=1,\dots,n$,  such that for each $i=1 \leq n-1$, either $k_{i+1}=j_i+1$  or $k_{i+1}=j_i-1$ mod $2(d-1)$ (see Figure \ref{sumdynres}, where $\{ s_{7,6}, s_{5,4}\}$ is an H-chain of length $2$).
\end{definition}
That is, for each $i=1,\dots,n-1$, the pair $s_{k_i,j_i}$ and $s_{k_{i+1},j_{i+1}}$ are on the boundary of the same zone and are consecutive on the boundary reading left to right in rectifying coordinates (see Figure \ref{product}).
\begin{figure}[htbp]%
\large
    \begin{pspicture}(0,0)(12,5.5)%
    \put(0.9,0){\includegraphics[width=.85\textwidth]{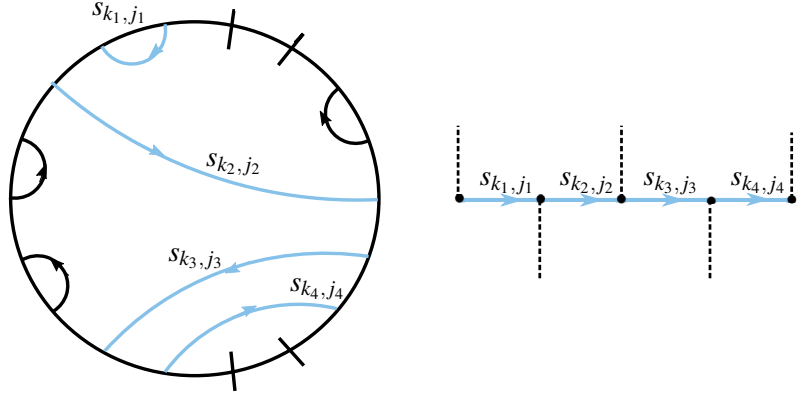}}%
    \put(2,5){\color[rgb]{0,0,0}\makebox(0,0)[lb]{\smash{$s_{k_1,j_1}$}}}%
    \put(3.5,3){\color[rgb]{0,0,0}\makebox(0,0)[lb]{\smash{$s_{k_2,j_2}$}}}%
    \put(3,1.75){\color[rgb]{0,0,0}\makebox(0,0)[lb]{\smash{$s_{k_3,j_3}$}}}%
    \put(4.6,1.35){\color[rgb]{0,0,0}\makebox(0,0)[lb]{\smash{$s_{k_4,j_4}$}}}%
    \put(7.1,2.75){\color[rgb]{0,0,0}\makebox(0,0)[lb]{\smash{$s_{k_1,j_1}$}}}%
    \put(8.15,2.75){\color[rgb]{0,0,0}\makebox(0,0)[lb]{\smash{$s_{k_2,j_2}$}}}%
    \put(9.25,2.75){\color[rgb]{0,0,0}\makebox(0,0)[lb]{\smash{$s_{k_3,j_3}$}}}%
    \put(10.4,2.75){\color[rgb]{0,0,0}\makebox(0,0)[lb]{\smash{$s_{k_4,j_4}$}}}%
  \end{pspicture}%
\caption{An H-chain is a sequence of homoclinic separatrices $\{ s_{k_i,j_i}\}$, $i=1,\dots,n$,  such that for each $i=1,\dots,n-1$, either $k_{i+1}=j_i+1$  or $k_{i+1}=j_i-1$ mod $2(d-1)$. Shown in the disk model (left) and in rectifying coordinates (right). In the figure, $k_2=j_1+1$, $k_3=j_2-1$, and $k_4=j_3+1$ mod $2(d-1)$. The vertical dashed lines on the right are not (necessarily) to be seen as identified as in Figures \ref{sumdynres}, \ref{defex0}, and \ref{simplehomjoin}. They represent a piece of any of the three types of zones (see Remark \ref{indexrelation}). E.g. $s_{k_1,j_1}$ and $s_{k_2,j_2}$ could be on the lower boundary of a strip, $s_{k_2,j_2}$ and $s_{k_3,j_3}$ could be on the upper boundary of a half-plane, and $s_{k_3,j_3}$ and $s_{k_4,j_4}$ could be on the lower boundary of a cylinder. The same remark holds for the vertical dashed lines in Figures \ref{Hchain1}, \ref{simultaneoushom}, \ref{5to4}, \ref{3to2}, and \ref{Hgraphpath1}.}
\label{product}
\end{figure}
The H-chains tell us exactly which homoclinic separatrices can form under  perturbation, as explained in the proposition below.
 \begin{proposition}
 \label{canform}
Let $\xi_0 \in \Xi_d$ have at least two homoclinic separatrices, and let $\xi \in \Xi_d$ be a perturbation of $\xi_0$.  Suppose $s_{k,j_0}$ and $s_{k_0,j}$ are two homoclinic separatrices for $\xi_0$.  The separatrix $s_{k,j}$ for $\xi$, where $k$ and $j$ are the same as in $s_{k,j_0}$ and $s_{k_0,j}$, can form under  perturbation if and only if $s_{k,j_0}$ and $s_{k_0,j}$ belong to a common H-chain where  $s_{k,j_0}$ comes before $s_{k_0,j}$ in the H-chain.
\end{proposition}
For an example, see Figure \ref{simultaneoushom} where $s_{5,4}$ and $s_{13,12}$ have an H-chain in common before perturbation, and $s_{5,12}$  forms after perturbation.
\begin{proof}
Without loss of generality, we may assume the H-chain is of the form
\begin{equation}
s_{k,j_0}=s_{k_1,j_1}, \ s_{k_2,j_2}, \dots , s_{k_n,j_n}=s_{k_0,j}, \quad n\geq 2.
\end{equation}
 For $i=2,\dots,n$, there is a sequence $I_i$ of length $n-1$ with elements in $\{ +, -\}$ corresponding to whether $k_{i+1}=j_i + 1$ or $k_{i+1}=j_i -1$, $i=1,\dots,n-1$ ($I_1$ not defined). If there are $q$ sign changes in this itinerary, then there are $q+1$ zones that $s_k$ needs to pass through to reach $s_j$ (see Figure \ref{Hchain1}).
The separatrix $s_{k,j}$ will form under perturbation if the following conditions on perturbations of the associated $\tau_i$, $i=1,\dots,n$ are satisfied:
For $i=1,\dots,n-1$,
\begin{itemize}
\item if $I_{i+1}=+$, then $\sum \limits_{j=1}^{i} \Im (\widetilde{\tau}_j) <0$;
\item if $I_{i+1}=-$, then $\sum \limits_{j=1}^{i} \Im (\widetilde{\tau}_j) >0$;
\item and $\sum \limits_{j=1}^{n} \Im (\widetilde{\tau}_j) =0$,
\end{itemize} (see again Figure \ref{Hchain1}). These conditions on the partial sums ensure that the new separatrix will not leave a zone before it is desired.\par
If there is no H-chain where $s_{k,j_0}$ comes before $s_{k_0,j}$, then there is no overlapping sequence of zones in rectifying coordinates through which $s_k$ can have access to $s_j$.  The converse then follows from Proposition \ref{covernonsplit} below, which proves that any multiplicity-preserving bifurcation can be realized as a deformation in rectifying coordinates. 
\end{proof}
\begin{figure}[htbp]%
\large
    \begin{pspicture}(0,0)(12,2)%
    \put(0.9,0){\includegraphics[width=.85\textwidth]{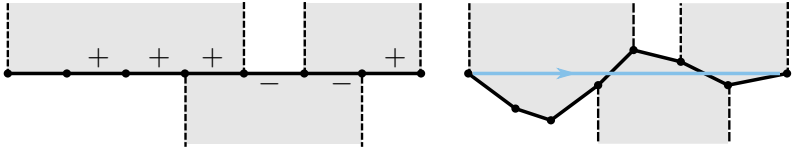}}%
    \put(2,1.1){\color[rgb]{0,0,0}\makebox(0,0)[lb]{\smash{$+$}}}%
    \put(2.8,1.1){\color[rgb]{0,0,0}\makebox(0,0)[lb]{\smash{$+$}}}%
    \put(3.5,1.1){\color[rgb]{0,0,0}\makebox(0,0)[lb]{\smash{$+$}}}%
    \put(4.25,0.75){\color[rgb]{0,0,0}\makebox(0,0)[lb]{\smash{$-$}}}%
    \put(5.2,0.75){\color[rgb]{0,0,0}\makebox(0,0)[lb]{\smash{$-$}}}%
    \put(5.9,1.1){\color[rgb]{0,0,0}\makebox(0,0)[lb]{\smash{$+$}}}%
\end{pspicture}%
\caption{An $H-$chain  $s_{k,j_0}=s_{k_1,j_1}, \ s_{k_2,j_2}, \dots , s_{k_7,j_7}=s_{k_0,j}$. In this example, the sequence $I_i$ for $i=2,\dots,n$ is $I=+,+,+,-,-,+$ ($I_1$ is not defined). A new homoclinic separatrix $s_{k,j}$ can form if the appropriate conditions  are satisfied. In the figure, we require $\widetilde{\tau}_1<0$, $\widetilde{\tau}_1+\widetilde{\tau}_2<0$, $\widetilde{\tau}_1+\widetilde{\tau}_2+\widetilde{\tau}_3<0$, $\widetilde{\tau}_1+\widetilde{\tau}_2+\widetilde{\tau}_3+\widetilde{\tau}_4>0$, $\widetilde{\tau}_1+\widetilde{\tau}_2+\cdots+\widetilde{\tau}_5>0$, $\widetilde{\tau}_1+\widetilde{\tau}_2+\cdots+\widetilde{\tau}_6<0$, and $\widetilde{\tau}_1+\widetilde{\tau}_2+\cdots+\widetilde{\tau}_7=0$. The vertical dashed lines in rectifying coordinates are not (necessarily) to be seen as identified as in Figures \ref{sumdynres}, \ref{defex0}, and \ref{simplehomjoin}. They represent a piece of an unspecified zone that has those homoclinic separatrices on its lower or upper boundary. The same remark holds for the vertical dashed lines in Figures \ref{product},  \ref{simultaneoushom}, \ref{5to4}, \ref{3to2}, and \ref{Hgraphpath1}. }
\label{Hchain1}
\end{figure}
In general, several homoclinic separatrices can form simultaneously under  perturbation, if the conditions on the partial sums as in Proposition \ref{canform} are all satisfied (see Figure \ref{simultaneoushom}).
\begin{figure}[htbp]%
\large
    \begin{pspicture}(0,0)(12,8.5)%
    \put(0.7,0){\includegraphics[width=.9\textwidth]{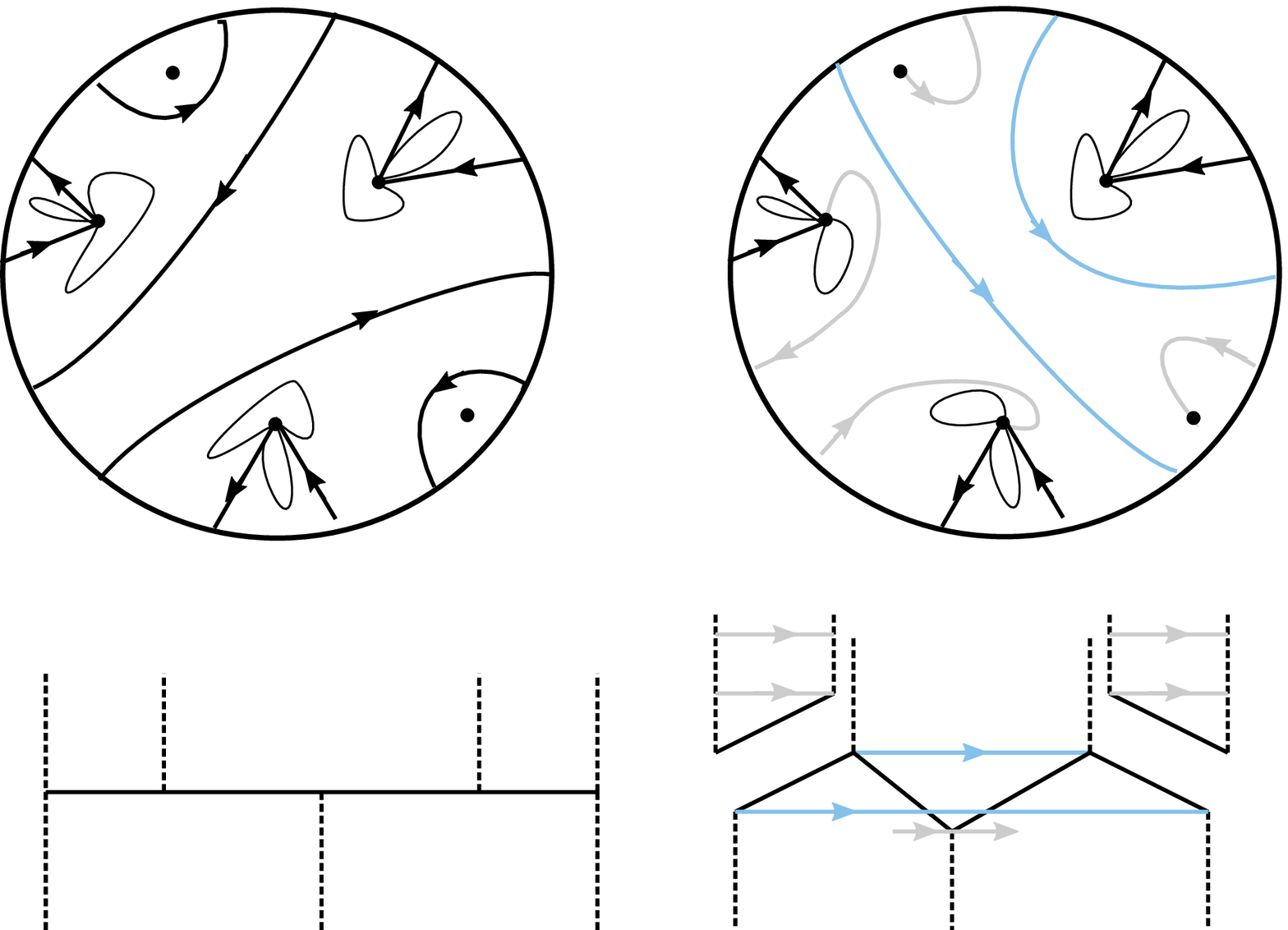}}%
    \put(5.3,6.6){\color[rgb]{0,0,0}\makebox(0,0)[lb]{\smash{$s_1$}}}%
    \put(11.55,6.55){\color[rgb]{0,0,0}\makebox(0,0)[lb]{\smash{$s_1$}}}%
    \put(4.5,7.5){\color[rgb]{0,0,0}\makebox(0,0)[lb]{\smash{$s_2$}}}%
    \put(10.6,7.6){\color[rgb]{0,0,0}\makebox(0,0)[lb]{\smash{$s_2$}}}%
    \put(0.6,6.65){\color[rgb]{0,0,0}\makebox(0,0)[lb]{\smash{$s_6$}}}%
    \put(6.85,6.7){\color[rgb]{0,0,0}\makebox(0,0)[lb]{\smash{$s_6$}}}%
    \put(0.3,5.7){\color[rgb]{0,0,0}\makebox(0,0)[lb]{\smash{$s_7$}}}%
    \put(6.5,5.75){\color[rgb]{0,0,0}\makebox(0,0)[lb]{\smash{$s_7$}}}%
    \put(2.4,3.1){\color[rgb]{0,0,0}\makebox(0,0)[lb]{\smash{$s_{10}$}}}%
    \put(8.5,3.1){\color[rgb]{0,0,0}\makebox(0,0)[lb]{\smash{$s_{10}$}}}%
    \put(3.5,3.15){\color[rgb]{0,0,0}\makebox(0,0)[lb]{\smash{$s_{11}$}}}%
    \put(9.75,3.2){\color[rgb]{0,0,0}\makebox(0,0)[lb]{\smash{$s_{11}$}}}%
    \put(3.5,8){\color[rgb]{0,0,0}\makebox(0,0)[lb]{\smash{$s_{3,8}$}}}%
    \put(2.5,0.9){\color[rgb]{0,0,0}\makebox(0,0)[lb]{\smash{$s_{3,8}$}}}%
    \put(1.1,7.5){\color[rgb]{0,0,0}\makebox(0,0)[lb]{\smash{$s_{5,4}$}}}%
    \put(1.3,0.9){\color[rgb]{0,0,0}\makebox(0,0)[lb]{\smash{$s_{5,4}$}}}%
    \put(1.1,3.6){\color[rgb]{0,0,0}\makebox(0,0)[lb]{\smash{$s_{9,0}$}}}%
    \put(4,0.9){\color[rgb]{0,0,0}\makebox(0,0)[lb]{\smash{$s_{9,0}$}}}%
    \put(4.5,3.5){\color[rgb]{0,0,0}\makebox(0,0)[lb]{\smash{$s_{13,12}$}}}%
    \put(4.95,0.9){\color[rgb]{0,0,0}\makebox(0,0)[lb]{\smash{$s_{13,12}$}}}%
    \put(9.75,8){\color[rgb]{0,0,0}\makebox(0,0)[lb]{\smash{$s_{3,0}$}}}%
    \put(8.75,1.75){\color[rgb]{0,0,0}\makebox(0,0)[lb]{\smash{$s_{3,0}$}}}%
    \put(7.35,7.6){\color[rgb]{0,0,0}\makebox(0,0)[lb]{\smash{$s_{5,12}$}}}%
    \put(7.2,0.7){\color[rgb]{0,0,0}\makebox(0,0)[lb]{\smash{$s_{5,12}$}}}%
    \put(8.75,8){\color[rgb]{0,0,0}\makebox(0,0)[lb]{\smash{$s_4$}}}%
    \put(7,2.2){\color[rgb]{0,0,0}\makebox(0,0)[lb]{\smash{$s_4$}}}%
    \put(6.7,4.6){\color[rgb]{0,0,0}\makebox(0,0)[lb]{\smash{$s_8$}}}%
    \put(8.25,0.6){\color[rgb]{0,0,0}\makebox(0,0)[lb]{\smash{$s_8$}}}%
    \put(7.4,3.7){\color[rgb]{0,0,0}\makebox(0,0)[lb]{\smash{$s_9$}}}%
    \put(9.1,0.6){\color[rgb]{0,0,0}\makebox(0,0)[lb]{\smash{$s_9$}}}%
    \put(11.5,4.5){\color[rgb]{0,0,0}\makebox(0,0)[lb]{\smash{$s_{13}$}}}%
    \put(10.4,2.2){\color[rgb]{0,0,0}\makebox(0,0)[lb]{\smash{$s_{13}$}}}%
    \end{pspicture}%
\caption{(Left) An initial separatrix configuration in the disk model and in rectifying coordinates. (Right) One possible bifurcation where two new homoclinic separatrices form simultaneously. The vertical dashed lines in rectifying coordinates are not (necessarily) to be seen as identified as in Figures \ref{sumdynres}, \ref{defex0}, and \ref{simplehomjoin}. They represent a piece of an unspecified zone that has those homoclinic separatrices on its lower or upper boundary. The same remark holds for the vertical dashed lines in Figures \ref{product},  \ref{Hchain1}, \ref{5to4}, \ref{3to2}, and \ref{Hgraphpath1}.}
\label{simultaneoushom}
\end{figure}
\subsection{Deformations in Rectifying Coordinates Cover the Local Multiplicity-preserving Set}
We prove that all multiplicity-preserving bifurcations can be seen by deformations in the rectifying coordinates. In other words, we want to prove that varying the $\widetilde{\tau}$ covers all multiplicity-preserving bifurcations.
 \begin{proposition}\label{covernonsplit}
  Let $N$ be the number of equilibrium points of $\xi_0$, not counting multiplicity. The map 
  $(\widetilde{\alpha}_1,\dots,\widetilde{\alpha}_s,\widetilde{\tau}_1,\dots,\widetilde{\tau}_h)\mapsto (\zeta_1,\dots, \zeta_{N-1})\in \mathbf{LMP}(\xi_0)$, the local multiplicity-preserving set at $\xi_0$, is locally surjective.
\end{proposition}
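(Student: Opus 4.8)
I would prove local surjectivity of the map
$\Psi: (\widetilde{\alpha}_1,\dots,\widetilde{\alpha}_s,\widetilde{\tau}_1,\dots,\widetilde{\tau}_h)\mapsto (\zeta_1,\dots,\zeta_{N-1})$
by a dimension-count together with an open-mapping / submersion argument, exploiting the holomorphic dependence established in Theorem~\ref{analstratastr}. The starting observation is that $\widetilde{F}$ is holomorphic from a neighborhood of $(\underline{\alpha}_0,\underline{\tau}_0)$ in $\HH^s\times V_+^h\subset\C^{s+h}$ into $\Xi_d\cong\C^{d-1}$, and composing it with the (holomorphic, away from coincidences of roots) map $\Xi_d\to\C^N/\mathfrak{S}_N$ sending a vector field to its unordered zero set — restricted to the locus where multiplicities are preserved — yields a holomorphic map $\Psi$ between the relevant parameter spaces. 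Because multiplicities are preserved and the polynomial is centered, the target $\mathbf{LMP}(\xi_0)$ is locally a complex manifold of dimension $N-1$ (as recorded in the Remark following the definition of $\mathbf{MP}$). So it suffices to show the derivative $D\Psi$ at $(\underline{\alpha}_0,\underline{\tau}_0)$ has rank $N-1$; a holomorphic map of full rank at a point is locally surjective by the rank/submersion theorem.

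The core is therefore the \textbf{rank computation}, which I would do by producing, for each equilibrium point $\zeta^0_i$, an explicit one-parameter deformation inside the $\widetilde\tau$-variables (or $\widetilde\alpha$-variables) that moves $\zeta_i$ to first order and, as much as possible, moves it independently of the others. The natural source of such deformations: each simple equilibrium point (center, sink, source) has in its boundary at least one separatrix — by Theorem~\ref{landingstablethm} landing separatrices are rigid, but a center's basin boundary consists entirely of homoclinics, and more generally every zone whose closure touches $\zeta^0_i$ contributes a $\tau$ or $\alpha$ on whose variation the position of $\zeta^0_i$ depends. Concretely, varying a single $\widetilde\tau_k$ (length of the rectified homoclinic $s_{k,j}$) off the real axis changes the "height" at which the neighboring equilibrium points sit in rectifying coordinates, and via $\int_{z_0}^z dw/P(w)$ this translates into a nonzero first-order displacement of the corresponding root $\zeta$. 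I would compute $\partial\zeta_i/\partial\widetilde\tau_k$ by implicit differentiation of $P_{\widetilde\tau}(\zeta_i(\widetilde\tau))=0$, or more geometrically by tracking how the puncture in the glued Riemann surface moves; the key quantitative input is that the derivative is expressible as a residue-type integral and is generically nonzero. Summing over a well-chosen collection of homoclinics/transversals adjacent to distinct equilibrium points (one can always find at least one such independent direction per equilibrium point, the centering relation accounting for the drop from $N$ to $N-1$), I get a full-rank $(s+h)\times(N-1)$ Jacobian.

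\textbf{The main obstacle} I expect is precisely establishing that these partial derivatives can be chosen to be \emph{linearly independent} across the $N-1$ equilibrium points, rather than merely individually nonzero: a priori, varying one $\widetilde\tau_k$ could move several equilibrium points in a correlated way (indeed a single homoclinic may lie on the boundary of zones touching two equilibria). To handle this I would argue by localization — pick for each $i$ a homoclinic separatrix or distinguished transversal $\gamma_{(i)}$ that lies in the closure of a zone adjacent to $\zeta^0_i$ but "far" (in the combinatorial/graph sense) from the other equilibria, so that to leading order varying $\widetilde\tau_{(i)}$ affects only $\zeta_i$ among the $\zeta$'s not determined by centering; that such a choice exists follows from the structure of the separatrix graph (each equilibrium point has its own "private" portion of boundary, since distinct simple equilibria cannot share the same basin, and multiple points have their own sepal zones). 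With that the Jacobian is, after reordering, block-triangular with nonzero diagonal, hence of rank $N-1$. An alternative, perhaps cleaner, route to the same conclusion: use Theorem~\ref{analstratastr} to see $\Xi_d$ is \emph{covered} near $\xi_0$ by the images of the maps $\widetilde F$ for the finitely many combinatorial classes $\CC$ in $\mathbf{MP}(\xi_0)$ (this is essentially the content of the "deformations give bifurcations" section), and note that the union of these images, intersected with $\mathbf{LMP}(\xi_0)$, is an open neighborhood of $\xi_0$ in $\mathbf{LMP}(\xi_0)$ on which the roots vary; since the roots already parameterize $\mathbf{LMP}(\xi_0)$ and the $(\widetilde\alpha,\widetilde\tau)$ parameterize a neighborhood in each class whose image covers it, surjectivity onto the germ of $\mathbf{LMP}(\xi_0)$ is immediate. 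I would present the argument in this second form, deriving the explicit Jacobian claim only if a reader wants the quantitative statement.
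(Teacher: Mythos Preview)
Your approach is far more elaborate than the paper's, and both of your proposed routes have problems.

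The paper's proof is essentially one observation: the \emph{inverse} map $(\zeta_1,\dots,\zeta_{N-1})\mapsto(\widetilde\alpha_1,\dots,\widetilde\alpha_s,\widetilde\tau_1,\dots,\widetilde\tau_h)$ is given explicitly by the very definition of the pseudo-invariants, namely the integrals $\int_{\gamma_i}dz/P_\xi(z)$ along the \emph{fixed} curves $\gamma_1,\dots,\gamma_{s+h}$. These integrals are manifestly well-defined and continuous (indeed holomorphic) in the roots, and since source and target both have complex dimension $N-1=s+h$, surjectivity follows immediately (e.g.\ from $\Phi\circ\widetilde F=\mathrm{id}$ and the inverse function theorem). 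No Jacobian computation, no combinatorics of zones, is required.

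Your first route---computing $D\Psi$ directly and arguing for linear independence via ``private'' homoclinics/transversals adjacent to each equilibrium point---runs into exactly the obstacle you flag, and your sketch for resolving it is not convincing: a single homoclinic generically lies on the boundary of zones touching two distinct equilibria, and multiple equilibrium points need not have any homoclinics on their sepal boundaries at all, so the block-triangular structure you claim is not evidently available. This is a genuine gap, not just a matter of filling in details.

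Your second ``cleaner'' route is circular. You write that Theorem~\ref{analstratastr} shows $\Xi_d$ is covered near $\xi_0$ by the images of the $\widetilde F$'s for the classes $\CC\subset\mathbf{MP}(\xi_0)$, but Theorem~\ref{analstratastr} says no such thing---it only exhibits each $\widetilde F$ as a holomorphic map, with no claim about the union of images. The covering property is precisely the content of Proposition~\ref{covernonsplit}, so you cannot invoke it here.

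The insight you are missing is that the pseudo-invariants are \emph{defined} as integrals against the perturbed polynomial over curves fixed once and for all by $\xi_0$; this hands you the inverse of $\Psi$ for free and collapses the entire argument to a triviality.
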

\begin{proof}
Due to centering, only $N-1$ roots are independent, hence one can locally parameterize the local multiplicity-preserving set by $N-1=s+h$ of the roots.
 It is enough to show that the map $(\zeta_1,\dots, \zeta_{N-1})\mapsto (\widetilde{\alpha}_1,\dots,\widetilde{\alpha}_s,\widetilde{\tau}_1,\dots,\widetilde{\tau}_h)$ is (locally) well-defined and continuous for all $(\zeta_1,\dots, \zeta_{N-1})\in \mathbf{LMP}(\xi_0)$, the local multiplicity-preserving set of $\xi_0$. This is trivially true by noting that by Definition \ref{pseudoinvsdef},
\begin{equation}
  (\widetilde{\alpha}_1,\dots,\widetilde{\alpha}_s,\widetilde{\tau}_1,\dots,\widetilde{\tau}_h)=\left( \int_{\gamma_1}\frac{dz}{P_{\xi}(z)}, \dots, \int_{\gamma_{s+h}}\frac{dz}{P_{\xi}(z)} \right).
\end{equation}
 These integrals are locally well-defined and depend continuously on the $(\zeta_1,\dots, \zeta_{N-1})$.\par
\end{proof}
Therefore, to understand the multiplicity-preserving bifurcations, it is enough to analyze deformations in rectifying coordinates. 
\section{Multiplicity-preserving Bifurcations Can Be Decomposed into Compositions of Rank 1 Bifurcations}
\label{rank1compsection}
 In this section, it is  proved that every multiplicity-preserving bifurcation  can be realized as a composition of simpler bifurcations, the \emph{rank 1} bifurcations (to be defined).  The rank 1 bifurcations will be characterized in Section \ref{characterizerank1}, showing they can not be arbitrarily complicated. \par
\subsection{Dimension and Codimension of a Class}
Note that Theorem \ref{analstratastr} gives the (real) dimension of a combinatorial class.
\begin{proposition}\label{dimcodim}
For a combinatorial class $\mathcal{C}\in \Xi_d$, with $s$ strips, $h$ homoclinic separatrices, and $m^{\ast}=\sum_i (\text{mult}(\zeta_i)-1)$,
\begin{equation}
\dim_{\mathbb{R}}(\mathcal{C})=2s+h, \quad \text{and} \quad \codim_{\mathbb{R}}(\mathcal{C})=2m^{\ast}+h.
\end{equation}
\end{proposition}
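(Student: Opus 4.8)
The plan is to read off both formulas from Theorem~\ref{analstratastr}, which identifies each class $\mathcal{C}$ with $\HH^s\times\R_+^h$ via the real-analytic isomorphism $F$. Since $\HH$ is an open subset of $\C$ it contributes $2$ real dimensions per strip, and $\R_+$ is an open subset of $\R$, contributing $1$ real dimension per homoclinic separatrix; hence $\dim_{\R}(\mathcal{C}) = 2s + h$ immediately. This is the easy half, and it requires nothing beyond the cited theorem.

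For the codimension, the ambient space is $\Xi_d\cong\C^{d-1}$, so $\dim_{\R}\Xi_d = 2(d-1)$, and $\codim_{\R}(\mathcal{C}) = 2(d-1) - (2s+h)$. The remaining work is purely combinatorial: I need to show $2(d-1) - 2s - h = 2m^{\ast} + h$, equivalently $d-1 = s + h + m^{\ast}$. First I would recall that a degree $d$ monic polynomial has $d$ roots counted with multiplicity, so $N + m^{\ast} = d$ where $N$ is the number of distinct equilibrium points; equivalently the number of distinct equilibria satisfies $N = d - m^{\ast}$. Next, by the Remark following the definition of $\mathbf{MP}(\xi_0)$ (and the parameterization of $\mathbf{LMP}(\xi_0)$ by $N-1$ roots), a class is locally parameterized by $N-1$ independent root coordinates, while Theorem~\ref{analstratastr} parameterizes the same (multiplicity-preserving) local picture by $s+h$ analytic invariants; matching these two counts gives $N - 1 = s + h$. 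Substituting $N = d - m^{\ast}$ yields $d - m^{\ast} - 1 = s + h$, i.e. $d - 1 = s + h + m^{\ast}$, which is exactly what is needed. Then
\[
\codim_{\R}(\mathcal{C}) = 2(d-1) - (2s+h) = 2(s+h+m^{\ast}) - 2s - h = 2m^{\ast} + h.
\]

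The main (and essentially only) obstacle is justifying the identity $N - 1 = s + h$ cleanly, i.e. that the number of strips plus homoclinic separatrices of a class exactly equals one less than the number of distinct equilibrium points. One route is the dimension-matching argument above, using Proposition~\ref{covernonsplit} together with the fact that $\mathbf{LMP}(\xi_0)$ is locally a manifold of real dimension $2(N-1)$ (parameterized by $N-1$ complex roots) while the $(\widetilde\alpha,\widetilde\tau)$ live in a space of real dimension $2s+h$ but only $s+h$ of those directions move within a fixed class — so $2(N-1)$ must match... more carefully, the cleanest phrasing is: within the locus $\mathbf{LMP}(\xi_0)$ the surviving homoclinics impose $h$ real conditions ($\widetilde\tau\in\R_+$) and the combinatorial class $\mathcal{C}$ itself is cut out inside the $(s+h)$-complex-dimensional deformation space by requiring the $h$ pseudo-invariants to be real, consistent with $\dim_\R\mathcal C = 2s+h$ and the parameterization by $s+h$ roots, forcing $N-1 = s+h$. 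Alternatively, one can argue directly from the separatrix graph: an Euler-characteristic / handle-count for the gluing of $s$ strips, $h$-many homoclinic-boundary pieces, and the center/sepal zones attached at the $N$ punctures on the sphere, which also yields $d-1 = s+h+m^{\ast}$. Either way, once that single identity is in hand the proposition is immediate.
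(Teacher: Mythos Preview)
Your proof follows the same route as the paper's: read off $\dim_{\R}\mathcal C=2s+h$ from the isomorphism $\mathcal C\cong\HH^s\times\R_+^h$ of Theorem~\ref{analstratastr}, use $\dim_{\R}\Xi_d=2(d-1)$, and reduce the codimension formula to the identity $s+h=N-1$ combined with $m^\ast=d-N$. The only difference is that the paper simply \emph{asserts} $s+h=N-1$ as a known fact from the classification (references \cite{BD09,Sent}) and moves on, whereas you spend most of your proposal trying to derive it; be aware that your dimension-matching route through Proposition~\ref{covernonsplit} is presentationally awkward since that proposition appears after this one and its own proof already invokes $N-1=s+h$ as given.
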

\begin{proof}
Since $\Xi_d \simeq \mathbb{C}^{d-1}$, it follows $\dim_{\R}(\Xi_d)=2d-2$.  A combinatorial class $\mathcal{C}$ is analytically isomorphic to  $\HH^s \times \R_+^h$, giving $\dim_{\R}(\mathcal{C})=2s+h$.  Let $N$ be the number of equilibrium points not counting multiplicity.   Then $m^{\ast}=d-N$. Putting this together with $s+h=N-1$ gives $s=d-m^{\ast}-1-h$. Therefore, the (real) codimension of each class as a subset of parameter space is
\begin{align}
\codim_{\R}(\mathcal{C})&=2d-2-(2s+h) \nonumber \\
&=2d-2-2(d-m^{\ast}-1-h)-h \nonumber \\
&=2m^{\ast}+h.
\end{align}
\end{proof}
\begin{definition}
The \emph{boundary $\partial \CC$ of a combinatorial class $\CC$} is defined in the usual sense: the closure $\overline{\CC}$ in $\Xi_d$, minus $\CC$.
\end{definition}
The following lemma proves a statement crucial to the proof of Theorem \ref{rank1comp}.  Namely, for multiplicity-preserving sets, if one class intersects the boundary of another, then it must be entirely contained in the boundary.  This does not generally hold for bifurcations which allow splitting of multiple points and would not be directly applicable to that case.
\begin{lemma}
\label{bdycontain}
Let $\xi_0 \in \mathcal{C}_0 \subset \Xi_d$ have at least one homoclinic separatrix so that $\mathbf{MP}(\xi_0)$ is non-empty.
For every $\mathcal{C}\subset \mathbf{MP}(\xi_0)$, it holds that $\mathcal{C}_0 \subset \partial \mathcal{C}$.
\end{lemma}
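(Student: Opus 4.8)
The plan is to exploit the two structural results already established: Proposition~\ref{covernonsplit}, which says that every vector field in $\mathbf{LMP}(\xi_0)$ arises as a deformation of $\xi_0$ in rectifying coordinates (i.e.\ as $\widetilde{F}$ applied to a perturbed set of pseudo-invariants with the $\widetilde{\tau}$ allowed off the real axis), together with the codimension count of Proposition~\ref{dimcodim}. The key observation to make first is that a class $\CC \subset \mathbf{MP}(\xi_0)$ is, by definition, hit by $\mathbf{LMP}(\xi_0)$, hence is hit arbitrarily close to $\xi_0$; so $\CC_0$ lies in $\overline{\CC}$ and the only thing to rule out is $\CC_0 \cap \CC \neq \emptyset$, which is immediate since $\mathbf{LMP}(\xi_0)$ is disjoint from $\CC_0$ by its very definition ($\xi \in \Xi_d \setminus \CC_0$). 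Thus the content of the lemma is really: \emph{all} of $\CC_0$, not just $\xi_0$, sits in $\partial\CC$.

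To get that, I would argue as follows. Pick any $\xi_0' \in \CC_0$. Since $\xi_0'$ has the same (labelled) separatrix graph as $\xi_0$, it has the same list of homoclinic separatrices and distinguished transversals, hence the same $s$ and $h$, and by Theorem~\ref{analstratastr} it is $\widetilde{F}(\underline{\alpha}',\underline{\tau}')$ for some $(\underline{\alpha}',\underline{\tau}') \in \HH^s \times \R_+^h$. Now take a point $\eta \in \CC \cap \mathbf{LMP}(\xi_0)$ near $\xi_0$; by Proposition~\ref{covernonsplit} (and the description of deformations in rectifying coordinates) it is $\widetilde{F}(\underline{\widetilde\alpha},\underline{\widetilde\tau})$ for pseudo-invariants close to $(\underline{\alpha}_0,\underline{\tau}_0)$, where the combinatorial type $\CC$ is determined by the signs of the relevant partial sums $\sum \Im(\widetilde\tau_j)$ along the H-chains (as in the proof of Proposition~\ref{canform}). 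The signs that determine $\CC$ form an \emph{open} condition on the $\underline{\widetilde\tau}$; since $\widetilde F$ is holomorphic in $(s+h)$ variables and $\CC_0$, $\mathbf{LMP}(\xi_0')$ are all parameterized (via the same $\widetilde F$ up to the change of basepoint for the pseudo-invariants) by the same open set of $(\underline{\widetilde\alpha},\underline{\widetilde\tau})$-space, I can transplant the sign pattern realizing $\CC$ near $\xi_0$ to a sign pattern near $(\underline{\alpha}',\underline{\tau}')$, producing vector fields in $\CC$ arbitrarily close to $\xi_0'$. Hence $\xi_0' \in \overline{\CC}$; as $\xi_0' \notin \CC$ (different number of homoclinics/multiplicities, or different sign pattern), $\xi_0' \in \partial\CC$. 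Since $\xi_0'$ was arbitrary, $\CC_0 \subset \partial\CC$.

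The main obstacle is making rigorous the claim that the combinatorial type of $\widetilde F(\underline{\widetilde\alpha},\underline{\widetilde\tau})$ depends only on an open condition in $(\underline{\widetilde\alpha},\underline{\widetilde\tau})$ that is ``translation invariant'' as the basepoint $\xi_0$ moves within $\CC_0$ — i.e.\ that the bifurcation pattern one sees is governed purely by the local rectified picture (the zone adjacencies and the H-chain structure), which is identical for every $\xi_0' \in \CC_0$, and \emph{not} by the particular values $(\underline{\alpha}',\underline{\tau}')$. This is plausible precisely because we are in the multiplicity-preserving setting (the lemma's hypothesis), where Theorem~\ref{landingstablethm} forbids the ``extra'' degeneracies that make splitting bifurcations depend on analytic data; I would phrase it as: the set of pseudo-invariant perturbations realizing a fixed combinatorial class $\CC$ is a cone (in the $\Im \widetilde\tau$ coordinates) cut out by finitely many strict linear inequalities and one linear equation per newly-formed homoclinic, and this cone is the same for all basepoints in $\CC_0$. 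Granting that, continuity of $\widetilde F$ finishes the argument. A secondary, more routine point to check is the trivial half — that $\CC_0 \cap \overline{\CC} \ni \xi_0$ — which follows directly from $\CC \in \mathbf{MP}(\xi_0)$ and the definition of $\mathbf{LMP}(\xi_0)$ as lying in every neighborhood of $\xi_0$.
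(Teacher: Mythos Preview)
Your proposal is correct and follows essentially the same line as the paper's own proof, though you have spelled out in considerably more detail what the paper compresses into two sentences. The paper's argument is simply: $\CC_0 \cap \partial\CC \neq \emptyset$ (since $\xi_0 \in \partial\CC$), and then $\CC_0 \subset \partial\CC$ because, by Theorem~\ref{analstratastr}, the combinatorics of the multiplicity-preserving bifurcation depend only on the relative imaginary parts of the perturbed pseudo-invariants $(\widetilde\alpha_1,\dots,\widetilde\alpha_s,\widetilde\tau_1,\dots,\widetilde\tau_h)$ and not on the initial homoclinic lengths. Your ``main obstacle'' --- that the combinatorial class of $\widetilde F(\underline{\widetilde\alpha},\underline{\widetilde\tau})$ is governed by a basepoint-independent cone of sign conditions on $\Im\widetilde\tau$ --- is exactly the content the paper is invoking from Theorem~\ref{analstratastr}; you have correctly identified this as the heart of the matter and unpacked it, whereas the paper simply asserts it.
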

\begin{proof}
Necessarily $\mathcal{C}_0 \cap \partial \mathcal{C}\neq \emptyset$.  Then  $\mathcal{C}_0 \subset \partial\mathcal{C}$, since by Theorem \ref{analstratastr} the combinatorics of the multiplicity-preserving bifurcations do not depend on initial homoclinic length, but only the relative imaginary parts of the perturbed $(\widetilde{\alpha}_1,\dots,\widetilde{\alpha}_s,\widetilde{\tau}_1,\dots,\widetilde{\tau}_h)$. 
\end{proof}
\begin{corollary}
\label{lessdim}
It follows  that $\CC_0$ has strictly greater codimension than all $\CC \subset \mathbf{MP}(\xi_0)$.
\end{corollary}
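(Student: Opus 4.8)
The statement to prove, Corollary \ref{lessdim}, follows almost immediately from combining Lemma \ref{bdycontain} with Proposition \ref{dimcodim}. The plan is as follows.

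Let $\CC \subset \mathbf{MP}(\xi_0)$ be arbitrary. By Lemma \ref{bdycontain}, $\CC_0 \subset \partial\CC = \overline{\CC} \setminus \CC$. In particular $\CC_0$ and $\CC$ are disjoint combinatorial classes with $\CC_0 \subset \overline{\CC}$, so $\dim_{\R}(\CC_0) \leq \dim_{\R}(\overline{\CC})=\dim_{\R}(\CC)$; moreover the inclusion into the \emph{boundary} (a set of strictly lower dimension than $\CC$ itself, since $\CC$ is a manifold of dimension $2s+h$ and its frontier in $\Xi_d$ has dimension strictly less) forces $\dim_{\R}(\CC_0) < \dim_{\R}(\CC)$, equivalently $\codim_{\R}(\CC_0) > \codim_{\R}(\CC)$.

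Alternatively — and this is the route I would actually write, since it avoids any appeal to general facts about frontiers of manifolds — one argues directly with the codimension formula of Proposition \ref{dimcodim}. Write $\codim_{\R}(\CC_0) = 2m^{\ast}_0 + h_0$ and $\codim_{\R}(\CC) = 2m^{\ast} + h$, where $m^{\ast}_0, m^{\ast}$ count the excess multiplicities and $h_0, h$ count the homoclinic separatrices of the two classes. Since the bifurcations in $\mathbf{MP}(\xi_0)$ are multiplicity-preserving, $m^{\ast} = m^{\ast}_0$. Thus it suffices to show $h < h_0$, i.e. that passing from $\xi_0$ to a vector field in $\CC$ strictly decreases the number of homoclinic separatrices. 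By Lemma \ref{bdycontain} and the description of deformations in rectifying coordinates (Theorem \ref{analstratastr} together with the analysis in Section \ref{deformationssection}), the perturbation $\xi \in \CC$ is obtained from $\xi_0$ by letting one or more of the $\widetilde{\tau}_i$ acquire nonzero imaginary part; every homoclinic separatrix of $\xi$ either persists as a homoclinic separatrix of $\xi_0$ (with a corresponding fixed curve $\gamma_i$) or is newly formed, and — crucially, by Proposition \ref{canform} — each newly formed homoclinic of $\xi$ requires at least two of the original homoclinics of $\xi_0$ to break. Hence $h_0 \geq h + 1$: at least one original homoclinic separatrix is destroyed and not all destroyed ones can be replaced one-for-one. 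Therefore $\codim_{\R}(\CC_0) = 2m^{\ast}_0 + h_0 > 2m^{\ast} + h = \codim_{\R}(\CC)$.

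The main obstacle is making the counting argument $h_0 > h$ fully rigorous: one must verify that in \emph{every} multiplicity-preserving bifurcation at least one homoclinic separatrix is genuinely lost without being replaced, i.e. that $\CC \neq \CC_0$ (which holds by definition of $\mathbf{MP}$, since $\mathbf{LMP}(\xi_0) \subset \Xi_d \setminus \CC_0$) forces a strict change in the H-chain structure that decreases the homoclinic count. This is exactly where Proposition \ref{canform} does the work: new homoclinics only arise by merging along an H-chain, consuming strictly more homoclinics than they produce, so the net count can only go down, and it must go down strictly because $\CC \neq \CC_0$ and (again by Lemma \ref{bdycontain}) $\CC_0 \subset \partial\CC$ rules out $h_0 = h$. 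Everything else is a direct substitution into the formula of Proposition \ref{dimcodim}.
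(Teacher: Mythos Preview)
Your first approach---$\CC_0 \subset \partial\CC$ forces $\dim_{\R}\CC_0 < \dim_{\R}\CC$---is essentially what the paper intends; the corollary is stated without proof precisely because it is meant to follow directly from Lemma~\ref{bdycontain} together with the manifold structure of Theorem~\ref{analstratastr}. If you want to make the frontier statement airtight, the cleanest route is to work in the local chart $\widetilde{F}_0:\HH^{s_0}\times V_+^{h_0}\to \mathbf{LMP}(\xi_0)$ (a local biholomorphism by Proposition~\ref{covernonsplit} and a dimension count): in those coordinates $\CC_0$ is the slice $\{\Im\widetilde\tau=0\}$, while $\CC$ is cut out by homogeneous linear equalities and strict inequalities in $\Im\widetilde\tau$ and contains a point with $\Im\widetilde\tau\neq 0$, hence its $\Im\widetilde\tau$-locus has real dimension at least~$1$.

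Your second approach, however, has a genuine gap. From Proposition~\ref{canform} you only know that \emph{one} newly formed homoclinic requires at least two originals to break. When several new homoclinics form simultaneously, they may \emph{share} broken originals: a priori, nothing in Proposition~\ref{canform} forbids $n$ new homoclinics arising from exactly $n$ broken ones (think of a putative cycle of length $n$ in the union graph). That this cannot happen is exactly the content of Proposition~\ref{nocyclesprop}, proved much later. So the sentence ``each newly formed homoclinic \dots requires at least two of the original homoclinics to break. Hence $h_0\ge h+1$'' is a non sequitur as written. Note also that the paper itself deduces the ``no rank~$0$'' statement (i.e., $h_0\neq h$) \emph{from} Corollary~\ref{lessdim}, so you certainly cannot lean on that direction of the argument. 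If you insist on the codimension-formula route, you must either import Proposition~\ref{nocyclesprop} (not circular, but forward-referencing) or abandon it in favor of the frontier argument.
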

\subsection{Rank $k$ Bifurcations}
The following notation will be used: a subscript of $0$ denotes before perturbation, and a subscript of $1$ denotes after perturbation.
\begin{definition}\label{rankkbif}
A \emph{rank $k$ bifurcation} is a bifurcation from $\xi_0\in\mathcal{C}_0$ to $\xi_1 \in \mathcal{C}_1$ such that $\dim_{\R}(\mathcal{C}_1)-\dim_{\R}(\mathcal{C}_0)=k$.
\end{definition}
\begin{remark}
The author prefers the terminology "rank"  over "codimension" of a bifurcation, since bifurcations of complex polynomial vector fields that allow splitting of multiple points (to be analyzed in a future paper) can have "rank 0" bifurcations, i.e. two loci with the same dimension can be adjacent, and using the terminology "codimension 0" would suggest that no bifurcation occurs in this case.
\end{remark}
The rank $k$ bifurcations can be loosely characterized as follows by using the codimension. Recall that for all vector fields in a combinatorial class $\CC$, $s$ is the number of strips, $h$ is the  number of homoclinic separatrices, and $m^{\ast}=\sum_i(\text{mult}(\zeta_i)-1)$, where $s, \ h, \ m^{\ast} \in \{ 0, \dots, d-1\}$. Proposition \ref{dimcodim} states that $\dim_{\R} (\CC)=2s+h$ and $\codim_{\R}(\CC)=2m^{\ast}+h$.
For analyzing multiplicity-preserving bifurcations, the codimension proves more useful. Using Proposition \ref{dimcodim}, a rank $k$ bifurcation is when $2(m^{\ast}_0-m^{\ast}_1)+(h_0-h_1)=k$. Since  multiplicities are constant, i.e. $m^{\ast}_0=m^{\ast}_1$, this simplifies to $h_0-h_1=k$. In words, the number of homoclinic separatrices before bifurcation is $k$ greater than the number of homoclinic separatrices after bifurcation. Therefore, a rank $k$ multiplicity-preserving bifurcation is such that $h \leq h_0$ homoclinic separatrices break and $h-k$ new homoclinic separatrices form.\par
The behavior can be further narrowed down by the following proposition.
\begin{proposition}
There are no rank $k \leq 0$ multiplicity-preserving bifurcations.
\end{proposition}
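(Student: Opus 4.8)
The plan is to obtain the statement as a direct consequence of Corollary \ref{lessdim} together with the codimension formula of Proposition \ref{dimcodim}. First I would pin down what is being asserted: a multiplicity-preserving bifurcation is a transition from $\xi_0\in\mathcal{C}_0$ to some $\xi_1\in\mathcal{C}_1$ with $\mathcal{C}_1\neq\mathcal{C}_0$, $\xi_1$ in an arbitrarily small neighborhood of $\xi_0$, and $\mathcal{C}_1\subset\mathbf{MP}(\xi_0)$; the rank of this bifurcation is the integer $k=\dim_{\R}(\mathcal{C}_1)-\dim_{\R}(\mathcal{C}_0)$. For such a transition to exist at all, $\xi_0$ must lie in the bifurcation locus, and in fact must possess at least one homoclinic separatrix (with no homoclinics nothing can break and, by Proposition \ref{canform}, nothing can form, so $\mathbf{LMP}(\xi_0)$ would be empty); hence the hypotheses of Lemma \ref{bdycontain}, and therefore of Corollary \ref{lessdim}, are automatically in force.

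Next I would simply chain the inequalities. Since $\dim_{\R}(\mathcal{C})=2d-2-\codim_{\R}(\mathcal{C})$, we have $k=\dim_{\R}(\mathcal{C}_1)-\dim_{\R}(\mathcal{C}_0)=\codim_{\R}(\mathcal{C}_0)-\codim_{\R}(\mathcal{C}_1)$. By Corollary \ref{lessdim}, $\codim_{\R}(\mathcal{C}_0)>\codim_{\R}(\mathcal{C}_1)$, so $k>0$; and since dimensions of combinatorial classes are integers (Proposition \ref{dimcodim} gives $\dim_{\R}(\mathcal{C})=2s+h$), the strict inequality $k>0$ upgrades to $k\geq 1$. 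It is worth recording the bookkeeping identification that makes this concrete: writing $\codim_{\R}(\mathcal{C}_i)=2m^{\ast}_i+h_i$ and using $m^{\ast}_0=m^{\ast}_1$ (the multiplicities are preserved), the above says precisely $k=h_0-h_1\geq 1$, i.e. a multiplicity-preserving bifurcation strictly decreases the number of homoclinic separatrices. This rules out $k\leq 0$ and completes the proof.

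The argument is essentially all bookkeeping once Corollary \ref{lessdim} is available, so there is no substantive obstacle inside this proposition; the one place that genuinely carries weight has already been discharged upstream, namely Lemma \ref{bdycontain}, whose proof uses Theorem \ref{analstratastr} to show that a multiplicity-preserving adjacency $\mathcal{C}_0\cap\partial\mathcal{C}\neq\emptyset$ forces $\mathcal{C}_0\subset\partial\mathcal{C}$ and hence a strict drop in codimension. Within the present proof the only things to be careful about are: (i) insisting that a bifurcation means $\mathcal{C}_0\neq\mathcal{C}_1$, so as to exclude the vacuous "rank $0$" case of a class adjacent to itself, and (ii) using integrality of $h_0$ and $h_1$ to turn $k>0$ into $k\geq 1$.
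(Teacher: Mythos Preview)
Your proof is correct and hinges on the same key fact as the paper's---Corollary \ref{lessdim}---but you streamline the argument slightly. The paper treats the two cases separately: it rules out $k<0$ by invoking the stability of landing separatrices (Theorem \ref{landingstablethm}) together with the fixed count $2(d-1)$ of asymptotic directions to conclude $h_1\leq h_0$, and then rules out $k=0$ via Corollary \ref{lessdim}. You instead observe that Corollary \ref{lessdim} alone already gives the strict inequality $k>0$, subsuming both cases at once; the separate appeal to Theorem \ref{landingstablethm} is unnecessary here, though it does give independent and more elementary intuition for why the number of homoclinics cannot increase.
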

\begin{proof}
 There can not be more homoclinic separatrices after perturbation than existed initially, i.e. $h_1 \leq h_0$. This is because there is a fixed number $2(d-1)$ of asymptotic directions and landing separatrices are stable when multiplicities are fixed by Theorem \ref{landingstablethm}.  Therefore,  there can be no rank $k < 0$ multiplicity-preserving bifurcations. There can be no rank $k=0$ multiplicity-preserving bifurcations, since Corollary \ref{lessdim} implies that the dimension of $\mathcal{C}_0$ is strictly less than the dimension of $\mathcal{C}_1$. 
\end{proof}
\subsection{Compositions of Rank 1 Bifurcations}
By Definition \ref{rankkbif}, a rank 1 bifurcation is a bifurcation from $\xi_0\in\mathcal{C}_0$ to $\xi_1 \in \mathcal{C}_1$ such that $\dim_{\R}(\mathcal{C}_1)-\dim_{\R}(\mathcal{C}_0)=1$. The multiplicity-preserving, rank 1 bifurcations are the bifurcations such that $h_0-h_1=1$, meaning that $h\leq h_0$ homoclinic separatrices break and $h-1$ new ones form. See Figures \ref{defex0}, \ref{simplehomjoin}, \ref{5to4}, and \ref{3to2} for examples.
\begin{figure}[htbp]%
\large
    \begin{pspicture}(0,0)(12,7)%
        \put(0.5,0.25){\includegraphics[width=.95\textwidth]{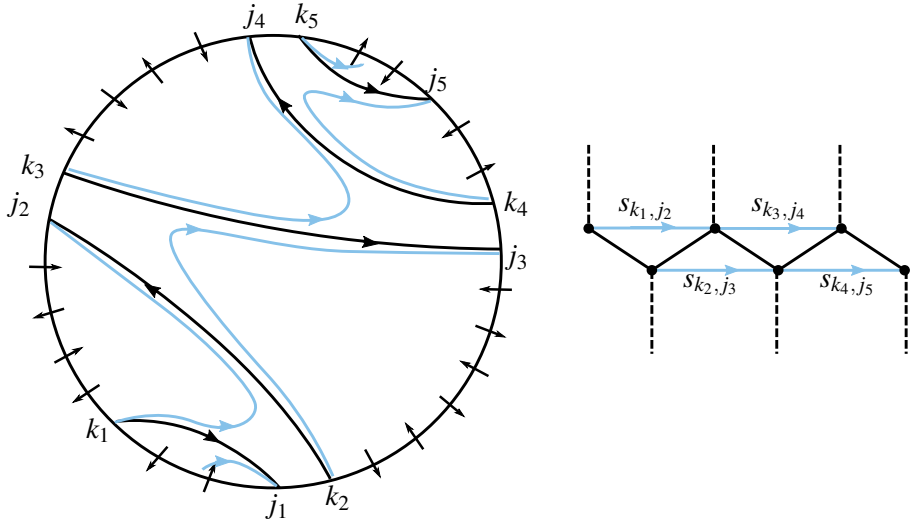}}%
        \put(1.25,1){\color[rgb]{0,0,0}\makebox(0,0)[lb]{\smash{$k_1$}}}%
        \put(3.6,0){\color[rgb]{0,0,0}\makebox(0,0)[lb]{\smash{$j_1$}}}%
        \put(4.4,0.1){\color[rgb]{0,0,0}\makebox(0,0)[lb]{\smash{$k_2$}}}%
        \put(0.2,4){\color[rgb]{0,0,0}\makebox(0,0)[lb]{\smash{$j_2$}}}%
        \put(0.4,4.5){\color[rgb]{0,0,0}\makebox(0,0)[lb]{\smash{$k_3$}}}%
        \put(6.75,3.3){\color[rgb]{0,0,0}\makebox(0,0)[lb]{\smash{$j_3$}}}%
        \put(6.75,4){\color[rgb]{0,0,0}\makebox(0,0)[lb]{\smash{$k_4$}}}%
        \put(3.3,6.5){\color[rgb]{0,0,0}\makebox(0,0)[lb]{\smash{$j_4$}}}%
        \put(4,6.5){\color[rgb]{0,0,0}\makebox(0,0)[lb]{\smash{$k_5$}}}%
        \put(5.7,5.65){\color[rgb]{0,0,0}\makebox(0,0)[lb]{\smash{$j_5$}}}%
        \put(8.25,4){\color[rgb]{0,0,0}\makebox(0,0)[lb]{\smash{$s_{k_1,j_2}$}}}%
        \put(9.1,3){\color[rgb]{0,0,0}\makebox(0,0)[lb]{\smash{$s_{k_2,j_3}$}}}%
        \put(10,4){\color[rgb]{0,0,0}\makebox(0,0)[lb]{\smash{$s_{k_3,j_4}$}}}%
        \put(10.9,3){\color[rgb]{0,0,0}\makebox(0,0)[lb]{\smash{$s_{k_4,j_5}$}}}%
    \end{pspicture}%
\caption{An example of a rank 1 bifurcation in the disk model (left) and in rectifying coordinates (right). The black arcs represent an H-chain $s_{k_1,j_1}$, $s_{k_2,j_2}$, $s_{k_3,j_3}$, $s_{k_4,j_4}$, and $s_{k_5,j_5}$ that a vector field has before perturbation.  The small black arrows on the rest of the circle indicate that there are other separatrices between, with unspecified behavior. The blue curves represent a change in the separatrix configuration after perturbation. In this example, the five homoclinic separatrices $s_{k_1,j_1}$, $s_{k_2,j_2}$, $s_{k_3,j_3}$, $s_{k_4,j_4}$, and $s_{k_5,j_5}$  break, and four new homoclinics form: $s_{k_1,j_2}$, $s_{k_2,j_3}$, $s_{k_3,j_4}$, and $s_{k_4,j_5}$. The vertical dashed lines in rectifying coordinates are not (necessarily) to be seen as identified as in Figures \ref{sumdynres}, \ref{defex0}, and \ref{simplehomjoin}. They represent  a piece of an unspecified zone that has those homoclinic separatrices on its lower or upper boundary. The same remark holds for the vertical dashed lines in Figures \ref{product},  \ref{Hchain1}, \ref{simultaneoushom}, \ref{3to2}, and \ref{Hgraphpath1}.}
\label{5to4}
\end{figure}
\begin{figure}[htbp]%
\large
    \begin{pspicture}(0,0)(12,7)%
        \put(0.5,0.25){\includegraphics[width=.9\textwidth]{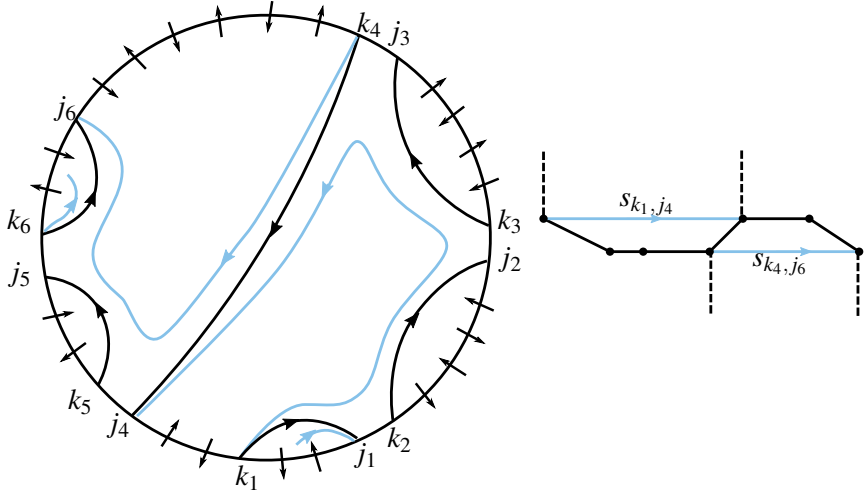}}%
        \put(3.2,0.2){\color[rgb]{0,0,0}\makebox(0,0)[lb]{\smash{$k_1$}}}%
        \put(4.75,0.5){\color[rgb]{0,0,0}\makebox(0,0)[lb]{\smash{$j_1$}}}%
        \put(5.2,0.7){\color[rgb]{0,0,0}\makebox(0,0)[lb]{\smash{$k_2$}}}%
        \put(6.65,3.1){\color[rgb]{0,0,0}\makebox(0,0)[lb]{\smash{$j_2$}}}%
        \put(6.65,3.6){\color[rgb]{0,0,0}\makebox(0,0)[lb]{\smash{$k_3$}}}%
        \put(5.2,6.1){\color[rgb]{0,0,0}\makebox(0,0)[lb]{\smash{$j_3$}}}%
        \put(4.8,6.2){\color[rgb]{0,0,0}\makebox(0,0)[lb]{\smash{$k_4$}}}%
        \put(1.5,0.9){\color[rgb]{0,0,0}\makebox(0,0)[lb]{\smash{$j_4$}}}%
        \put(1,1.2){\color[rgb]{0,0,0}\makebox(0,0)[lb]{\smash{$k_5$}}}%
        \put(0.2,2.9){\color[rgb]{0,0,0}\makebox(0,0)[lb]{\smash{$j_5$}}}%
        \put(0.2,3.6){\color[rgb]{0,0,0}\makebox(0,0)[lb]{\smash{$k_6$}}}%
        \put(0.8,5.1){\color[rgb]{0,0,0}\makebox(0,0)[lb]{\smash{$j_6$}}}%
        \put(8.25,3.9){\color[rgb]{0,0,0}\makebox(0,0)[lb]{\smash{$s_{k_1,j_4}$}}}%
        \put(10,3.1){\color[rgb]{0,0,0}\makebox(0,0)[lb]{\smash{$s_{k_4,j_6}$}}}%
    \end{pspicture}%
\caption{An example of a rank 1 bifurcation in the disk model (left) and in rectifying coordinates (right). The black arcs represent an H-chain $s_{k_1,j_1}$, $s_{k_2,j_2}$, $s_{k_3,j_3}$, $s_{k_4,j_4}$, $s_{k_5,j_5}$, and $s_{k_6,j_6}$ that a vector field has before perturbation.  The small black arrows on the rest of the circle indicate that there are other separatrices between, with unspecified behavior. The blue curves represent a change in the separatrix configuration after perturbation. In this example, the three homoclinic separatrices $s_{k_1,j_1}$, $s_{k_4,j_4}$, and $s_{k_6,j_6}$  break, and two new homoclinics form: $s_{k_1,j_4}$ and  $s_{k_4,j_6}$. There are homoclinic separatrices in the initial H-chain that did not break under perturbation: $s_{k_2,j_2}$, $s_{k_3,j_3}$, and $s_{k_5,j_5}$. The vertical dashed lines in rectifying coordinates are not (necessarily) to be seen as identified as in Figures \ref{sumdynres}, \ref{defex0}, and \ref{simplehomjoin}. They represent a piece of an unspecified zone that has those homoclinic separatrices on its lower or upper boundary. The same remark holds for the vertical dashed lines in Figures \ref{product},  \ref{Hchain1}, \ref{simultaneoushom}, \ref{5to4}, and \ref{Hgraphpath1}.}
\label{3to2}
\end{figure}
\begin{theorem}
\label{rank1comp}
Every multiplicity-preserving bifurcation can be realized as a composition of rank 1 bifurcations.
\end{theorem}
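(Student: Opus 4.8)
The plan is to prove Theorem~\ref{rank1comp} by induction on the \emph{rank} $k$ of the bifurcation, after first translating the statement into the combinatorics of deformations in rectifying coordinates. So the first step is this translation. Let $\xi_0\in\CC_0$ undergo a multiplicity-preserving bifurcation to $\xi_1\in\CC_1$. By Proposition~\ref{covernonsplit} this bifurcation is realized by a deformation in rectifying coordinates, i.e. by a perturbation of the pseudo-invariants $(\underline{\widetilde\alpha},\underline{\widetilde\tau})$ issuing from $(\underline\alpha_0,\underline\tau_0)$; and by Proposition~\ref{dimcodim} together with the discussion of rank we have $k=h_0-h_1\ge 1$, meaning exactly $h\le h_0$ homoclinic separatrices of $\xi_0$ break and $h-k$ new homoclinic separatrices form. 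By Proposition~\ref{canform} each new homoclinic $s_{k,j}$ of $\xi_1$ is assembled out of a sub-H-chain of broken homoclinics of $\xi_0$ subject to the sign and partial-sum conditions on the $\Im\widetilde\tau_i$ recorded in that proof. This package of data — which homoclinics break, which form, and the assembling sub-H-chains — recovers $\CC_1$ from $\CC_0$, and conversely, realizing any consistent such package amounts by Theorem~\ref{analstratastr} to choosing an appropriate $\underline{\widetilde\tau}$.

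Next, the induction itself. For $k=1$ there is nothing to do. For $k\ge 2$ the goal is to peel off a \emph{rank-$1$} sub-bifurcation $\CC_0\to\CC'$ (one fewer homoclinic, $h'=h_0-1$) so that the residual data, now read off of $\CC'$, defines a multiplicity-preserving bifurcation $\CC'\to\CC_1$ of rank $k-1$; then one applies the inductive hypothesis to $\CC'\to\CC_1$ and concatenates. To extract the rank-$1$ move, fix one new homoclinic $t=s_{k,j}$ of $\CC_1$, assembled along an H-chain $s_{k,j}=s_{k_1,j_1},\dots,s_{k_n,j_n}=s_{k_0,j}$ of broken homoclinics. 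The naive move ``break all $n$, form only $t$'' has rank $n-1$, so when $n\ge 3$ it must be subdivided into $n-1$ successive \emph{adjacent merges}, each replacing two consecutive homoclinics $s_{k_i,j_i},s_{k_{i+1},j_{i+1}}$ on a common zone boundary by the single homoclinic $s_{k_i,j_{i+1}}$; each adjacent merge is a rank-$1$ bifurcation, and — this is the key local fact — after a merge the resulting homoclinic occupies, in rectifying coordinates, the position vacated by $s_{k_{i+1},j_{i+1}}$, hence is again consecutive on a zone boundary with $s_{k_{i+2},j_{i+2}}$, so the $n$-chain becomes an $(n-1)$-chain of the same boundary type and the reduction continues. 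Homoclinics that break without feeding any new homoclinic are removed by single rank-$1$ ``breaking'' moves, exactly as in the breaking of one homoclinic separatrix (Example~1, Figure~\ref{defex0}). Interleaving these elementary moves in an order compatible with the partial-sum inequalities produces a chain $\CC_0=\mathcal D_0\to\mathcal D_1\to\dots\to\mathcal D_k=\CC_1$ of rank-$1$ bifurcations. Throughout, multiplicities are preserved since no equilibrium point is ever split; each $\mathcal D_i$ is a bona fide combinatorial class by Theorem~\ref{analstratastr}; and the expected closure and codimension relations $\mathcal D_i\subset\partial\mathcal D_{i+1}$ with $\codim_{\R}\mathcal D_{i+1}=\codim_{\R}\mathcal D_i-1$ hold by Lemma~\ref{bdycontain} and Corollary~\ref{lessdim}.

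The hard part will be Step~2: verifying that the combinatorial data of a rank-$k$ bifurcation genuinely factors through a rank-$1$ move followed by a rank-$(k-1)$ move. Three points demand care: (i) that an H-chain can indeed be shortened ``one link at a time'' by adjacent merges without destroying the consecutivity relations needed for the remaining links — this should become transparent by performing the piecewise-linear deformation of the rectified zones \emph{in stages} rather than all at once, so that one literally watches a single link collapse; (ii) that the residual package of data, rewritten in terms of $\CC'$, still meets the hypotheses of Proposition~\ref{canform}, so that $\CC'\to\CC_1$ is realizable; and (iii) choosing the order of the elementary moves compatibly with the partial-sum conditions so that no forming homoclinic leaves its zone prematurely. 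I expect (i)--(iii) to be cleanest to handle by working entirely in rectifying coordinates and carrying out the whole deformation in $k$ stages, each stage moving a single block of $\widetilde\tau$-parameters; the bookkeeping of how the H-chain structure updates after each stage is the technical heart of the argument.
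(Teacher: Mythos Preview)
Your approach is genuinely different from the paper's, and considerably more laborious. The paper gives a short, non-constructive dimension argument: for a rank-$k$ bifurcation $\CC_0\to\CC$ with $k>1$, one observes that $\CC_0$ has real codimension at least $2$ inside $\partial\CC$ near $\xi_0$, hence cannot be dense there; so some other class $\CC_1$ must meet $\partial\CC$ in every neighborhood of $\xi_0$, giving $\xi_0\in\partial\CC_1$. Lemma~\ref{nonsplitclosed} then forces $\CC_1\subset\mathbf{MP}(\xi_0)$, Lemma~\ref{bdycontain} gives $\CC_1\subset\partial\CC$, and Corollary~\ref{lessdim} places $\dim_{\R}\CC_1$ strictly between $\dim_{\R}\CC_0$ and $\dim_{\R}\CC$. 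Iterating produces the chain. No H-chain combinatorics enter; that machinery is developed only afterward, in Section~\ref{characterizerank1}.

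Your constructive route---building the intermediate classes by adjacent merges along H-chains---would, if completed, yield more: it names the $\mathcal D_i$ explicitly and anticipates the structure theorem for rank-$1$ bifurcations. But the verifications you defer are not routine, and one of them is more serious than you indicate. Distinct new homoclinics of $\CC_1$ can share broken homoclinics of $\CC_0$ in their assembling H-chains (precisely the shared-vertex situation of a union graph, cf.\ Definition~\ref{uniongraph} and Figure~\ref{Hgraphpath1}). If you first merge along one path through a shared vertex $s_{k,j}$, that homoclinic is consumed; the H-chain for the second path must then be rewritten in the new class $\CC'$, and it is not immediate that the rewritten chain still satisfies the partial-sum inequalities of Proposition~\ref{canform}, nor that a globally consistent ordering of merges exists. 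Your ``interleaving\dots\ compatible with the partial-sum inequalities'' gestures at this but does not resolve it. The issue is surmountable---the acyclicity established in Proposition~\ref{nocyclesprop} is the right tool---but it makes your argument depend on results the paper proves only \emph{after} Theorem~\ref{rank1comp}, whereas the paper's topological argument needs none of it.
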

The idea of the proof is to show that every multiplicity-preserving bifurcation is accessible through a sequence of rank 1, multiplicity-preserving bifurcations. To do this, we will show the existence of a sequence of classes such that
\begin{enumerate}
\item all are contained in the multiplicity-preserving set,
\item each class is contained in the boundary of the next, and
\item each differs in dimension by 1 (see Figure \ref{cube}).
\end{enumerate}
\begin{figure}[htbp]%
\large
    \begin{pspicture}(0,0)(12,3)%
        \put(4,0){\includegraphics[width=4cm]{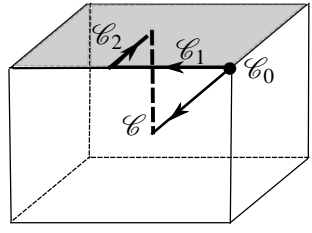}}%
        \put(5.5,1.2){\color[rgb]{0,0,0}\makebox(0,0)[lb]{\smash{$\mathcal{C}$}}}%
        \put(7.1,1.95){\color[rgb]{0,0,0}\makebox(0,0)[lb]{\smash{$\mathcal{C}_0$}}}%
        \put(6.2,2.2){\color[rgb]{0,0,0}\makebox(0,0)[lb]{\smash{$\mathcal{C}_1$}}}%
        \put(5.1,2.4){\color[rgb]{0,0,0}\makebox(0,0)[lb]{\smash{$\mathcal{C}_2$}}}%
    \end{pspicture}
   \caption{The idea of the proof of Theorem \ref{rank1comp} is to show that for every $\mathcal{C}\subset \mathbf{MP}(\xi_0)$, there exists a sequence of classes $\mathcal{C}_i, \quad i=1,\dots,k-1$ such that $\mathcal{C}_i \subset \mathbf{MP}(\xi_0)$ and $\partial \mathcal{C}\supset \mathcal{C}_{k-1}, \dots, \partial \mathcal{C}_2\supset \mathcal{C}_1, \partial \mathcal{C}_1 \supset \mathcal{C}_0$ and $\mathcal{C}_{i-1}$ to $\mathcal{C}_{i}$ and $\mathcal{C}_{k-1}$ to $\mathcal{C}$ are rank 1 bifurcations. The figure is schematic only and not representative of the actual geometry of the combinatorial classes. }
 \label{cube}
\end{figure}
We first prove that if $\mathcal{C}\subset \mathbf{MP}(\xi_0)$, then all of $\mathcal{C}$'s relevant boundary components are also in the multiplicity-preserving set $\mathbf{MP}(\xi_0)$.
\begin{lemma}
\label{nonsplitclosed}
For all $\mathcal{C}\subset \mathbf{MP}(\xi_0)$, it holds that every component $\mathcal{C}_1$ of $\partial \mathcal{C}$ with $\xi_0\in \partial \mathcal{C}_1$, $\mathcal{C}_1\subset \mathbf{MP}(\xi_0)$.
\end{lemma}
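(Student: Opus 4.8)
The plan is to show that, near $\xi_0$, the part of $\partial\mathcal{C}$ that matters is caught inside the \emph{multiplicity-preserving locus of $\xi_0$}, a closed subset of a small ball around $\xi_0$; once that is known, any combinatorial class meeting this ball off $\mathcal{C}_0$ automatically belongs to $\mathbf{MP}(\xi_0)$.

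Concretely, I would first fix a ball $B\subset\Xi_d$ about $\xi_0$, contained in the neighborhood on which $\mathbf{LMP}(\xi_0)$ is defined and small enough that for every $\xi\in B$ the $d$ zeros of $P_\xi$ (with multiplicity) break into clusters, exactly $m_i:=\mathrm{mult}(\zeta_i^0)$ of them lying in a fixed small disk $D_i$ about each of the $N$ distinct equilibria $\zeta_i^0$ of $\xi_0$ (so $\sum_i m_i=d$). Let $M\subset B$ be the set of $\xi\in B$ such that in each $D_i$ the cluster is a single zero of order $m_i$. Then (i) $M$ is closed in $B$: writing $P_\xi=(z-w_i)^{m_i}g_i(z)$ on $D_i$ with $g_i$ nonvanishing on $\overline{D_i}$, a locally uniform limit of such polynomials is again of this form by Hurwitz's theorem, so the condition defining $M$ is a closed one; and (ii) $M\setminus\mathcal{C}_0\subset\mathbf{LMP}(\xi_0)$ straight from the definitions, since a vector field in $M$ has exactly $N$ distinct equilibria with the same multiplicities as $\xi_0$. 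I would also record that the multiplicity of each equilibrium point is constant on a combinatorial class — it is read off the labelled separatrix graph via the $2(\mathrm{mult}-1)$ elliptic sectors, as already used in Proposition \ref{dimcodim}.

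The next step is $\mathcal{C}\cap B\subset M$. Since $\mathcal{C}\subset\mathbf{MP}(\xi_0)$, the class $\mathcal{C}$ contains some $\xi'\in\mathbf{LMP}(\xi_0)$, whose equilibria have multiplicities $m_1,\dots,m_N$; as multiplicity is constant on $\mathcal{C}$, every $\xi\in\mathcal{C}$ has equilibria of multiplicities $m_1,\dots,m_N$, and for $\xi\in B$ these sit one per disk $D_i$, so $\xi\in M$. (Note $\mathcal{C}\ne\mathcal{C}_0$, because $\mathbf{LMP}(\xi_0)\subset\Xi_d\setminus\mathcal{C}_0$, hence $\mathcal{C}\cap\mathcal{C}_0=\emptyset$.) Because $M$ is closed in $B$, this yields $\overline{\mathcal{C}}\cap B\subset M$, hence $\partial\mathcal{C}\cap B\subset M$. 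Now let $\mathcal{C}_1$ be a component of $\partial\mathcal{C}$ with $\xi_0\in\partial\mathcal{C}_1$. Then $\xi_0\notin\mathcal{C}_1$, so $\mathcal{C}_1\ne\mathcal{C}_0$ and $\mathcal{C}_1\cap\mathcal{C}_0=\emptyset$, while $\xi_0\in\overline{\mathcal{C}_1}$ forces $\mathcal{C}_1\cap B\ne\emptyset$; picking $\xi_1\in\mathcal{C}_1\cap B$ gives $\xi_1\in M\setminus\mathcal{C}_0\subset\mathbf{LMP}(\xi_0)$, so $\mathcal{C}_1\cap\mathbf{LMP}(\xi_0)\ne\emptyset$, i.e. $\mathcal{C}_1\subset\mathbf{MP}(\xi_0)$. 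One may alternatively run the two middle steps through Theorem \ref{analstratastr}: near $\xi_0$ the classes of $\mathbf{MP}(\xi_0)\cup\{\mathcal{C}_0\}$ pull back under the pseudo-invariant map $\widetilde{F}$ to a conical stratification with apex $\xi_0$, because the combinatorial type depends only on the relative imaginary parts of $(\underline{\widetilde{\alpha}},\underline{\widetilde{\tau}})$; a boundary component whose closure contains $\xi_0$ is then a face of that cone, hence one of these strata, hence — being $\ne\mathcal{C}_0$ — a class in $\mathbf{MP}(\xi_0)$.

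I expect the crux to be step (i) together with the upgrade $\mathcal{C}\subset\mathbf{MP}(\xi_0)\Rightarrow\mathcal{C}\cap B\subset M$: one must use that multiplicities can only \emph{merge} under a limit, never split, in order to see that the prescribed-cluster locus $M$ is genuinely closed near $\xi_0$, and that membership in $\mathbf{MP}(\xi_0)$ — a statement about a single vector field close to $\xi_0$ — propagates to all of $\mathcal{C}\cap B$ because multiplicity is a combinatorial invariant. The remainder is bookkeeping with the definitions of $\mathbf{LMP}$ and $\mathbf{MP}$.
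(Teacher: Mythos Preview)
Your argument is correct and rests on the same observation the paper uses: near $\xi_0$ the number of distinct equilibria can only increase, never decrease, under perturbation (equivalently, your locus $M$ is closed in $B$). The paper packages this as a two-line contrapositive---if $\mathcal{C}_1$ were in the splitting set of $\xi_0$ it would have more distinct zeros than $\xi_0$, and since $\mathcal{C}$ accumulates on $\mathcal{C}_1$, so would $\mathcal{C}$---whereas you build the closed set $M$ explicitly via Hurwitz and argue directly that $\partial\mathcal{C}\cap B\subset M$. Your route is more carefully quantified (the disks $D_i$, the constancy of multiplicity along a combinatorial class) and makes the closedness step explicit, which is pedagogically cleaner; the paper's version is terser but relies on the reader supplying exactly the upper-semicontinuity of the distinct-root count that you prove. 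Substantively the two arguments are the same.
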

\begin{proof}
It will prove to be almost trivially true by the observation that equilibrium points can not coalesce under small perturbation.
The contrapositive is proved: for every $\mathcal{C} \subset \Xi_d$ with $\xi_0 \in \partial \mathcal{C}$, and for every $\mathcal{C}_1$ such that $\mathcal{C}_1 \cap \partial \mathcal{C}\neq \emptyset$ and $\xi_0\in \partial \mathcal{C}_1$, if $\mathcal{C}_1 \not\subset \mathbf{MP}(\xi_0)$, then $\mathcal{C} \not\subset \mathbf{MP}(\xi_0)$. Assume $\mathcal{C}_1$ intersects the boundary of $\mathcal{C}$, and they both have $\xi_0$ in the boundary.  If $\mathcal{C}_1$ is in the splitting set of $\xi_0$, then it means that any vector field in $\mathcal{C}_1$ has more zeros than $\xi_0$. Since $\mathcal{C}_1$ intersects (or is contained in) the boundary of $\mathcal{C}$, then there are points of $\mathcal{C}$ arbitrarily close to  points in $\mathcal{C}_1$. Hence the number of zeros for a vector field in $\mathcal{C}$ must be greater than or equal to the number of zeros of a vector field in $\mathcal{C}_1$ since equilibrium points can not merge under perturbation. Therefore, the number of zeros in $\mathcal{C}$ are greater than the number in $\xi_0$, proving that $\mathcal{C}$ belongs to the splitting set of $\xi_0$. 
\end{proof}
Now we can finish the main proof.
\begin{proof}[Proof of Theorem \ref{rank1comp}]
We will show that for every $\mathcal{C}\subset \mathbf{MP}(\xi_0)$, there exists a sequence of classes $\mathcal{C}_i$,  $i=1,\dots,k-1$ such that $\mathcal{C}_i \subset \mathbf{MP}(\xi_0)$ and $\partial \mathcal{C}\supset \mathcal{C}_{k-1}, \dots, \partial \mathcal{C}_2\supset \mathcal{C}_1, \partial \mathcal{C}_1 \supset \mathcal{C}_0$ and $\mathcal{C}_{i-1}$ to $\mathcal{C}_{i}$  and $\mathcal{C}_{k-1}$ to $\mathcal{C}$ are rank 1 bifurcations. Specifically, it will be proved that
for every $\mathcal{C}$ with $\xi_0 \in \partial \mathcal{C}$ and $\dim_{\R}(\mathcal{C})-\dim_{\R}(\mathcal{C}_0)=k$, $k>1$, there exists $\mathcal{C}_1\subset \mathbf{MP}(\xi_0)$ such that:
\begin{enumerate}
\item[1.] $\dim_{\R}(\mathcal{C})>\dim_{\R}(\mathcal{C}_1)>\dim_{\R}(\mathcal{C}_0)$, and
\item[2.] $\mathcal{C}_1 \subset \partial \mathcal{C}$.
\end{enumerate}
Let $V_{0}$ be  a neighborhood of $\xi_0$ in $\Xi_d$. The set $\mathcal{C}_0$ can not be a dense subset of $\partial \mathcal{C} \cap V_{0}$ by the assumption that $\dim_{\R}(\mathcal{C})-\dim_{\R}(\mathcal{C}_0)>1$.
There must therefore be some other $\mathcal{C}_1$ such that $\partial \mathcal{C} \cap \mathcal{C}_1\neq \emptyset$ and $\xi_0 \in \partial \mathcal{C}_1$.
Next, $\mathcal{C}_1\subset \mathbf{MP}(\xi_0)$ by Lemma \ref{nonsplitclosed},  and item 2. follows from Lemma \ref{bdycontain}. Finally, $\mathcal{C}_1\subset \partial \mathcal{C}$ implies $\dim_{\R}(\mathcal{C}_1)<\dim_{\R}(\mathcal{C})$ by Corollary \ref{lessdim}. 
\end{proof}
\section{Characterization of the Rank 1 Multiplicity-preserving Bifurcations}
\label{characterizerank1}
The result of Theorem \ref{rank1comp} is not helpful if the rank 1 bifurcations can be arbitrarily complicated. We characterize the rank 1 bifurcations in this section to show that this is not the case. It will be proved that all rank 1 bifurcations are of essentially the same type as the examples in Figures \ref{5to4} and \ref{3to2}. We first characterize that type.
\begin{definition}
A \emph{chained homoclinic breaking} is a bifurcation where a sequence $s_{k_1,j_1},...,s_{k_n,j_n}$ of $n\geq 1$ homoclinic separatrices  break such that
\begin{enumerate}
  \item for each $i=1,\dots,n-1$, $s_{k_i,j_i}$ and $s_{k_{i+1},j_{i+1}}$ are on the boundary of the same zone before perturbation,
  \item $n-1$ homoclinic separatrices form: $s_{k_1,j_2},s_{k_2,j_3}, ... , s_{k_{n-1},j_n}$,
  \item $s_{j_1}$ lands at the $\alpha$-limit (equilibrium) point on the boundary of the zone to the right of $s_{k_1,j_1}$, and $s_{k_n}$ lands at the $\omega$-limit (equilibrium) point on the boundary of the zone to the left of $s_{k_n,j_n}$ if $\widetilde{\tau}_{k_1,j_1}\in \mathbb{H}_-$ (similar for $\widetilde{\tau}_{k_1,j_1}\in \mathbb{H}_+$), and
  \item the separatrix graph is otherwise unchanged.
\end{enumerate}
\end{definition}
Interestingly, one can also see that this type of bifurcation inserts a strip zone with the new homoclinics on its boundary (see Figures \ref{5to4} and \ref{3to2}). \par
We can now present the second main theorem of this paper.
\begin{theorem}
\label{rank1characterization}
Every rank 1, multiplicity-preserving  bifurcation is 
a chained homoclinic breaking.
\end{theorem}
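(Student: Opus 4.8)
The plan is to carry out the whole argument in rectifying coordinates. By Proposition~\ref{covernonsplit} every multiplicity-preserving bifurcation, in particular every rank~$1$ one, is realized by a deformation of the building blocks sending the invariants $(\underline{\alpha}_0,\underline{\tau}_0)$ to pseudo-invariants $(\underline{\widetilde{\alpha}},\underline{\widetilde{\tau}})$, and by Theorem~\ref{analstratastr} together with Lemma~\ref{bdycontain} the combinatorial type of the perturbed $\xi_1$ is governed only by the imaginary parts $y_i:=\Im\widetilde{\tau}_i$ (perturbing the $\widetilde{\alpha}_i$ or the real parts of the $\widetilde{\tau}_i$ keeps $\xi_1$ inside its class). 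Since $m^\ast$ is preserved, rank~$1$ means exactly $h_0-h_1=1$ by Proposition~\ref{dimcodim} and the rank discussion, so a set $B=\{s_{k_1,j_1},\dots,s_{k_h,j_h}\}$ of $h\ge 1$ homoclinics breaks and precisely $h-1$ new homoclinics form. The first point to record is a rigidity statement: by Theorem~\ref{landingstablethm} every asymptotic direction carrying a landing separatrix for $\xi_0$ still carries one for $\xi_1$, so the $h-1$ new homoclinics can only recombine among the $2h$ separatrices $\{s_{k_i}\}\cup\{s_{j_i}\}$ freed by the breaking of $B$; in particular each new homoclinic has the form $s_{k_a,j_b}$ with $s_{k_a,j_a},s_{k_b,j_b}\in B$.

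The core step is to determine the recombination pattern. I would form the directed graph $\Gamma$ on vertex set $B$ with an edge $s_{k_a,j_a}\to s_{k_b,j_b}$ whenever the new homoclinic $s_{k_a,j_b}$ forms. Because an asymptotic direction supports at most one separatrix, every vertex of $\Gamma$ has out-degree and in-degree at most $1$, so $\Gamma$ is a disjoint union of simple directed paths and simple directed cycles; since it has $h$ vertices and $h-1$ edges, comparing vertices to edges component by component shows that exactly one component is a path (possibly a single vertex) and all others are cycles. Cycles are then excluded: a directed cycle forces, through Proposition~\ref{canform}, a closed system of sign conditions on the prefix sums $\sum\Im\widetilde{\tau}$ along the connecting H-chains, and a short case analysis on the itinerary $I$ shows these cannot be simultaneously satisfied (for a cycle of homoclinics adjacent on one center-zone boundary, for instance, one obtains $y_m<0$ together with $y_m+y_{m+1}=0$ for all $m$ cyclically, which is impossible). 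Hence $\Gamma$ is a single Hamiltonian directed path. Relabelling $B$ along this path as $s_{k_1,j_1},\dots,s_{k_n,j_n}$ with $n=h$, the new homoclinics are exactly $s_{k_1,j_2},s_{k_2,j_3},\dots,s_{k_{n-1},j_n}$, which is item~2 of the definition; the only two freed separatrices not consumed by a new homoclinic are $s_{j_1}$ and $s_{k_n}$, and by the single-homoclinic-breaking analysis of Section~\ref{deformationssection} these land at the equilibrium points on the boundaries of the extremal zones of the H-chain exactly as in item~3, with the left/right and $\alpha$/$\omega$ roles determined by the sign of $\widetilde{\tau}_{k_1,j_1}$ (compare Figures~\ref{5to4} and~\ref{3to2}).

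It remains to establish items~1 and~4. For item~1, fix an edge $s_{k_i,j_i}\to s_{k_{i+1},j_{i+1}}$ and the H-chain it produces via Proposition~\ref{canform}; every homoclinic lying strictly between $s_{k_i,j_i}$ and $s_{k_{i+1},j_{i+1}}$ in that chain is not in $B$ (they are consecutive elements of $B$ along $\Gamma$, the newly forming separatrix $s_{k_i}$ stopping at the first freed incoming end it meets), hence has $\Im\widetilde{\tau}\equiv 0$; therefore the prefix sum $\sum\Im\widetilde{\tau}$ is constant along the intermediate portion of the chain, and the sign conditions of Proposition~\ref{canform} then prohibit any sign change of the itinerary there, i.e.\ the chain makes no zone transition, so $s_{k_i,j_i}$ and $s_{k_{i+1},j_{i+1}}$ lie on the boundary of one common zone. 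Item~4 is then immediate: no homoclinic outside $B$ breaks by the definition of $B$, landing separatrices not attached to $B$ are fixed by Theorem~\ref{landingstablethm}, multiplicities are unchanged by hypothesis, and the only structural changes are the ones already listed; this also exhibits the new configuration as the one obtained by inserting a single strip carrying the new homoclinics on its boundary. The main obstacle is the cycle-exclusion step, together with the closely related claim that the intermediate homoclinics in each connecting H-chain are genuinely frozen — that a rank~$1$ deformation cannot ``route'' a new separatrix past an already-broken homoclinic; once these are pinned down, the rest is bookkeeping in rectifying coordinates.
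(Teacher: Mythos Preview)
Your route is genuinely different from the paper's. Both encode the rank~$1$ bifurcation by a directed graph on the broken homoclinics and reduce the theorem to showing this graph is a single directed path, but the graphs are not the same: the paper's \emph{union graph} has an edge for each zone traversed, so a new homoclinic corresponds to an admissible \emph{path} (one edge per zone), whereas your $\Gamma$ contracts each such path to a single edge. Your choice buys the degree bound (in/out $\le 1$) and the vertex/edge count for free, while the paper must separately argue connectedness, acyclicity (Proposition~\ref{nocyclesprop}), and the two-leaf count before concluding its graph is a path; on the other hand, the paper's graph keeps track of the intermediate broken homoclinics along each H-chain, which turns out to be exactly what is needed for item~1.

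The two obstacles you flag are the two substantive steps of the paper. Your sketch of cycle exclusion is the special case of Proposition~\ref{nocyclesprop} where the cycle lives on a single center-zone boundary; the general case (Case~II there) needs the partial-sum argument for two ``contrary'' edges and is not as short as you suggest. The more serious gap is in your item~1 argument. The parenthetical justification---that intermediates in the H-chain are not in $B$ because $s_{k_i,j_i}$ and $s_{k_{i+1},j_{i+1}}$ are ``consecutive elements of $B$ along $\Gamma$'' and ``$s_{k_i}$ stops at the first freed incoming end it meets''---is circular: $\Gamma$-adjacency is not H-chain adjacency, and which incoming end is ``first freed'' depends on the signs of all the $\Im\widetilde\tau_j$ along the chain, including those of any other elements of $B$ that might sit there with their $j$-direction consumed by some other edge of $\Gamma$. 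The paper sidesteps this because its union graph retains such intermediate broken homoclinics as vertices; once the union graph is shown to be a directed path on $n+1$ vertices carrying $n$ admissible sub-paths with no shared start or end, a short orientation argument forces every sub-path to have length~$1$, which is exactly item~1. To close your argument you would need either to import that step or to prove directly, via the sign conditions of Proposition~\ref{canform}, that an intermediate element of $B$ in the connecting H-chain is incompatible with the remaining edges of $\Gamma$.
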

In order to prove this theorem, we will build new objects: homoclinic graphs (\emph{H-graphs}, for short). These graphs are to be defined such that an admissible path in the H-graph corresponds to a homoclinic separatrix that can appear under  perturbation, and the vertices of this path correspond to the homoclinic separatrices that \emph{must} break to form that new homoclinic.
\subsection{H-graphs}
H-graphs are directed graphs, based on the initial homoclinic separatrix configuration, to help organize the possible multiplicity-preserving bifurcations.  They are to be defined such that:
\begin{itemize}
  \item an \emph{admissible path} (to be defined) corresponds to a homoclinic separatrix that can appear under  perturbation, and
  \item the vertices of this path correspond to the homoclinic separatrices that \emph{must} break in order for the new homoclinic to form.
\end{itemize}
E.g.  a single admissible  path using $k+1$ vertices corresponds to a rank $k$ bifurcation, since $k+1$ homoclinics break and only one new one forms.
\begin{figure}[htbp]%
\large
    \begin{pspicture}(0,0)(12,6)%
        \put(0.3,0){\includegraphics[width=.95\textwidth]{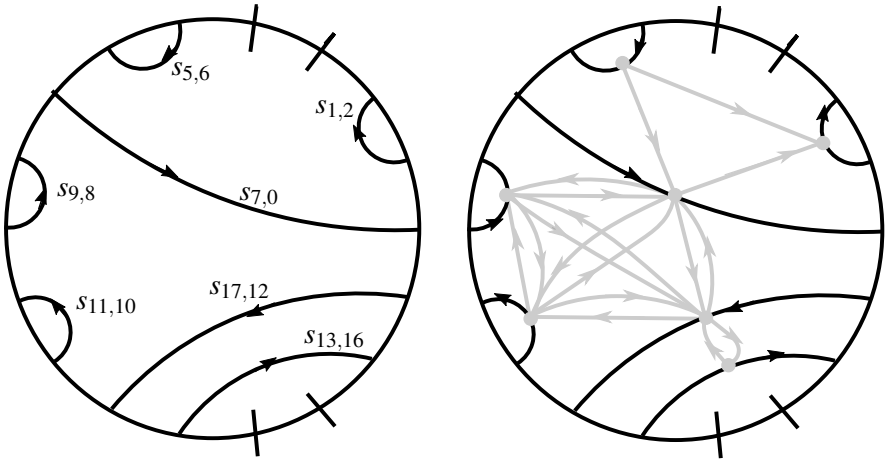}}%
        \put(3.4,3.45){\color[rgb]{0,0,0}\makebox(0,0)[lb]{\smash{$s_{7,0}$}}}%
        \put(4.4,4.6){\color[rgb]{0,0,0}\makebox(0,0)[lb]{\smash{$s_{1,2}$}}}%
        \put(2.5,5.1){\color[rgb]{0,0,0}\makebox(0,0)[lb]{\smash{$s_{5,6}$}}}%
        \put(1,3.5){\color[rgb]{0,0,0}\makebox(0,0)[lb]{\smash{$s_{9,8}$}}}%
        \put(1.25,2){\color[rgb]{0,0,0}\makebox(0,0)[lb]{\smash{$s_{11,10}$}}}%
        \put(3,2.2){\color[rgb]{0,0,0}\makebox(0,0)[lb]{\smash{$s_{17,12}$}}}%
        \put(4.25,1.55){\color[rgb]{0,0,0}\makebox(0,0)[lb]{\smash{$s_{13,16}$}}}%
    \end{pspicture}
   \caption{(Left) The initial separatrix configuration in the disk model; only the homoclinic separatrices are labelled. (Right) The H-graph embedded in the disk model. Looking at the component with $s_{5,6}$, $s_{7,0}$, and $s_{1,2}$ on the boundary, there is an edge directed from $s_{5,6}$ to $s_{1,2}$ since there is an H-chain $s_{5,6}$, $s_{7,0}$, $s_{1,2}$ which contains both and where $s_{5,6}$ comes before $s_{1,2}$. For this same component, there is not an edge directed from $s_{1,2}$ to $s_{5,6}$ since there is no H-chain   which contains both where $s_{1,2}$ comes before $s_{5,6}$. Looking at the component in the middle,  there are two directed edges between $s_{7,0}$ and $s_{17,12}$ since there is an H-chain $s_{7,0}$, $s_{17,12}$ which gives the edge $s_{7,0}\rightarrow s_{17,12}$, and there is an H-chain $s_{17,12}$, $s_{11,10}$, $s_{9,8}$, $s_{7,0}$ which gives the edge $s_{17,12}\rightarrow s_{7,0}$.  }
 \label{Hgraph}
\end{figure}
\begin{definition}
An \emph{H-graph} (see Figure \ref{Hgraph}) corresponding to a combinatorial class $\mathcal{C}$ is a directed graph which is embedded in the combinatorial disk model such that:
\begin{enumerate}
  \item The vertices of the H-graph correspond to the homoclinic separatrices for $\mathcal{C}$.
  \item Each edge is contained in a connected component of the disk minus the homoclinic separatrices
  \item There is an edge directed from $v_1$ to $v_2$ if there is an H-chain containing both corresponding homoclinic separatrices, where the homoclinic for $v_1$ comes before the homoclinic for $v_2$.
\end{enumerate}
\end{definition}
\begin{remark}\label{directionofedges}
Within a connected component that does not correspond to a cylinder, all of the directed edges of the H-graph in that component must run in the same direction as the H-chain on the boundary (see the component in Figure \ref{Hgraph}  with $s_{5,6}$, $s_{7,0}$, and $s_{1,2}$ on the boundary).  In a component that does correspond to a cylinder, some edges in the H-graph may run contrary to the orientation of the H-chain, since one can 'wrap around' the cylinder (see Figure \ref{Hgraph} where the directed edge $s_{7,0}\rightarrow s_{17,12}$ runs in the same direction as the H-chain along the boundary, and the directed edge $s_{17,12}\rightarrow s_{7,0}$ runs contrary to the orientation of the H-chain on the boundary).
\end{remark}
\paragraph{Admissible Paths} Consider the case where a homoclinic separatrix can form from homoclinics on the boundary of a single zone.  Though more homoclinics \emph{may} break under perturbation, only two are \emph{required} to break for this new homoclinic to form (see Figure \ref{3to2}). If a homoclinic forms from an H-chain that involves more than one zone, two homoclinic separatrices from each zone are required to break since in total: the start homoclinic separatrix, the end homoclinic separatrix, and all the homoclinic separatrices in between that join the consecutive zones (one to enter the zone, one to leave the zone) must break (see Figures \ref{Hchain1} and \ref{simultaneoushom}).  These observations lead us to define \emph{admissible} paths in the H-graph (see Figure \ref{Hgraphpath1}).  \par
\begin{definition}
\label{admissiblepath}
  An \emph{admissible path} is a path in the H-graph such that it respects the orientation of the edges of the H-graph and uses only one edge per connected component of the disk minus the homoclinic separatrices.
\end{definition}
E.g. the path in Figure \ref{Hgraph} from $s_{5,6} \rightarrow s_{7,0} \rightarrow s_{1,2}$ would not be an admissible path since it uses two edges in a single connected component.
\begin{figure}[htbp]%
\large
    \begin{pspicture}(0,0)(12,7)%
        \put(0.3,0){\includegraphics[width=.95\textwidth]{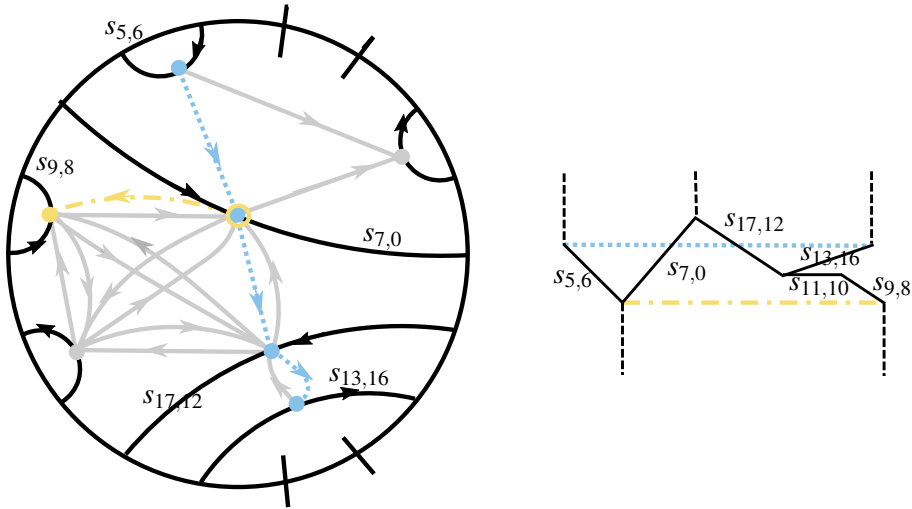}}%
        \put(5,3.55){\color[rgb]{0,0,0}\makebox(0,0)[lb]{\smash{$s_{7,0}$}}}%
        \put(1.6,6.3){\color[rgb]{0,0,0}\makebox(0,0)[lb]{\smash{$s_{5,6}$}}}%
        \put(0.7,4.5){\color[rgb]{0,0,0}\makebox(0,0)[lb]{\smash{$s_{9,8}$}}}%
        \put(2.1,1.4){\color[rgb]{0,0,0}\makebox(0,0)[lb]{\smash{$s_{17,12}$}}}%
        \put(4.55,1.7){\color[rgb]{0,0,0}\makebox(0,0)[lb]{\smash{$s_{13,16}$}}}%
        \put(7.5,3){\color[rgb]{0,0,0}\makebox(0,0)[lb]{\smash{$s_{5,6}$}}}%
        \put(9,3.1){\color[rgb]{0,0,0}\makebox(0,0)[lb]{\smash{$s_{7,0}$}}}%
        \put(9.75,3.75){\color[rgb]{0,0,0}\makebox(0,0)[lb]{\smash{$s_{17,12}$}}}%
        \put(10.75,3.3){\color[rgb]{0,0,0}\makebox(0,0)[lb]{\smash{$s_{13,16}$}}}%
        \put(10.6,2.9){\color[rgb]{0,0,0}\makebox(0,0)[lb]{\smash{$s_{11,10}$}}}%
        \put(11.7,2.9){\color[rgb]{0,0,0}\makebox(0,0)[lb]{\smash{$s_{9,8}$}}}%
    \end{pspicture}
\caption{Two \emph{admissible} paths (see Definition \ref{admissiblepath}) in the H-graph (left) and the corresponding homoclinic separatrices in rectifying coordinates (right). The union of these two paths is also a valid union graph, since the two paths neither begin or end at the same vertex, and the direction through each vertex is well-defined (see Definition \ref{uniongraph}). The vertical dashed lines in rectifying coordinates are not (necessarily) to be seen as identified as in Figures \ref{sumdynres}, \ref{defex0}, and \ref{simplehomjoin}. They represent  a piece of an unspecified zone that has those homoclinic separatrices on its lower or upper boundary. The same remark holds for the vertical dashed lines in Figures \ref{product},  \ref{Hchain1}, \ref{simultaneoushom}, \ref{5to4}, and \ref{3to2}. }
\label{Hgraphpath1}
\end{figure}
\paragraph{Simultaneous Paths} The H-graph was designed so that paths correspond to single homoclinic separatrices that can appear under  perturbation.
To consider the possibility when more than one homoclinic may form at once, \emph{simultaneous} paths in the H-graph need to be considered (see Figure \ref{Hgraphpath1}). Union path graphs are defined as allowed unions of admissible paths.
\begin{definition}
\label{uniongraph}
A \emph{union graph} is a union  of admissible paths such that:
\begin{itemize}
  \item
    No pair of paths can share a start or end vertex, i.e. there can not be a vertex which is a start vertex for two paths or an end vertex for two paths (see Figure \ref{conflict}).
  \item
      Each vertex in the union of these admissible paths has a well-defined direction through it, explained further in the following. Consider the vertex in question of the union of admissible paths. This vertex corresponds to a homoclinic separatrix in the initial configuration, and there are two connected components (zones) in the disk that have this homoclinic on the boundary.  By "well-defined direction" through the vertex, we mean that in one connected component, all edges of the union of paths must be incoming to the vertex, and in the other connected component, all edges of the union of paths must be outgoing from that vertex (see Figure \ref{directionflow} for a violation of this criterion).
      \end{itemize}
\end{definition}
We explain the need for the two conditions in the definition. If two paths shared a start (resp. end) vertex, it would be the same as saying $s_{k,j_1}$ and $s_{k,j_2}$ (resp. $s_{k_1,j}$ and $s_{k_2,j}$) could form simultaneously, but it is not possible for two homoclinic separatrices to share an asymptotic direction. If the vertices do not have a well-defined direction through them, it corresponds to a single $\widetilde{\tau}$ taking on a value in $\mathbb{H}_+$ and $\mathbb{H}_-$ simultaneously, which is not possible (see Figures \ref{directionflow} and \ref{conflict}).
\begin{remark}
  We do not claim that every union graph defined as above corresponds to a possible simultaneous formation of homoclinic separatrices, only that a simultaneous formation of homoclinics must be of that form.
\end{remark}
\begin{figure}[htbp]%
\large
    \begin{pspicture}(0,0)(12,3.5)%
        \put(0.3,0){\includegraphics[width=.45\textwidth]{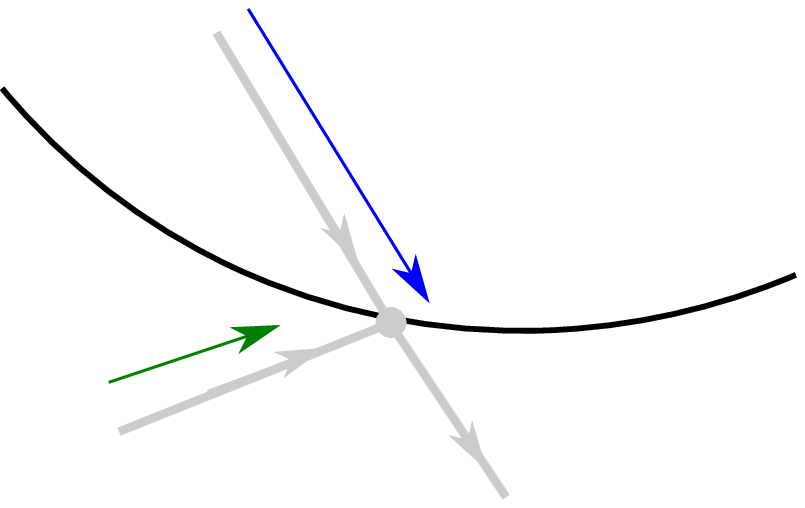}}%
        \put(6.3,0){\includegraphics[width=.45\textwidth]{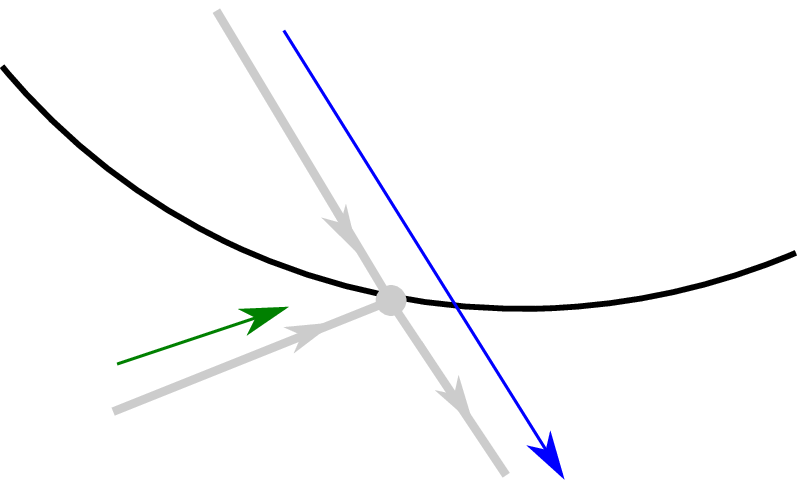}}%
        \put(3,2.5){\color[rgb]{0,0,0}\makebox(0,0)[lb]{\smash{$p_1$}}}%
        \put(1.3,1.3){\color[rgb]{0,0,0}\makebox(0,0)[lb]{\smash{$p_2$}}}%
        \put(9,2.4){\color[rgb]{0,0,0}\makebox(0,0)[lb]{\smash{$p_1$}}}%
        \put(7.3,1.3){\color[rgb]{0,0,0}\makebox(0,0)[lb]{\smash{$p_2$}}}%
    \end{pspicture}
\caption{A union graph (see Definition \ref{uniongraph}) is a union  of admissible  paths that satisfies the following two criteria.
    1. There can not be a vertex which is a start vertex for two paths or an end vertex for two paths (as is pictured on the left). 2. Each vertex in the union of these admissible paths has a well-defined direction through it (see Definition \ref{uniongraph}). An example of a violation of this criterion is pictured on the right and in Figure \ref{directionflow}.}
\label{conflict}
\end{figure}
\begin{figure}[htbp]%
\large
    \begin{pspicture}(0,0)(12,4)%
        \put(0,0){\includegraphics[width=.75\textwidth]{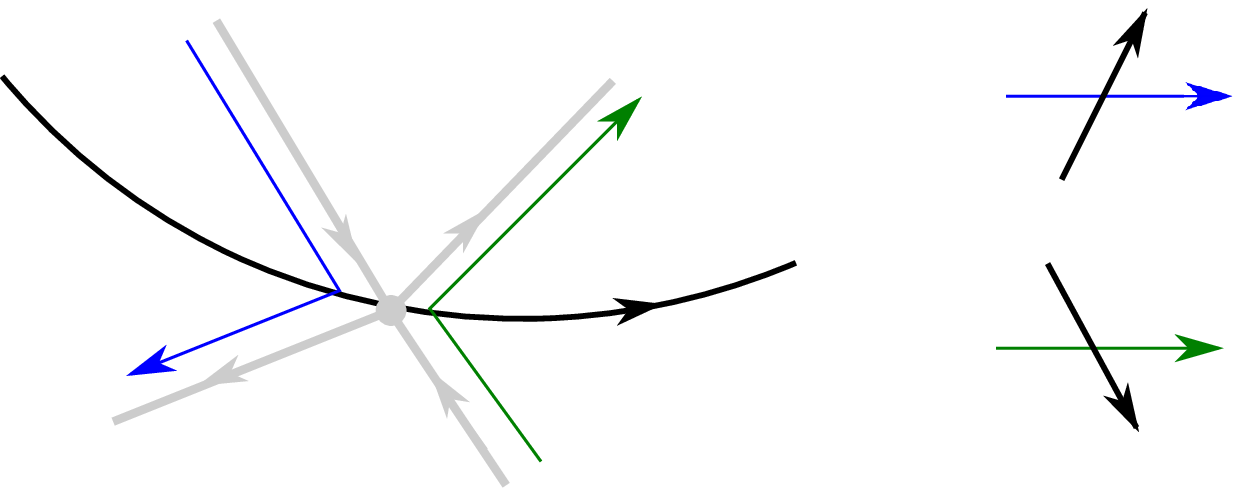}}%
        \put(1.2,1.2){\color[rgb]{0,0,0}\makebox(0,0)[lb]{\smash{$p_1$}}}%
    \put(4.7,2.5){\color[rgb]{0,0,0}\makebox(0,0)[lb]{\smash{$p_2$}}}%
    \put(8.75,0.3){\color[rgb]{0,0,0}\makebox(0,0)[lb]{\smash{$s_{k,j}$}}}%
    \put(8.9,2.6){\color[rgb]{0,0,0}\makebox(0,0)[lb]{\smash{$p_1$}}}%
    \put(5.8,1.2){\color[rgb]{0,0,0}\makebox(0,0)[lb]{\smash{$s_{k,j}$}}}%
    \put(9.3,2.9){\color[rgb]{0,0,0}\makebox(0,0)[lb]{\smash{left to right, $\widetilde{\tau} \in \mathbb{H}_+$}}}%
    \put(8.9,1.3){\color[rgb]{0,0,0}\makebox(0,0)[lb]{\smash{$p_2$}}}%
    \put(8.75,3.5){\color[rgb]{0,0,0}\makebox(0,0)[lb]{\smash{$s_{k,j}$}}}%
    \put(9.3,0.9){\color[rgb]{0,0,0}\makebox(0,0)[lb]{\smash{right to left, $\widetilde{\tau}  \in \mathbb{H}_-$}}}%
    \end{pspicture}
\caption{ If the vertices of the union graph do not have a well-defined direction through them, it corresponds to a single $\widetilde{\tau}$ taking on a value in $\mathbb{H}_+$ and $\mathbb{H}_-$ simultaneously, which is not possible. In the example above,  consider two paths $p_1$ and $p_2$ which have opposite orientation through the vertex corresponding to $s_{k,j}$. If $p_1$ is a path, then necessarily the $\widetilde{\tau} _{k,j} \in \mathbb{H}_+$. To see this, note that $p_1$ crosses from the left of $s_{k,j}$ to the right of $s_{k,j}$. Looking at this in rectifying coordinates (right), one can see that $\widetilde{\tau}_{k,j} \in \mathbb{H}_+$. Similarly, if $p_2$ were a path, $\widetilde{\tau} _{k,j} \in \mathbb{H}_-$, so clearly both $p_1$ and $p_2$ can not occur simultaneously.  }
\label{directionflow}
\end{figure}
\begin{proposition}
\label{nocyclesprop}
The union graph  contains neither directed nor undirected cycles.
\end{proposition}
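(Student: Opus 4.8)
The plan is to suppose a union graph contains a cycle $\Gamma$ — directed, or merely an undirected cycle of the underlying graph — and derive a contradiction in two steps: first a global (topological) step forcing $\Gamma$ into a single zone, then a local step inside that zone using the two defining properties of a union graph.

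\emph{Step 1.} Recall that an H-graph is embedded in the combinatorial disk model, that a homoclinic separatrix appears there as a chord joining two points of the boundary circle $\mathbb{S}^1$, and that each edge of the H-graph lies inside one connected component of the disk cut along these chords, so an edge meets no chord except possibly at its own endpoints. Hence $\Gamma$ is a simple closed curve in the open disk and bounds a closed topological disk $\overline{D}$ with $\overline{D}\cap\mathbb{S}^1=\emptyset$. If $v$ is a vertex of $\Gamma$ lying on the chord $c$, then $\Gamma$ meets $c$ only at $v$; were $\Gamma$ to cross $c$ transversally there, one of the two sub-arcs of $c$ issuing from $v$ would be trapped inside $\overline{D}$ while still having to reach its $\mathbb{S}^1$-endpoint, an impossibility. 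So $\Gamma$ bounces off $c$ at $v$, i.e.\ the two $\Gamma$-edges at $v$ lie on the same side of $c$ and hence in the same component; running this around $\Gamma$, \emph{all} its edges lie in one zone $Z$. Consequently each edge of $\Gamma$ is used by a distinct admissible path, since an admissible path uses at most one edge per component.

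\emph{Step 2.} Now work inside $Z$. By the well-defined-direction condition (Definition \ref{uniongraph}, second bullet), at every vertex $v$ of $\Gamma$ the union-graph edges lying in one of $v$'s two zones are all incoming and those in the other are all outgoing; both $\Gamma$-edges at $v$ lie in $Z$, so they are simultaneously incoming or simultaneously outgoing. Thus no vertex of $\Gamma$ has one incoming and one outgoing $\Gamma$-edge: this already excludes directed cycles, and for an undirected cycle it forces the vertices of $\Gamma$ to alternate between sources and sinks of $\Gamma$, so $\Gamma$ has an even number $2k$ of vertices and every edge of $\Gamma$ runs from a source to a sink. Since the H-graph is embedded, the $\Gamma$-edges are pairwise non-crossing arcs of $Z$. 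When $Z$ has a linearly ordered boundary (a half-plane, or a boundary component of a strip), an H-graph edge inside $Z$ runs from the lower to the higher of its endpoints in that order, so each source of $\Gamma$ sits lower than its two sink-neighbours; by the standard fact on non-crossing drawings of a cycle, the vertices of $\Gamma$ appear along the boundary in an order which, up to rotation and reflection, is the cyclic order along $\Gamma$, hence one that also alternates source/sink — and then the right-most vertex is either a source, whose two sink-neighbours cannot lie to its right, or a sink, in which case the next vertex to its left is a source with only one vertex to its right while it needs two sink-neighbours there; both possibilities are absurd. When $Z$ is a cylinder, whose boundary is a single cyclic component carrying H-graph edges in both directions, one uses instead the no-shared-endpoint condition (Definition \ref{uniongraph}, first bullet): the two incoming $\Gamma$-edges at a sink of $\Gamma$ belong to two distinct admissible paths which, if both ended at that sink, would produce two newly forming homoclinic separatrices sharing an incoming asymptotic direction — impossible — so each such path continues through the sink into its other zone, and chasing these continuations around $\Gamma$, together with the symmetric statement at the sources, yields the contradiction.

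The hard part is the cylinder case of Step 2: here there is no ``left-to-right'' monotonicity to exploit, so one must combine \emph{both} conditions of Definition \ref{uniongraph} with careful bookkeeping of how the admissible paths through the vertices of $\Gamma$ extend into the neighbouring zones. Making that argument watertight is the crux of the proof.
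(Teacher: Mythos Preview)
Your Step 1 is correct and is essentially the paper's Case I, recast as a clean Jordan--curve argument: the chord through a vertex meets $\Gamma$ only there, so it cannot separate the two $\Gamma$-edges at that vertex. The paper phrases this via the direction-flow condition (if one edge were incoming and one outgoing they would lie on opposite sides of the chord, and then the chord would have to cut $\Gamma$ a second time), but the content is the same.

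Your linear case in Step 2 is handled differently from the paper. The paper observes via Remark \ref{directionofedges} that a non-cylinder zone carries only edges oriented with the boundary H-chain, so a cycle with alternating sources and sinks cannot live there; you instead argue directly with the ``rightmost vertex'' trick. Both are valid, and yours has the virtue of not relying on that remark.

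The genuine gap is the cylinder case, which you yourself flag as the crux and leave unfinished. Two problems. First, your deduction ``so each such path continues through the sink'' overshoots: the no-shared-endpoint condition only forbids \emph{both} paths from terminating at the sink, so one may end there while the other continues, and the chasing argument you gesture at does not obviously close up. Second, even granting the continuations, you give no mechanism for producing a contradiction; the combinatorics of a cylinder genuinely allow edges in both directions, so a purely graph-theoretic chase is not clearly enough. The paper takes a completely different route here: it picks a three-edge segment of the cycle in which the first and third edges run contrary to the boundary H-chain, cuts the cylinder in rectifying coordinates at the start vertex of each contrary edge, and writes down the inequalities on the partial sums $\sum \Im(\widetilde{\tau}_i)$ that must hold for the corresponding new homoclinics to thread through the cylinder without exiting early. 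Comparing the two systems of inequalities yields an explicit numerical contradiction of the form $(>b)+(>c)\le b$. This analytic argument is the missing idea; your combinatorial outline does not supply it.
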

\begin{proof}
Suppose the union graph does contain a cycle.  Pick a vertex $v$ in the cycle and let $e_1$ and $e_2$ be the edges in the cycle that meet at $v$. \par
\begin{figure}[htbp]%
\large
    \begin{pspicture}(0,0)(12,3.5)%
        \put(1.3,0){\includegraphics[width=.8\textwidth]{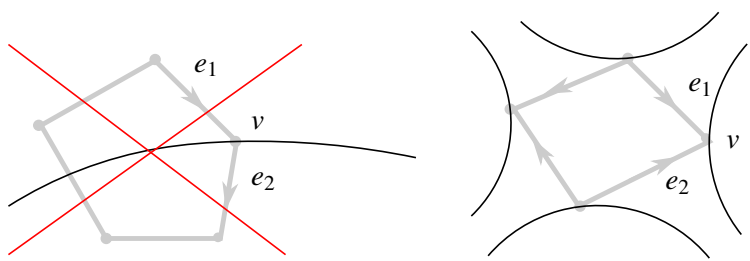}}%
    \put(4.5,1.75){\color[rgb]{0,0,0}\makebox(0,0)[lb]{\smash{$v$}}}%
    \put(10.75,1.5){\color[rgb]{0,0,0}\makebox(0,0)[lb]{\smash{$v$}}}%
    \put(3.75,2.5){\color[rgb]{0,0,0}\makebox(0,0)[lb]{\smash{$e_1$}}}%
    \put(4.5,1){\color[rgb]{0,0,0}\makebox(0,0)[lb]{\smash{$e_2$}}}%
    \put(10.25,2.25){\color[rgb]{0,0,0}\makebox(0,0)[lb]{\smash{$e_1$}}}%
    \put(9.95,0.95){\color[rgb]{0,0,0}\makebox(0,0)[lb]{\smash{$e_2$}}}%
    \end{pspicture}
   \caption{(Left) There can not be a cycle in the union graph that contains a vertex $v$ such that $e_1$ is incoming to $v$ and $e_2$ is outgoing from $v$. This would imply that $e_1$ and $e_2$ are on opposite sides of the homoclinic that goes through $v$, otherwise this would violate direction flow.  Since each homoclinic must traverse the entire disk, it must cut the cycle somewhere else besides at $v$.  We arrive at a contradiction since  a homoclinic can not cross two vertices, nor can it cross an edge in the H-graph. (Right) If the union graph had a cycle, it would have to have an even number of edges with alternating orientation and would have to be entirely contained in a single connected component.  }
   \label{CaseI}
\end{figure}
  \paragraph{Case I:} The cycle contains a vertex $v$ such that $e_1$ is incoming to $v$ and $e_2$ is outgoing from $v$ (see left of Figure \ref{CaseI}).  Now $e_1$ and $e_2$ must be on opposite sides of the homoclinic that goes through $v$, otherwise this would violate direction flow (the second condition in Definition \ref{uniongraph}).  Since each homoclinic must traverse the entire disk, it must cut the cycle somewhere else besides at $v$.  We arrive at a contradiction since  a homoclinic can not cross two vertices, nor can it cross an edge in the H-graph (see left of Figure \ref{CaseI}). Therefore, there can be no cycles in the union graph with such a vertex. This eliminates the possibility of a cycle having an odd number of edges or exactly two edges, which would necessarily have such a vertex. \par
  \paragraph{Case II:} For all vertices $v$ in the cycle, either both $e_1$ and $e_2$ are incoming to $v$ or both are outgoing from $v$ (since not in Case I). This implies there are an even number of edges in the cycle, and the orientation of the edges is alternating (see right of Figure \ref{CaseI}). Furthermore, any such cycle would have to be entirely contained in a single connected component of the disk minus the initial homoclinic configuration. Otherwise, a homoclinic would cross its interior, and this is impossible by the same argument as in Case I. The orientation of all homoclinics on the boundary of the connected component is the same (connected component to the left of all homoclinics, or the connected component is to the right of all homoclinics on the boundary). In fact, this connected component must be a cylinder zone, since the cycle has alternating orientation of edges and therefore must contain edges contrary to the orientation of the H-chain on the boundary (see Remark \ref{directionofedges}). We will use two edges in the cycle which have 'opposite' orientation to the orientation of the H-chain on the boundary to arrive at a contradiction. There must exist at least two such edges in every cycle with an even number of edges greater than 2 or we would be in Case I (see Figure \ref{nocycles}).
  \begin{figure}[htbp]%
\large
    \begin{pspicture}(0,0)(12,6.2)%
        \put(1.5,0){\includegraphics[width=.7\textwidth]{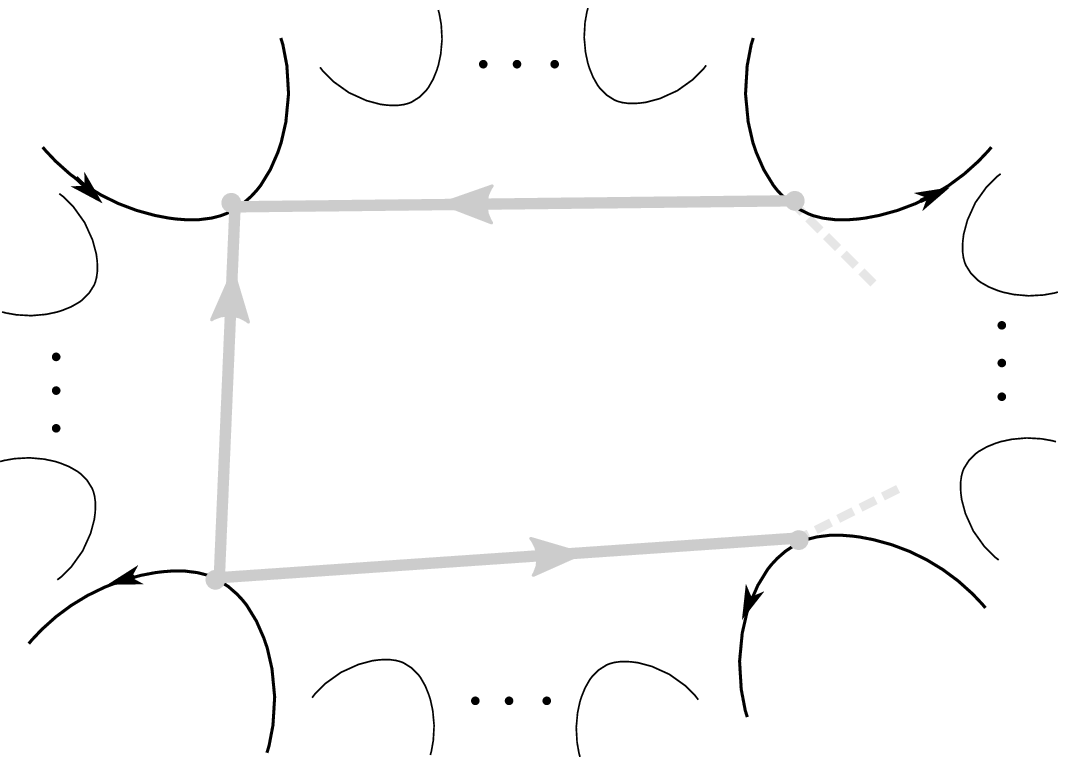}}%
        \put(9.5,2.2){\color[rgb]{0,0,0}\makebox(0,0)[lb]{\smash{$s_N$}}}%
    \put(8,1.2){\color[rgb]{0,0,0}\makebox(0,0)[lb]{\smash{$s_1$}}}%
    \put(5,1){\color[rgb]{0,0,0}\makebox(0,0)[lb]{\smash{\text{$m$ between}}}}%
    \put(2.1,3){\color[rgb]{0,0,0}\makebox(0,0)[lb]{\smash{\text{$n$ between}}}}%
    \put(5,5){\color[rgb]{0,0,0}\makebox(0,0)[lb]{\smash{\text{$\ell$ between}}}}%
    \put(2.5,1){\color[rgb]{0,0,0}\makebox(0,0)[lb]{\smash{$s_{m+1}$}}}%
    \put(2.5,5){\color[rgb]{0,0,0}\makebox(0,0)[lb]{\smash{$s_{m+n+1}$}}}%
    \put(8,5){\color[rgb]{0,0,0}\makebox(0,0)[lb]{\smash{$s_{m+n+\ell+1}$}}}%
  \end{pspicture}
   \caption{If the union graph contained a cycle, then it would be entirely contained in a single connected component of the disk model and the orientation of the edges would be alternating. Any such cycle contains a segment of three edges in the H-graph, where the first and third are contrary to the orientation of the H-chain on the boundary and the second has the same orientation as the H-chain on the boundary. }
   \label{nocycles}
\end{figure}
  In particular, any such cycle contains a segment of three edges in the H-graph, where the first and third are contrary to the orientation of the H-chain on the boundary and the second has the same orientation as the H-chain on the boundary (see Figure \ref{nocycles}). We will focus on the two contrary edges in such a segment. Each of those two edges corresponds to two different new homoclinics that form, since no single path in the H-graph  can contain two edges from the same connected component (see Definition \ref{admissiblepath}).
  Pick one edge that has opposite orientation to the H-chain (e.g. $s_{m+1} \rightarrow s_1$ in Figures \ref{nocycles} and \ref{nocycles2}), and cut the corresponding cylinder in rectifying coordinates such that the first  homoclinic in the H-chain is the starting  homoclinic (vertex) for that edge in the H-graph (see top of Figure \ref{nocycles2} where the cut is made at $s_{m+1}$).
  \begin{figure}[htbp]%
\large
    \begin{pspicture}(0,0)(12,6)%
        \put(1.3,0){\includegraphics[width=.8\textwidth]{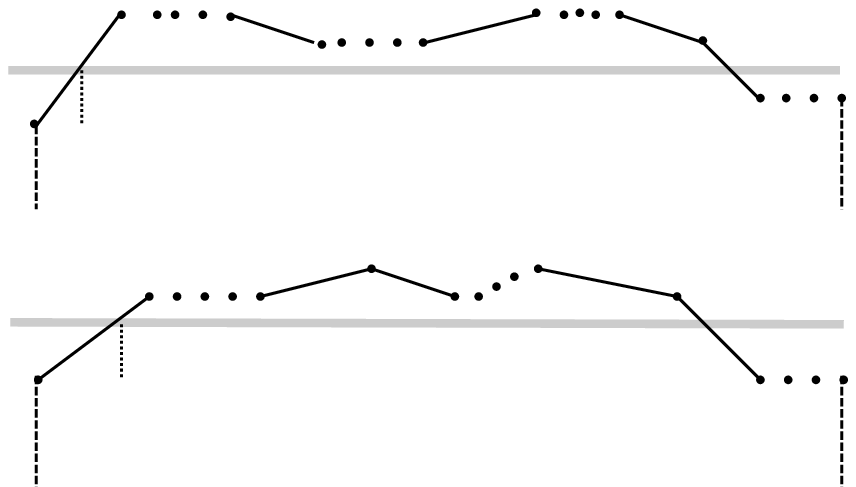}}%
        \put(2.3,4.4){\color[rgb]{0,0,0}\makebox(0,0)[lb]{\smash{$b$}}}%
        \put(2.7,1.4){\color[rgb]{0,0,0}\makebox(0,0)[lb]{\smash{$c$}}}%
        \put(1.6,5.3){\color[rgb]{0,0,0}\makebox(0,0)[lb]{\smash{$s_{m+1}$}}}%
    \put(4.3,5.4){\color[rgb]{0,0,0}\makebox(0,0)[lb]{\smash{$s_{m+n+1}$}}}%
    \put(6.2,5.6){\color[rgb]{0,0,0}\makebox(0,0)[lb]{\smash{$s_{m+n+\ell+1}$}}}%
    \put(8.7,5.5){\color[rgb]{0,0,0}\makebox(0,0)[lb]{\smash{$s_N$}}}%
    \put(9.55,5.05){\color[rgb]{0,0,0}\makebox(0,0)[lb]{\smash{$s_1$}}}%
    \put(0.6,1.6){\color[rgb]{0,0,0}\makebox(0,0)[lb]{\smash{$s_{m+n+\ell+1}$}}}%
    \put(4.5,2.45){\color[rgb]{0,0,0}\makebox(0,0)[lb]{\smash{$s_N$}}}%
    \put(5.8,2.45){\color[rgb]{0,0,0}\makebox(0,0)[lb]{\smash{$s_1$}}}%
    \put(8,2.45){\color[rgb]{0,0,0}\makebox(0,0)[lb]{\smash{$s_{m+n}$}}}%
    \put(9.9,1.5){\color[rgb]{0,0,0}\makebox(0,0)[lb]{\smash{$s_{m+n+1}$}}}%
 \end{pspicture}
   \caption{If the union graph contained a cycle, the cycle must be contained in a single connected component which is a cylinder zone, and the cycle has edges which alternate in orientation (see Figure \ref{nocycles} which depicts a clockwise cylinder). Any such cycle contains a segment of three edges in the H-graph, where the first and third are contrary to the orientation of the H-chain on the boundary and the second has the same orientation as the H-chain on the boundary (see Figure \ref{nocycles}). For each of the two edges that have opposite orientation to the H-chain, cut the  cylinder in rectifying coordinates such that the first  homoclinic in the H-chain is the starting  homoclinic for that edge in the H-graph.
   Above is shown two different cuts in rectifying coordinates of the same cylinder, seen as lower half-infinite strips with identified dashed vertical lines. The cuts are made at $s_{m+1}$ (top) and $s_{m+n+\ell+1}$ (bottom), corresponding to the start vertices of the contrary directed edges $s_{m+1} \rightarrow s_1$ and $s_{m+n+\ell+1} \rightarrow s_{m+n+1}$ in Figure \ref{nocycles}.
     In order for the new homoclinic to pass through the first and last homoclinic separatrices corresponding to these directed edges in the union graph, the ordered partial sums from left to right of the analytic invariants for the H-chain need to satisfy a set of inequalities.  These two sets of inequalities lead to a contradiction. }
   \label{nocycles2}
\end{figure}
  If this edge ($s_{m+1} \rightarrow s_1$) is in the union graph, it corresponds to a new homoclinic (path in H-graph) which enters the cylinder at $s_{m+1}$ and leaves the cylinder at $s_1$ (possibly beginning at $s_{m+1}$ and/or stopping at $s_1$). In order for the new homoclinic (path in H-graph which contains the edge $s_{m+1} \rightarrow s_1$) to pass through $s_{m+1}$ and $s_1$ without leaving the cylinder in between, the ordered partial sums from left to right of the analytic invariants for the H-chain need to all be greater than $b \geq 0$, the height of the new homoclinic relative to the first vertex in the cut cylinder ($b$ would be determined by the sums of the other analytic invariants in earlier parts of the chain), and the final partial sum needs to be less than or equal to $b$ (see the top of Figure \ref{nocycles2}). The same condition must apply to the other  opposite oriented edge in the cycle segment.  This will lead to a contradiction. Let $s_{m+1}$ and $s_1$ be the corresponding start and end homoclinics for an opposite oriented edge in the H-graph (see Figure \ref{nocycles}).
    Let $a_{m+1},\dots, a_1$ be the imaginary parts of the corresponding analytic invariants after perturbation. For the directed edge $s_{m+1} \rightarrow s_1$ to be realized, we need
  \begin{align}\label{edge1}
    a_{m+1} & >b \nonumber \\
    a_{m+1}+a_{m+2} &  >b \nonumber \\
    \dots & \nonumber \\
    a_{m+1}+a_{m+2}+\dots +a_{m+n+1} &  >b \nonumber \\
    \dots & \nonumber \\
    a_{m+1}+a_{m+2}+\dots+a_{m+n+1}+\dots +a_{m+n+\ell+1} &  >b \nonumber \\
    \dots & \nonumber \\
    a_{m+1}+a_{m+2}+\dots+a_{m+n+1}+\dots +a_{m+n+\ell+1}+ \dots +a_N &  >b \nonumber \\
    a_{m+1}+a_{m+2}+\dots+a_{m+n+1}+\dots +a_{m+n+\ell+1}+ \dots +a_N +a_1&  \leq b. \nonumber
  \end{align}
  Now consider the other opposite oriented edge in the cycle segment with starting and ending homoclinics $s_{m+n+\ell+1}$ and $s_{m+n+1}$ respectively (see Figure \ref{nocycles}). Let $c \geq 0$ be the height of the new homoclinic relative to the first vertex in this cut of the cylinder. Than for directed edge $s_{m+n+\ell+1}\rightarrow s_{m+n+1}$ to be realized, we need
   \begin{align}
    a_{m+n+\ell+1} & >c \nonumber \\
    \dots & \nonumber \\
    a_{m+n+\ell+1}+\dots +a_N+a_1 & >c \nonumber \\
    \dots & \nonumber\\
    a_{m+n+\ell+1}+\dots +a_N+a_1+ \dots +a_{m+n} & >c \nonumber \\
    a_{m+n+\ell+1}+\dots +a_N+a_1+ \dots +a_{m+n}+a_{m+n+1} & \leq c.\nonumber
  \end{align}
  By the first set of inequalities that $a_{m+1} + \cdots + a_{m+n+\ell}>b$, and by the second set of inequalities, $a_{m+n+\ell+1}+\cdots +a_{N}+a_1>c$. Then the last inequality in the first set  becomes $(>b) + (>c) \leq b$, a contradiction.
\end{proof}
\begin{proof}[of Theorem \ref{rank1characterization}]
  Assume throughout the rest of the proof that only rank 1,  multiplicity-preserving bifurcations are considered.\par
In order for a rank 1 bifurcation to occur, there must be  $n$ admissible (simultaneous) \emph{paths} that altogether use $n+1$ vertices.
The union graph for rank 1 bifurcations must be connected, since any two (or more) disjoint paths would require at least 4 vertices, corresponding to rank 2 or more. The union graph is therefore a tree since it has no cycles by Proposition \ref{nocyclesprop}. The union graph for rank 1 bifurcations must actually be a path itself (only two leaves)  by the following (see Figure \ref{rank1union}). Each homoclinic separatrix has two asymptotic directions: one outgoing from infinity (odd), and one incoming to infinity (even). It follows that if $n+1$ homoclinics break, and $n$ are to form,  only one odd and one even asymptotic direction can be lost in total. There can only be two leaves since each leaf of the union tree corresponds to a lost asymptotic direction. Furthermore, all the vertices besides the two leaves in the union of paths must be both the end of a path and the beginning of a (different) path.
  \begin{figure}[htbp]%
\large
    \begin{pspicture}(0,0)(12,5.5)%
        \put(0.8,0){\includegraphics[width=.85\textwidth]{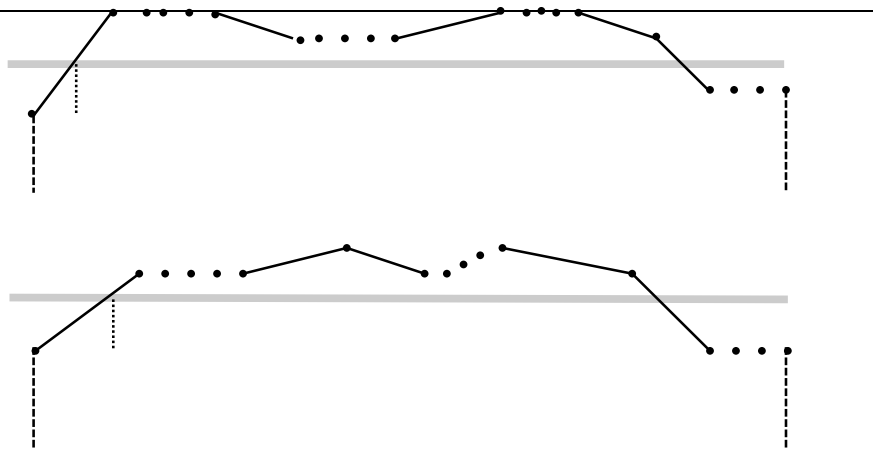}}%
 \end{pspicture}
   \caption{(Left) The union graph for rank 1, multiplicity-preserving bifurcations must itself be a directed path, consisting of $n+1$ vertices and $n$ paths of length 1. (Right) Such a union graph corresponds to the bifurcations as in the statement of Theorem \ref{rank1characterization}.}
   \label{rank1union}
\end{figure}
Since each vertex has a well-defined direction through it, the union path has an orientation. Choose the leaf that is the tail of a directed edge -- it is the beginning of a path. This path can only consist of one edge, since the second vertex must have a path end there and it can not come from the other direction because of the orientation of the union path. Unless the second vertex is also the last vertex of the union path, a second path must start at the second vertex, and end at the third vertex for the same reason.  The same argument must hold for subsequent vertices, so all paths must be of length 1. This corresponds exactly to the type of bifurcation as described in the statement of the theorem.
\end{proof}
\begin{acknowledgements}
There are many people to thank for patiently listening to various stages of this work and providing helpful suggestions. In particular, the author is indebted to Bodil Branner, Frederick Gardiner and the other participants in the Extremal Length seminar at CUNY Graduate Center, Christian Henriksen, Poul G. Hjorth, Louis Pedersen, and Tan Lei.  The author would also like to express gratitude to the anonymous referee.
\end{acknowledgements}
 \section*{Conflict of interest}
 The authors declare that they have no conflict of interest.
\bibliographystyle{spmpsci}      
\bibliography{MultPresBifs}   
\end{document}